\documentclass{amsart}
\usepackage{a4}

\usepackage{hyperref}
\usepackage[italian, english]{babel}

\usepackage{mathtools} 

\theoremstyle{definition}
\newtheorem{definition}{Definition}[section]
\newtheorem{theorem}[definition]{Theorem}

\newtheorem{proposition}[definition]{Proposition}
\newtheorem{corollary}[definition]{Corollary}
\newtheorem{remark}[definition]{Remark}

\newtheorem*{assumption}{Assumption}

\usepackage{xcolor} 
\usepackage[font={small,it}]{caption}

\usepackage{tikz-cd} 
\usepackage{enumerate} 

\begin{document}

\title[Singularities and projective normality of EF 3-folds]{On the singularities and on the projective normality of some Enriques-Fano threefolds}
\author[V. Martello]{Vincenzo Martello}
\address{Dipartimento di Matematica, Universit\`{a} della Calabria, Arcavacata di Rende (CS)}
\email{vincenzomartello93@gmail.com}


\begin{abstract}
In order to find useful information to complete the classification of Enriques-Fano threefolds, we will computationally study the singularities of some known Enriques-Fano threefolds of genus 6, 7, 8, 9, 10, 13 and 17. We will also deduce the projective normality of these threefolds.
\end{abstract}

\maketitle

\section{Introduction}

An \textit{Enriques-Fano threefold} is a normal threefold $W$ endowed with a complete linear system $\mathcal{L}$ of ample Cartier divisors such that the general element $S\in \mathcal{L}$ is an Enriques surface and such that $W$ is not a \textit{generalized cone} over $S$, i.e., $W$ is not obtained by contraction of the negative section on the $\mathbb{P}^1$-bundle $\mathbb{P}(\mathcal{O}_{S}\oplus \mathcal{O}_{S}(S))$ over $S$. The linear system $\mathcal{L}$ defines a rational map $\phi_{\mathcal{L}} : W \dashrightarrow \mathbb{P}^{p}$, where $p:=\frac{S^3}{2}+1$ is called the \textit{genus} of $W$ and $2\le p\le 17$ (see \cite{KLM11} and \cite{Pro07}). 
If the elements of $\mathcal{L}$ are very ample divisors, then $W$ is embedded in $\mathbb{P}^p$ via $\phi_{\mathcal{L}}$ as a 
non-degenerate threefold whose general hyperplane section is an Enriques surface.
It is known that any Enriques-Fano threefold is singular with isolated canonical singularities (see \cite[Lemma 3.2]{CoMu85} and \cite{Ch96}). 
We will say that two distinct singular points of an Enriques-Fano threefold $W$ are \textit{associated} if the line joining them is contained in $W$.
The way in which the singular points of an Enriques-Fano threefold are associated is called \textit{configuration} and can be represented graphically: if two singular points are associated, one draws a segment joining them, otherwise not. In Appendix~\ref{app:congifuration} we will graphically describe the configurations that we will find in this paper.
\begin{definition}\label{def:similar}
The singular points of an Enriques-Fano threefold $W$ are said to be \textit{similar} if
they have the same multiplicity,
they have the same tangent cone,
and
there is an $m$ such that each singular point is associated with exactly $m$ other singular points.
\end{definition}
The first examples of Enriques-Fano threefolds were discovered by Fano (see \cite{Fa38}), under the assumption that their singularities were similar and that the blow-up at the singular points was sufficient to resolve them. 
It must be said that Fano's arguments contain many gaps, some of which have been solved by Conte and Murre (see \cite{CoMu85}). By using a sort of inverse of \cite[Theorem 7.2]{CoMu85}, Fano constructed four rational Enriques-Fano threefolds having eight similar 
singular points whose configurations are the ones in Table~\ref{tab:BS67913} of Appendix~\ref{app:congifuration}. They are: 
\begin{itemize}
\item[(i)] the Enriques-Fano threefold $W_{F}^{6}\subset \mathbb{P}^{6}$ of genus $6$ given by the image of 
the rational map defined by the linear system 
 of the septic surfaces with double points along three twisted cubics having five points in common (see \cite[\S 3]{Fa38});
\item[(ii)] the Enriques-Fano threefold $W_{F}^{7}\subset \mathbb{P}^{7}$ of genus $7$ given by the image of 
the rational map defined by the linear system 
of the sextic surfaces having double points along the six edges of a tetrahedron and containing a plane cubic curve intersecting each edge at one point (see \cite[\S 4]{Fa38});
\item[(iii)] the Enriques-Fano threefold $W_{F}^{9}\subset \mathbb{P}^{9}$ of genus $9$ given by the image of 
the rational map defined by the linear system 
of the septic surfaces having double points along the six edges of two trihedra (see \cite[\S 7]{Fa38});
\item[(iv)] the Enriques-Fano threefold $W_{F}^{13}\subset \mathbb{P}^{13}$ of genus $13$ given by the image of 
the rational map defined by the linear system 
of the sextic surfaces having double points along the six edges of a tetrahedron (see \cite[\S 8]{Fa38}).
\end{itemize}
Fano also found an ``exceptional'' example of genus $4$, which is a sextic hypersurface $W_F^4$ of $\mathbb{P}^4$ whose general hyperplane section is a sextic surface of $\mathbb{P}^3$ having double points along the six edges of a tetrahedron (see \cite[\S 10]{Fa38}). The threefold $W_F^4$ has been proved to be \textit{non-rational} (see \cite{P-BV83}).
Furthermore, as noted by Conte in \cite[p. 225]{Co82}, there is also another exceptional example of genus $3$, which is the threefold $W_F^3$ given by a quadruple $\mathbb{P}^3$ (see \cite[\S 2]{Fa38}).
We will refer to the above threefolds $W_F^{p=3,4,6,7,9,13}$ as \textit{F-EF 3-folds}.
However, Fano's classification is incomplete: indeed, it fails to include some other Enriques-Fano threefolds which have been discovered.
Under the assumption that the singularities are terminal cyclic quotients, Enriques-Fano threefolds were classified by Bayle (see \cite{Ba94}) and, in a similar and independent way, by Sano (see \cite{Sa95}). We will refer to such Enriques-Fano threefolds as \textit{BS-EF 3-folds}: they are fourteen and they have genus $2\le p \le 10$ or $p=13$.
Six of them are endowed with a linear system of very ample divisors (see \cite[Theorem A]{Ba94}): they have genus $p=6,7,8,9,10,13$ and we will denote them, respectively, by $W_{BS}^{6}$, $W_{BS}^{7}$, $W_{BS}^{8}$, $W_{BS}^{9}$, $W_{BS}^{10}$, $W_{BS}^{13}$. 
We recall that a fixed BS-EF 3-fold 
$W$ 
is the quotient of a smooth Fano threefold 
$X$ 
via an involution 
$\sigma$ 
having $8$ fixed points, and that 
$W$ 
has $8$ quadruple points whose tangent cone is a cone over a Veronese surface.
More generally, an Enriques-Fano threefold with only terminal singularities is a limit of some BS-EF 3-fold (see \cite[Main Theorem 2]{Mi99}). 
Instead, only a few examples of Enriques-Fano threefolds with non-terminal canonical singularities are known: two of genus $p=13, 17$ found by Prokhorov (see \cite[Proposition 3.2, Remark 3.3]{Pro07}) and one of genus $p=9$ found by Knutsen, Lopez and Mu\~{n}oz (see \cite[\S 13]{KLM11}).
The Enriques-Fano threefolds of genus $13$ and $17$ found by Prokhorov (shortly, \textit{P-EF 3-folds}) are obtained as the quotient of singular Fano threefolds $V$ via an involution $\tau$ having five fixed points: we will denote them by $W_P^{13}$ and $W_{P}^{17}$, respectively.
The Enriques-Fano threefold found by Knutsen-Lopez-Mu\~{n}oz (shortly, \textit{KLM-EF 3-fold}) is a threefold $W_{KLM}^9\subset \mathbb{P}^9$ given by the projection of the F-EF 3-fold $W_F^{13}\subset \mathbb{P}^{13}$ from the $\mathbb{P}^3$ spanned by a certain elliptic quartic curve $E_3\subset W_F^{13}$. 

In order to find useful information to complete the classification of Enriques-Fano threefolds, we will study the threefolds $W_{BS}^{p=6,7,8,9,10,13}$, $W_P^{p=13,17}$, $W_{KLM}^9$. We will do this from a computational point of view, with the help of the software Macaulay2. In Appendix~\ref{app:code} we will collect the input codes used in Macaulay2.

Since the rational F-EF 3-folds $W_F^{p=6,7,9,13}$ have eight quadruple points whose tangent cone is a cone over a Veronese surface (see \cite[p. 44]{Fa38}), then they only have terminal singularities (see \cite[Example 1.3]{Re87}) and therefore they are limits of the BS-EF 3-folds $W_{BS}^{p=6,7,9,13}$ (see \cite[Main Theorem 2]{Mi99}). 
We can say something more about this link between F-EF 3-folds and BS-EF 3-folds. Indeed, in \S~\ref{subsec:bayle6},~\ref{subsec:bayle7},~\ref{subsec:bayle9},~\ref{subsec:bayle13} we will prove the following results.

\begin{theorem}\label{thm:BS67913}
Let $p\in \{7,13\}$. Then the eight quadruple points of the BS-EF 3-fold $W_{BS}^p \hookrightarrow \mathbb{P}^p$ are similar and they have the same configuration as the ones of the F-EF 3-fold $W_{F}^p \subset \mathbb{P}^p$. Furthermore, we have the following facts
\begin{itemize}
\item[(i)] the ideal of $W_{BS}^6\subset \mathbb{P}^6$ is generated by cubics;
\item[(ii)] the ideal of $W_{BS}^7\subset \mathbb{P}^7$ is generated by quadrics and cubics;
\item[(iii)] the ideal of $W_{BS}^9\subset \mathbb{P}^9$ is generated by quadrics;
\item[(iv)] the ideal of $W_{BS}^{13}\subset \mathbb{P}^{13}$ is generated by quadrics.
\end{itemize}
\end{theorem}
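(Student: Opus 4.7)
The argument is a direct Macaulay2 computation, and the author has clearly organized it section-by-section (see \S\ref{subsec:bayle6}, \S\ref{subsec:bayle7}, \S\ref{subsec:bayle9}, \S\ref{subsec:bayle13}) with code in Appendix~\ref{app:code}. For each $p\in\{6,7,9,13\}$ my plan is as follows.

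First, I would produce an explicit embedding $W_{BS}^p\hookrightarrow\mathbb{P}^p$ amenable to symbolic computation. The Bayle--Sano construction exhibits $W_{BS}^p$ as the quotient $X/\sigma$ of a smooth Fano threefold by an involution with $8$ isolated fixed points, the very ample linear system on $W_{BS}^p$ being induced by the $\sigma$-invariant part of an ample system on $X$. Concretely, I would choose coordinates on $X$ in which $\sigma$ acts diagonally, compute a basis of invariant sections of the chosen line bundle, and use these sections as the homogeneous coordinates on $\mathbb{P}^p$. The ideal $I(W_{BS}^p)\subset k[x_0,\dots,x_p]$ is then recovered in Macaulay2 as the kernel of the ring map sending each $x_i$ to the corresponding invariant.

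Second, once $I(W_{BS}^p)$ is known, statements (i)--(iv) reduce to asking Macaulay2 for a minimal set of generators and reading off their degrees; this is essentially one command and costs nothing beyond the cost of presenting $I(W_{BS}^p)$.

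Third, for the similarity and configuration claim at $p\in\{7,13\}$, I would locate the eight singular points as the images of $\mathrm{Fix}(\sigma)\subset X$ under the quotient map. For each such point I translate it to the origin, take the initial ideal of $I(W_{BS}^p)$, verify that its lowest nonzero degree is $4$ (so the multiplicity is $4$), and check that this tangent cone defines a cone over the Veronese surface in $\mathbb{P}^5$, for instance by exhibiting an explicit linear change of variables sending it to the standard Veronese ideal (or by matching Hilbert function and degree). Since the eight singular points are a single orbit under the group of symmetries of $X$ that normalize $\sigma$, the multiplicity, the tangent cone and the number $m$ of associated points are the same at each of them, which yields similarity in the sense of Definition~\ref{def:similar}. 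To determine the configuration I would, for each of the $\binom{8}{2}$ unordered pairs $\{P,Q\}$, parametrize the line $\overline{PQ}$ and test in Macaulay2 whether its defining parametric ideal is contained in $I(W_{BS}^p)$; the resulting incidence pattern is then matched against the tables in Appendix~\ref{app:congifuration} for $W_F^p$.

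The main obstacle is the very first step: producing a presentation of $W_{BS}^p$ that is clean enough for the subsequent queries to terminate, especially for $p=13$, where the ambient $\mathbb{P}^{13}$ makes both the invariant computation and the pairwise line tests potentially heavy. A careful choice of coordinates on $X$ (exploiting the $\sigma$-eigenspace decomposition) and of an efficient basis of invariants is what turns the whole theorem into a sequence of routine Macaulay2 calls.
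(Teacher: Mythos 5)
Your proposal is correct and follows essentially the same route as the paper: an explicit embedding of $W_{BS}^p$ by the $\sigma$-invariant sections, a Macaulay2 computation of the ideal (whose generator degrees give (i)--(iv)), tangent-cone computations at the images of the fixed points, and a pairwise test of which lines $\overline{P_iP_j}$ lie on $W_{BS}^p$, matched against the configurations of $W_F^p$ in Appendix~\ref{app:congifuration}. The only minor deviation is your appeal to a symmetry group acting transitively on the eight singular points to get similarity in one stroke; the paper instead checks multiplicity and tangent cone at each point directly (and leans on the known Bayle--Sano fact that all eight are quadruple points with Veronese tangent cone), which is safer since for general coefficients such a normalizing symmetry group need not act transitively.
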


In \S~\ref{subsec:bayle6},~\ref{subsec:bayle9} we will see that the eight quadruple points of particular examples of BS-EF 3-folds of genus $6$ (respectively, genus $9$) are also similar and they have the same configuration as the ones of the F-EF 3-fold of genus $6$ (respectively, genus $9$). It would be interesting to show it for general $W_{BS}^6$ and $W_{BS}^9$.


\begin{theorem}\label{thm:B13=F13}
The embedding of the BS-EF 3-fold of genus $13$ in $\mathbb{P}^{13}$ is a F-EF 3-fold of genus $13$.
\end{theorem}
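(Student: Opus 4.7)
The plan is to exhibit the given embedding $W_{BS}^{13} \hookrightarrow \mathbb{P}^{13}$ as the image of the rational map on $\mathbb{P}^{3}$ defined by the linear system of sextic surfaces having double points along the six edges of a tetrahedron, so that it coincides, up to a projective transformation, with the Fano model $W_{F}^{13}$.

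First, I would use Theorem \ref{thm:BS67913} and the Macaulay2 computations of \S\ref{subsec:bayle13} to pin down explicit coordinates for the $8$ quadruple points of $W_{BS}^{13}$ and to confirm that their incidence pattern is the one recorded in Table \ref{tab:BS67913}. By that table, the $8$ points split into two tetrahedra whose $6+6$ edges correspond to the segments drawn in the configuration graph, exactly as happens for $W_{F}^{13}$.

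Second, I would fix one such tetrahedron $T\subset W_{BS}^{13}$ and choose coordinates on $\mathbb{P}^{3}$ sending $T$ to the coordinate tetrahedron. I would then consider the complete linear system $\Lambda$ of sextic surfaces of $\mathbb{P}^{3}$ singular along the six edges of the coordinate tetrahedron. A dimension count gives $\dim\Lambda=13$, so $\Lambda$ defines a rational map $\phi_{\Lambda}\colon\mathbb{P}^{3}\dashrightarrow\mathbb{P}^{13}$ whose image is by construction the Fano threefold $W_{F}^{13}$.

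Third, I would check in Macaulay2 that the ideal of $\phi_{\Lambda}(\mathbb{P}^{3})$ agrees with the ideal of $W_{BS}^{13}$ up to a projective change of coordinates in $\mathbb{P}^{13}$. Since by Theorem \ref{thm:BS67913}(iv) the ideal of $W_{BS}^{13}$ is generated by quadrics, the comparison reduces to matching two explicit finite-dimensional spaces of quadrics, a task that can be turned into a linear-algebra problem in the coefficients of a $14\times 14$ matrix.

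The hard part is exactly this last step: both threefolds are defined only up to projectivity of $\mathbb{P}^{13}$, so what one really has to produce is an explicit (or at least existence-certified) linear change of coordinates intertwining the two embeddings. If the direct comparison is unwieldy, a cleaner alternative is to invoke a converse of \cite[Theorem 7.2]{CoMu85} in the spirit of Fano's original construction: once one knows that $W_{BS}^{13}$ has $8$ similar quadruple points with Veronese tangent cones arranged in the tetrahedral configuration and that its ideal is generated by quadrics, that converse pins down the threefold inside Fano's family and forces the identification $W_{BS}^{13}=W_{F}^{13}$.
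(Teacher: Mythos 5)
Your overall strategy---realize $W_{BS}^{13}$ as the image of the linear system of sextics double along the edges of a tetrahedron---is the same as the paper's, but the step where the proof actually lives is left unexecuted, and your two proposed ways of closing it do not work as stated. The fallback on ``a converse of \cite[Theorem 7.2]{CoMu85}'' is not available: the paper only says Fano used ``a sort of inverse'' of that theorem heuristically, and no precise statement exists (in the paper or in the cited literature) asserting that eight similar quadruple points with Veronese tangent cones in the tetrahedral configuration, plus an ideal generated by quadrics, force a threefold to lie in Fano's genus-$13$ family. Your primary route---searching for a $14\times 14$ projectivity matching the two spaces of quadrics---is in principle sound but is precisely the content of the theorem, and you do not carry it out. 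There is also a conceptual slip in your second step: the ``tetrahedron'' formed by four of the eight singular points lives in $\mathbb{P}^{13}$, whereas the tetrahedron of the Fano construction is the base locus of a linear system in $\mathbb{P}^{3}$; there is no canonical way to ``send'' the former to the latter without first producing a birational map $W_{BS}^{13}\dashrightarrow\mathbb{P}^3$ (as the paper does for genus $9$ by projecting from the span of six singular points), which you do not construct.

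The paper's proof avoids the coordinate-matching problem entirely. It writes down the explicit basis of $\sigma$-invariant multidegree-$(2,2,2)$ monomials defining $\pi:(\mathbb{P}^1)^3\to W_{BS}^{13}$ and the explicit basis $w_0=t_0t_1^3t_2t_3,\dots,w_{13}=t_0^2t_2^2t_3^2$ of the sextic system taken from \cite[p.~635]{GH}, in a compatible ordering; Macaulay2 then verifies that the two ideals in $\mathbb{P}^{13}$ are \emph{literally equal} (\texttt{WF13 == WB13} in Code~\ref{code:bayle13}), so no projective transformation needs to be found at all. The underlying reason is that the quotient map of $(\mathbb{P}^1)^3$ by $\sigma$ factors through the standard birational map $(\mathbb{P}^1)^3\dashrightarrow\mathbb{P}^3$, under which the invariant $(2,2,2)$-forms correspond exactly to the sextics double along the edges of the coordinate tetrahedron. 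If you want to repair your argument, either perform this direct identification with explicitly chosen bases, or actually exhibit the birational projection $W_{BS}^{13}\dashrightarrow\mathbb{P}^3$ and compute its inverse.
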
 

In \S~\ref{subsec:bayle9} we will also prove that particular BS-EF 3-folds of genus $9$ are isomorphic to F-EF 3-folds of genus $9$. It would be interesting to show it for general $W_{BS}^9$.

We have other links between F-EF 3-folds and BS-EF 3-folds. The BS-EF 3-fold of genus $4$ described in \cite[\S 6.33]{Ba94} (see also \cite[Theorem 1.1 No.5]{Sa95}) is endowed with a linear system defining a birational map onto the image, which is the F-EF 3-fold $W_F^4\subset \mathbb{P}^4$.
The BS-EF 3-fold of genus $3$ described in \cite[\S 6.1.5]{Ba94} (see also \cite[Theorem 1.1 No.2]{Sa95}) is endowed with a linear system defining a quadruple cover over $\mathbb{P}^3$, so it looks like the F-EF 3-fold $W_F^3$. This suggests that one could obtain the BS-EF 3-folds with ample (but not very ample) hyperplane sections by resuming the Fano-Conte-Murre techniques and by undermining some assumptions. Thus, re-examining the brilliant ideas of Fano with the techniques of Conte and Murre would be very interesting, since new Enriques-Fano threefolds could be found, even if this problem seems not to have been studied yet.
However, one must be careful of mistakes in resuming Fano's paper. For example, the BS-EF 3-folds $W_{BS}^{8}$ and $W_{BS}^{10}$ do not appear in the description of Fano, although they behave like the other BS-EF 3-folds with very ample hyperplane sections. It is a situation that should be understood better. The problem could be the fact that Fano stated that if the genus is greater than $7$, then there are no three mutually associated singular points (see \cite[\S 5]{Fa38}). Indeed, we will see that the singularities of both threefolds $W_{BS}^8$ and $W_{BS}^{10}$ have configurations which are in contradiction with this assertion. In \S~\ref{subsec:bayle8},~\ref{subsec:bayle10} we will prove the following result.

\begin{theorem}
Let $p\in \{8,10\}$. The ideal of $W_{BS}^p\subset \mathbb{P}^p$ is generated by quadrics and cubics. Furthermore the eight singular points of $W_{BS}^{10}$ are similar and they have the configuration in Table~\ref{tab:BS810} of Appendix~\ref{tab:BS810}.
\end{theorem}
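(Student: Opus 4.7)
The plan is to follow the same computational template used for the previous theorems of the paper. The starting point is to realize each $W_{BS}^{p}$, for $p\in\{8,10\}$, as the quotient $X/\sigma$ of the appropriate smooth Fano threefold $X$ by an involution $\sigma$ with exactly eight isolated fixed points, as prescribed by Bayle's and Sano's classifications. Concretely, one picks coordinates in which $\sigma$ acts diagonally and writes down a set of $\sigma$-invariant generators of the projective coordinate ring of $X$ of the expected degrees; these invariants furnish an explicit rational map $X\dashrightarrow\mathbb{P}^{p}$ whose image is $W_{BS}^{p}$. All of this is implemented as Macaulay2 input in Appendix~\ref{app:code}.

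First I would compute the ideal $I(W_{BS}^{p})\subset k[x_0,\ldots,x_p]$ by eliminating the coordinates of $X$ from the ideal generated by $y_i - f_i(x)$, where the $f_i$ are the chosen invariants. Then I would ask Macaulay2 for a minimal set of generators of $I(W_{BS}^{p})$ and read off their degrees; the claim to check is that all generators live in degree $2$ or $3$. This also certifies that $W_{BS}^{p}$ is projectively the threefold we want, since the Hilbert polynomial (degree, sectional genus, arithmetic genus) can be read off the resulting free resolution and compared with the expected invariants of an Enriques-Fano threefold of genus $p$.

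For the singularity analysis on $W_{BS}^{10}$, I would locate the eight singular points as the images of the fixed locus of $\sigma$ in $X$, which is already known to consist of eight points. At each such point $P_k$ I would translate $P_k$ to the origin, expand the defining equations and extract the lowest-degree part to obtain the tangent cone; I expect to find multiplicity $4$ and the tangent cone equal to a cone over the Veronese surface in all eight cases, as for the other BS-EF $3$-folds, which gives the first two clauses of Definition~\ref{def:similar}. The configuration is then determined by the following loop over pairs: for each pair $(P_i,P_j)$ with $i<j$, parametrize the line $\ell_{ij}=\overline{P_iP_j}$ as $t\mapsto (1-t)P_i+tP_j$, substitute into a set of generators of $I(W_{BS}^{10})$ and test whether the resulting polynomials in $t$ are identically zero; $\ell_{ij}\subset W_{BS}^{10}$ if and only if they are. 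Counting for each $P_k$ the number of $P_j$'s joined to it by a line contained in $W_{BS}^{10}$, I would verify that this integer $m$ is independent of $k$, which yields the third clause of Definition~\ref{def:similar} and pins down the configuration to be compared with Table~\ref{tab:BS810}.

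The main obstacle is computational rather than conceptual: the elimination and resolution over $\mathbb{Q}$ in $\mathbb{P}^{10}$ can be heavy, so one typically specializes the moduli of $X$ and $\sigma$ to a sufficiently generic choice over a small prime field, does the computation there, and argues by semicontinuity that the degrees of a minimal generating set of $I(W_{BS}^{p})$ and the local analytic type of the singularities of $W_{BS}^{p}$ can only specialize from the generic case (so quadric-and-cubic generation and the stated tangent cones persist). A secondary subtlety is to make sure the chosen invariants really embed $W_{BS}^{p}$, rather than define a finite cover of it; this is checked by verifying that $X\to W_{BS}^{p}$ has degree $2$ on coordinate rings and that the image threefold has the predicted degree in $\mathbb{P}^{p}$.
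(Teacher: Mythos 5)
Your overall strategy (explicit quotient construction, Macaulay2 computation of the ideal, degree check on a minimal generating set, and a pairwise line-containment loop for the configuration) is exactly the paper's, and for $p=10$ your recipe can be carried out verbatim: there $X=\mathbb{P}^1\times S_6$ sits in $\mathbb{P}^1\times\mathbb{P}^6$, the involution is diagonal, and the quotient map is given by the invariant forms of multidegree $(2,1)$. The genuine gap is in the case $p=8$, where the setup you describe does not exist. The Fano threefold there is \emph{not} embedded with a linearized involution: $X$ is the blow-up of the cone $Y\subset\mathbb{P}^4$ over a quadric $Q\subset\{x_4=0\}$ at its vertex $v$ and along an elliptic quartic $C=Q\cap R$, the involution $\sigma$ only descends from a \emph{birational} involution $\sigma'$ of $Y$ (quadratic in the coordinates, $[x_0:\dots:x_4]\mapsto[x_4x_0:x_4x_1:-x_4x_2:-x_4x_3:R]$), and the quotient map is given by the $\sigma'$-invariant \emph{cubics} through $v$ and $C$, whose image lands in a hyperplane $H_8\subset\mathbb{P}^9$. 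Without identifying this linear system (and verifying that $X$ is Fano, which takes the paper a nontrivial adjunction/ampleness argument), the elimination you propose cannot even be set up; "pick coordinates in which $\sigma$ acts diagonally and write down invariant generators" has no meaning for this $X$.

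Two secondary points. First, for the similarity of the eight points of $W_{BS}^{10}$ the paper does not recompute tangent cones: the first two clauses of Definition~\ref{def:similar} are automatic because every BS-EF 3-fold has eight $\frac{1}{2}(1,1,1)$-points, i.e.\ quadruple points whose tangent cone is a cone over a Veronese surface; only the association count is checked by machine. Your direct tangent-cone computation would also work but is redundant. Second, your semicontinuity argument is not what the paper does and, as stated, is too loose: the paper fixes the parameters $q_{ij},r_{ij}$ (resp.\ works with the rigid genus-$10$ model), proves the statements for those examples, and is explicit that the similarity claim for a \emph{general} $W_{BS}^8$ remains open --- which is precisely why the theorem asserts similarity only for $p=10$. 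If you want to upgrade a single verified example to the generic member of the genus-$8$ family, you need to say what the flat family is and invoke upper semicontinuity of graded Betti numbers at constant Hilbert function for the generation-in-degrees-$\le 3$ claim; the "local analytic type of the singularities can only specialize from the generic case" direction is the wrong way around and would not give similarity for general $W_{BS}^8$ from one example.
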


In \S~\ref{subsec:bayle8} we will show that the eight singular points of particular examples of B-EF 3-folds of genus $8$ are also similar and they have the configuration in Table~\ref{tab:BS810} of Appendix~\ref{tab:BS810}. It would be interesting to show it for a general $W_{BS}^8$.

Furthermore, we will identify $W_{BS}^{8}$ and $W_{BS}^{10}$ as images of rational maps defined by linear systems of surfaces of $\mathbb{P}^3$, as happens for the rational F-EF 3-folds (see Theorem~\ref{thm:B8 linear system},~\ref{thm:B10linearsystem}). We will also study the singularities of the P-EF 3-folds and the KLM-EF 3-fold. It is known that these threefolds have canonical non-terminal singularities, but so far there was no information about their multiplicities and tangent cones. This analysis is very interesting because it provides some clues about the link between the non-terminality of the singularities, the non-similarity of the singularities and the fact that the blow-up at the singularities is not sufficient to resolve them. In \S~\ref{subsec:pro17},~\ref{subsec:pro13} we will prove the following result.

\begin{theorem}
Let $p\in \{13, 17\}$. Then the P-EF 3-fold of genus $p$ can be embedded in $\mathbb{P}^{p}$ and its ideal is generated by quadrics. Furthermore $W_P^{p}$ has five non-similar singular points whose configuration is the one in Table~\ref{tab:proKLM} of Appendix~\ref{app:congifuration}. Moreover, four of the these five points are quadruple points, whose tangent cone is a cone over a Veronese surface. If $p=13$, the last point is a quintuple point, whose tangent cone is a cone over the union of five planes; if $p=17$, it is a sextuple point, whose tangent cone is a cone over the union of four planes and a quadric surface.
\end{theorem}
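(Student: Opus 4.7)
The plan is to treat each case $p=13$ and $p=17$ separately but following the same Macaulay2-assisted recipe used earlier in the paper for the BS-EF 3-folds. I would begin from Prokhorov's explicit description of the singular Fano threefold $V$ together with the involution $\tau$ with five fixed points. Next I would write down an explicit basis of the $\tau$-invariant sections of the appropriate ample line bundle, which provides $p+1$ coordinates on the quotient; with Macaulay2, I would then compute the kernel of the induced ring homomorphism. The output gives a homogeneous ideal $I\subset k[x_0,\dots,x_p]$ whose zero locus $W_P^p\subset\mathbb{P}^p$ is the image. Checking the Hilbert polynomial and the $\mathcal{O}(1)$-degree confirms that $W_P^p$ is indeed the quotient and that the embedding is into $\mathbb{P}^p$. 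Inspecting the generators of $I$ (or a minimal free resolution) shows whether they are all quadratic, establishing the statement on the ideal.

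Once the ideal is in hand, I would compute the singular locus by intersecting $W_P^p$ with the ideal of $2\times 2$ (or $3\times 3$) minors of the Jacobian matrix, and verify that the scheme of singularities consists of exactly five reduced points. Using $\tau$-equivariance, four of these points should form a single $\operatorname{Aut}$-orbit (the images of the quadruple points of the underlying Fano threefold) and the remaining one should be the image of the ``special'' fixed locus of $\tau$ that is responsible for the non-terminality.

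For each singular point $P$, I would translate $P$ to the origin in an affine chart and extract the tangent cone by taking the lowest-degree homogeneous part of the affine ideal (the \texttt{tangentCone} routine in Macaulay2). For the four similar points I expect to recover a cone over a Veronese surface in $\mathbb{P}^5$, and from this the multiplicity $4$; this should be read off from the Hilbert polynomial of the tangent cone (it cuts out a Veronese surface, degree $4$, inside a $\mathbb{P}^5$ sitting in a $\mathbb{P}^6$). For the distinguished point I would again compute the tangent cone: in the case $p=13$ the polynomial ideal should factor as a product of five linear forms, and in the case $p=17$ as a product of four linear forms times an irreducible quadric, yielding multiplicities $5$ and $6$ respectively. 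The verification that the five points are not similar in the sense of Definition~\ref{def:similar} is then automatic, since the multiplicities (and tangent cones) of the four points and of the fifth point are different.

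Finally, to obtain the configuration as in Table~\ref{tab:proKLM}, I would enumerate the ten pairs of singular points and, for each pair $(P,Q)$, check via \texttt{saturate} or \texttt{isSubset} in Macaulay2 whether the ideal of the line $\overline{PQ}$ contains the ideal of $W_P^p$; this determines which pairs are associated. The expected main obstacle is computational cost: the Fano threefolds of \cite{Pro07} do not live in a small projective space, so computing the tangent cone at the distinguished singular point (especially for $p=17$, where the cone is expected to have mixed structure linear-times-quadric) and performing the Jacobian minors computation over $\mathbb{Q}$ may be memory-heavy. Working modulo a prime (and checking that the numerical invariants of the ideal are unchanged between two different primes) should keep the computation feasible, as it does in the earlier sections of the paper.
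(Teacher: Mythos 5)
Your proposal is correct and follows essentially the same route as the paper: starting from Prokhorov's cone construction and the involution $\tau$, the paper uses the $\tau$-invariant quadrics to realize the quotient map into $H_p\cong\mathbb{P}^p$, computes the (quadratic) generators of the ideal with Macaulay2, extracts the tangent cones at the five images of the fixed points via \texttt{tangentCone} (four Veronese cones, one cone over five planes for $p=13$ resp.\ four planes plus a quadric for $p=17$), and determines the configuration by testing containment of the ten joining lines. The only cosmetic difference is that the paper identifies the five singular points directly as images of the fixed locus of $\tau$ rather than via a Jacobian-minors computation, and works over a single finite field.
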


For further details see also Propositions~\ref{prop:cones1234p17},~\ref{prop:cones1234p13} and Theorems~\ref{thm:cone5p17},~\ref{thm:cone5p13}.
As for the KLM-EF 3-fold, we refer to \S~\ref{sec:KLM}. We will find that the ideal of $W_{KLM}^9$ in $\mathbb{P}^9$ is generated by quadrics and cubics. We will see that the images of the eight quadruple points of $W_F^{13}$, via the projection map, are five singular points of $W_{KLM}^9$ such that four of them are quadruple points, whose tangent cone is a cone over a Veronese surface (see Proposition~\ref{prop:cones1234KLM}), and one is a sextuple point, whose tangent cone is a cone over the union of four planes and a quadric surface (see Theorem~\ref{thm:cone5KLM}). 

Finally, in \S~\ref{subsec:normality}, we will study the projective normality of all the above Enriques-Fano threefolds, obtaining the following result.

\begin{theorem}\label{thm:projnormKnown}
The following Enriques-Fano threefolds are projectively normal:
$$W_{KLM}^9\subset \mathbb{P}^9, \quad W_F^{p=7,9,13}\subset \mathbb{P}^p, \quad W_{BS}^{p=6,7,8,9,10,13} \xhookrightarrow{\phi_{\mathcal{L}}} \mathbb{P}^p, \quad W_{P}^{p=13,17}\subset \mathbb{P}^p.$$
\end{theorem}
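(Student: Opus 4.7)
The plan is to verify, for each of the threefolds $W \subset \mathbb{P}^p$ in the list, that the restriction map $H^0(\mathbb{P}^p, \mathcal{O}_{\mathbb{P}^p}(k)) \to H^0(W, \mathcal{O}_W(k))$ is surjective for every $k \geq 1$, equivalently that $H^1(\mathbb{P}^p, \mathcal{I}_W(k)) = 0$ for every $k \geq 1$. Since the defining ideal $I_W$ of each threefold has already been produced explicitly in the preceding sections and loaded into Macaulay2, the verification reduces in each case to a bounded computation.

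The first step is to confine the check to finitely many degrees. From the minimal free resolution of $I_W$, returned by Macaulay2, I would extract the Castelnuovo--Mumford regularity $r := \mathrm{reg}(I_W)$; then $H^1(\mathbb{P}^p, \mathcal{I}_W(k)) = 0$ automatically for $k \geq r - 1$, so only the residual values $1 \leq k \leq r - 2$ need to be checked by hand.

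For those remaining $k$ I would compare two numbers. The Hilbert-function value $\mathrm{HF}_W(k) = \dim_{\mathbb{C}}(\mathbb{C}[x_0, \dots, x_p]/I_W)_k$, read off from Macaulay2, is exactly the dimension of the image of the restriction map in degree $k$. The target dimension $h^0(W, \mathcal{O}_W(k))$ is obtained from the hyperplane sequence $0 \to \mathcal{O}_W((k-1)H) \to \mathcal{O}_W(kH) \to \mathcal{O}_S(kH|_S) \to 0$: since a general hyperplane section $S$ is an Enriques surface with $K_S \equiv 0$ and $H|_S$ ample, Riemann--Roch together with Kodaira-type vanishing give $h^0(S, kH|_S) = \chi(kH|_S) = 1 + k^2(p-1)$ for $k \geq 1$. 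An induction on $k$ then pins down $h^0(W, \mathcal{O}_W(k))$, and projective normality at level $k$ becomes the numerical identity $\mathrm{HF}_W(k) = h^0(W, \mathcal{O}_W(k))$.

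The step I expect to be the main obstacle is the control of the auxiliary groups $H^1(W, \mathcal{O}_W(jH))$ entering tacitly in this induction. For the threefolds with only terminal singularities this is immediate from Kawamata--Viehweg vanishing (noting that $-K_W \equiv H$ is ample and $W$ has klt singularities); for the non-terminal examples $W_P^{13}$, $W_P^{17}$ and $W_{KLM}^9$ such a clean vanishing is not available, and there I would instead bypass the restriction argument and compute $H^1(\mathbb{P}^p, \mathcal{I}_W(k))$ directly from the free resolution in Macaulay2 for the few residual values of $k$, using the explicit generators of $I_W$ produced earlier in the paper. Running through the nine cases separately then yields the full statement.
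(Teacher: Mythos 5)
Your strategy is a genuinely different route from the paper's. The paper does not compute regularities or Hilbert functions at all: it first observes (Proposition~\ref{prop:linnorm}) that any threefold satisfying Assumption~(*) in $\mathbb{P}^p$ with $p$ equal to the sectional genus is automatically linearly normal, and that its general hyperplane section $S\subset\mathbb{P}^{p-1}$ is a linearly normal Enriques surface, by Riemann--Roch on $S$ and the sequence $0\to\mathcal{O}_W\to\mathcal{O}_W(1)\to\mathcal{O}_S(1)\to 0$. It then invokes the theorem of Giraldo--Lopez--Mu\~noz that a linearly normal Enriques surface in $\mathbb{P}^{N-1}$ with $N\ge 7$ (or $N=6$ and $S$ not a Reye congruence) is projectively normal, and lifts projective normality from $S$ to $W$ by the inductive argument of Conte--Murre and Epema (Theorem~\ref{thm:projnorm}). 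This settles every case except $W_{BS}^6$, where $N=6$ forces a single Macaulay2 check that $S\subset\mathbb{P}^5$ lies on no quadric, and genuinely excludes $W_F^6$, whose sections \emph{are} Reye congruences. Your scheme would treat the genus-$6$ exception uniformly, which is a small advantage, and your bookkeeping (regularity bound to truncate the problem, Hilbert function versus $h^0(\mathcal{O}_W(k))$ computed from the Enriques section with Kawamata--Viehweg supplying the $H^1(\mathcal{O}_W(jH))=0$ input --- note that canonical implies klt, so this vanishing is available for the non-terminal examples too) is in principle sound.

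The genuine gap is in what your computation would certify. The explicit ideals produced in the paper are computed over $\mathbb{F}_{10000019}$ and, for several families ($W_{BS}^{6}$, $W_{BS}^{8}$, $W_{BS}^{9}$, and implicitly $W_{BS}^{7}$ with fixed $a_{ijkl}$), only for special values of the defining parameters. A verification that $H^1(\mathcal{I}_W(k))=0$ for those particular models in positive characteristic does not by itself prove the statement for the threefolds over $\mathbb{C}$, let alone for a general member of each family; you would need to add a semicontinuity and reduction-mod-$p$ argument in each bounded range of degrees, and you do not address this. The paper's proof avoids the issue entirely because, apart from the single quadric check for $W_{BS}^6$, it is a characteristic-zero theoretical argument that applies to the actual threefolds as defined, resting only on linear normality (forced by the genus) and the known projective normality of Enriques surfaces.
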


We will work over the field $\mathbb{C}$ of the complex numbers. For the computational analysis we will work over a finite field (we will choose $\mathbb{F}_n := \mathbb{Z}/n\mathbb{Z}$ with $n=10000019$).

\subsection*{Acknowledgment}
The results of this paper are contained in my PhD-thesis. I would like to thank my main advisors C. Ciliberto and C. Galati and my co-advisor A.L. Knutsen for our stimulating conversations and for providing me very useful suggestions.
I would also like to acknowledge PhD-funding from the Department of Mathematics and Computer Science of the University of Calabria and funding from Research project ``Families of curves: their moduli and their related varieties'' (CUP E81-18000100005, P.I. Flaminio Flamini) in the framework of Mission Sustainability 2017 - Tor Vergata University of Rome.


\section{The BS-EF 3-fold of genus 6}\label{subsec:bayle6}

Let us study the Enriques-Fano threefold described in \cite[\S 6.2.4]{Ba94} (see also \cite[Theorem 1.1 No.9]{Sa95}). We refer to \S~\ref{code:bayle6} of Appendix~\ref{app:code} for the computational techniques we will use.  
Let us take the smooth Fano threefold $X$ given by the intersection of three divisors of bidegree $(1,1)$ of $\mathbb{P}^{3}_{[x_0:\dots :x_3]}\times \mathbb{P}^{3}_{[y_0:\dots :y_3]}$ with equations
$$\sum_{i=0}^3 \sum_{j=0}^{3} a_{ij}x_i y_j = 0,\quad \sum_{i=0}^3 \sum_{j=0}^{3} b_{ij}x_i y_j = 0,\quad\sum_{i=0}^3 \sum_{j=0}^{3} c_{ij}x_i y_j = 0,$$
where $a_{ij}=a_{ji}$, $b_{ij}=b_{ji}$, $c_{ij}=c_{ji}$, for $i,j\in\{0,1,2,3\}$.
Let $\sigma : X \to X$ be the involution of $X$ defined by the restriction on $X$ of the following map 
$$\sigma' : \mathbb{P}^3 \times \mathbb{P}^3 \to \mathbb{P}^3 \times \mathbb{P}^3, \,  \left[x_0 : \dots : x_3 \right] \times \left[y_0 : \dots : y_3 \right] \mapsto \left[y_0 : \dots : y_3 \right] \times \left[x_0 : \dots : x_3 \right].$$
We have that $\sigma$ has eight fixed points $p_1$, $p_2$, $p_3$, $p_4$, $p_5$, $p_6$, $p_7$ and $p_8$ with coordinates $\left[x_0 : x_1: x_2 : x_3 \right] \times \left[x_0 : x_1: x_2 : x_3 \right] $ such that
$$\begin{cases}
\sum_{i=0}^3 \sum_{j=0}^{3} a_{ij}x_i x_j = 0\\ 
\sum_{i=0}^3 \sum_{j=0}^{3} b_{ij}x_i x_j = 0\\
\sum_{i=0}^3 \sum_{j=0}^{3} c_{ij}x_i x_j = 0.
\end{cases}$$
The quotient map $\pi : X \to X/\sigma = : W_{BS}^{6}$ is given by the restriction on $X$ of the 
morphism $\varphi : \mathbb{P}^{3}\times \mathbb{P}^3 \to \mathbb{P}^{9}$ defined by the $\sigma'$-invariant multihomogeneous polynomials of multidegree $(1,1)$. Thus 
$\varphi : \left[x_0 : \dots : x_3 \right] \times \left[y_0 : \dots : y_3 \right] \mapsto \left[Z_0:\dots : Z_9 \right],$
where $Z_0=x_0y_0$, $Z_1=x_1y_1$, $Z_2=x_2y_2$, $Z_3=x_3y_3$, $Z_4=x_0y_1+x_1y_0$, $Z_5=x_0y_2+x_2y_0$, $Z_6=x_0y_3+x_3y_0$, $Z_7=x_1y_2+x_2y_1$, $Z_8=x_1y_3+x_3y_1$, $Z_9=x_2y_3+x_3y_2.$ 
By using Macaulay2, one can find that the image of $\mathbb{P}^{3}\times \mathbb{P}^3$ via $\varphi$ is a $6$-dimensional algebraic variety $F_{6}^{10}$ of degree $10$, whose ideal is generated by the following $10$ polynomials
$$-2Z_1Z_5Z_6+Z_4Z_6Z_7+Z_4Z_5Z_8-2Z_0Z_7Z_8+4Z_0Z_1Z_9-Z_4^2Z_9,$$
$$-2Z_2Z_4Z_6+Z_5Z_6Z_7+4Z_0Z_2Z_8-Z_5^2Z_8+Z_4Z_5Z_9-2Z_0Z_7Z_9,$$
$$-4Z_1Z_2Z_6+Z_6Z_7^2+2Z_2Z_4Z_8-Z_5Z_7Z_8+2Z_1Z_5Z_9-Z_4Z_7Z_9,$$
$$-2Z_3Z_4Z_5+4Z_0Z_3Z_7-Z_6^2Z_7+Z_5Z_6Z_8+Z_4Z_6Z_9-2Z_0Z_8Z_9,$$
$$-4Z_1Z_3Z_5+2Z_3Z_4Z_7-Z_6Z_7Z_8+Z_5Z_8^2+2Z_1Z_6Z_9-Z_4Z_8Z_9,$$
$$-4Z_2Z_3Z_4+2Z_3Z_5Z_7+2Z_2Z_6Z_8-Z_6Z_7Z_9-Z_5Z_8Z_9+Z_4Z_9^2,$$
$$-4Z_1Z_2Z_3+Z_3Z_7^2+Z_2Z_8^2-Z_7Z_8Z_9+Z_1Z_9^2,$$
$$-4Z_0Z_2Z_3+Z_3Z_5^2+Z_2Z_6^2-Z_5Z_6Z_9+Z_0Z_9^2,$$
$$-4Z_0Z_1Z_3+Z_3Z_4^2+Z_1Z_6^2-Z_4Z_6Z_8+Z_0Z_8^2,$$
$$-4Z_0Z_1Z_2+Z_2Z_4^2+Z_1Z_5^2-Z_4Z_5Z_7+Z_0Z_7^2.$$
Let us observe that $W_{BS}^{6}= \varphi(X) = F_{6}^{10}\cap H_6$, where $H_6$ is the $6$-dimensional projective subspace of $\mathbb{P}^{9}$ given by the zero locus of the following three polynomials
$$a_{00}Z_0+a_{11}Z_1+a_{22}Z_2+a_{33}Z_3+2a_{01}Z_4+2a_{02}Z_5+2a_{03}Z_6+2a_{12}Z_7+2a_{13}Z_8+2a_{23}Z_9,$$
$$b_{00}Z_0+b_{11}Z_1+b_{22}Z_2+b_{33}Z_3+2b_{01}Z_4+2b_{02}Z_5+2b_{03}Z_6+2b_{12}Z_7+2b_{13}Z_8+2b_{23}Z_9,$$
$$c_{00}Z_0+c_{11}Z_1+c_{22}Z_2+c_{33}Z_3+2c_{01}Z_4+2c_{02}Z_5+2c_{03}Z_6+2c_{12}Z_7+2c_{13}Z_8+2c_{23}Z_9.$$
Therefore we have $\pi = \varphi |_{X} : X \to W_{BS}^{6}=\varphi (X) \subset H_6 \cong \mathbb{P}^{6}$.
What follows has been proved for fixed values of $a_{ij}$, $b_{ij}$ and $c_{ij}$, in order to simplify the computational analysis.

\subsection{Example}\label{ex:bayle6}
Let us take
$$(a_{ij})=\begin{pmatrix}
1 & 0 & 0 & 0\\
0 & -7 & 0 & 0\\ 
0 & 0 & 4 & 0 \\
0 & 0 & 0 & 2
\end{pmatrix}, \, 
(b_{ij})=\begin{pmatrix}
1 & 0 & 0 & 0\\
0 & -6 & 0 & 0\\ 
0 & 0 & 2 & 0 \\
0 & 0 & 0 & 3
\end{pmatrix}, \, 
(c_{ij})=\begin{pmatrix}
1 & 0 & 0 & 0\\
0 & -1 & 0 & 0\\ 
0 & 0 & -7 & 0 \\
0 & 0 & 0 & 7
\end{pmatrix}.$$
Then the eight fixed points of $\sigma : X \to X$ are 
$$p_1 = \left[1:1:1:1\right] \times \left[ 1:1:1:1\right],\,\,
p_2 = \left[-1:1:1:1\right] \times \left[-1:1:1:1\right],$$
$$p_3 = \left[1:-1:1:1\right] \times \left[1:-1:1:1\right],\,
p_4 = \left[-1:-1:1:1\right] \times \left[-1:-1:1:1\right],$$
$$p_5 = \left[1:1:-1:1\right] \times \left[1:1:-1:1\right],\,
p_6 = \left[-1:1:-1:1\right] \times \left[-1:1:-1:1\right],$$
$$p_7 = \left[1:-1:-1:1\right] \times \left[1:-1:-1:1\right],\, 
p_8 = \left[-1:-1:-1:1\right] \times \left[-1:-1:-1:1\right].$$
Furthermore, we have
$$H_6= \{Z_0-7Z_1+4Z_2+2Z_3=0, Z_0-6Z_1+2Z_2+3Z_3=0, Z_0-Z_1-7Z_2+7Z_3=0\}=$$
$$=\{Z_2-Z_3=0,\,Z_1-Z_3=0,\,Z_0-Z_3=0\},$$
which is the $\mathbb{P}^{6}_{\left[w_0:\dots : w_6 \right]}$ embedded in $\mathbb{P}^{9}_{\left[Z_0:\dots :Z_{9} \right]}$ via the morphism such that
\begin{center}
$Z_i = w_0,\, i=0,1,2,3, \quad \quad Z_j = w_{j-3},\, j = 4,\dots ,9.$
\end{center}
By using Macaulay2, we find that the quotient map $\pi : X \to W_{BS}^{6} \subset H_6 \cong \mathbb{P}^{6}$ is given by the restriction on $X$ of the morphism $\varphi' : \mathbb{P}^{3}\times \mathbb{P}^3 \to \mathbb{P}^{6}$ defined by
$\left[ x_{0} : x_1 : x_2 : y_3 : y_4 : y_5 \right] \mapsto \left[w_0:\dots :w_6 \right]$,
where 
$$w_0=x_3y_3,\, w_1=x_0y_1+x_1y_0,\, w_2=x_0y_2+x_2y_0,\, w_3=x_0y_3+x_3y_0,$$ 
$$w_4=x_1y_2+x_2y_1, \, w_5=x_1y_3+x_3y_1, \, w_6=x_2y_3+x_3y_2.$$
Thanks to Macaulay2, we obtain that this BS-EF 3-fold $W_{BS}^{6}\subset \mathbb{P}^{6}$ has ideal generated by the following $10$ polynomials
$$-2w_0w_2w_3+w_1w_3w_4+w_1w_2w_5-2w_0w_4w_5+4w_0^2w_6-w_1^2w_6,$$
$$-2w_0w_1w_3+w_2w_3w_4+4w_0^2w_5-w_2^2w_5+w_1w_2w_6-2w_0w_4w_6,$$
$$-4w_0^2w_3+w_3w_4^2+2w_0w_1w_5-w_2w_4w_5+2w_0w_2w_6-w_1w_4w_6,$$
$$-2w_0w_1w_2+4w_0^2w_4-w_3^2w_4+w_2w_3w_5+w_1w_3w_6-2w_0w_5w_6,$$
$$-4w_0^2w_2+2w_0w_1w_4-w_3w_4w_5+w_2w_5^2+2w_0w_3w_6-w_1w_5w_6,$$
$$-4w_0^2w_1+2w_0w_2w_4+2w_0w_3w_5-w_3w_4w_6-w_2w_5w_6+w_1w_6^2,$$
$$-4w_0^3+w_0w_4^2+w_0w_5^2-w_4w_5w_6+w_0w_6^2,\quad -4w_0^3+w_0w_2^2+w_0w_3^2-w_2w_3w_6+w_0w_6^2,$$
$$-4w_0^3+w_0w_1^2+w_0w_3^2-w_1w_3w_5+w_0w_5^2,\quad -4w_0^3+w_0w_1^2+w_0w_2^2-w_1w_2w_4+w_0w_4^2.$$
Furthermore, $W_{BS}^6$ has eight singular points $P_i:=\pi (p_i)$, for $1\le i \le 8$; they are
$$P_1 = \left[1: 2: 2: 2: 2: 2: 2\right],\, P_2 = \left[1: -2:-2:-2: 2: 2: 2\right],$$
$$P_3 = \left[1: -2: 2: 2:-2:-2: 2\right],\,
P_4 = \left[1: 2:-2:-2:-2:-2: 2\right],$$
$$P_5 = \left[1: 2:-2: 2: -2: 2:-2\right],\,
P_6  = \left[1: -2: 2:-2: -2: 2:-2\right],$$
$$P_7 = \left[1: -2:-2: 2: 2:-2:-2\right],\, P_8 = \left[1: 2: 2:-2: 2:-2:-2\right].$$
One can verify that all the lines joining the points $P_i$ and $P_j$, for $1\le i<j \le 8$, are contained in $W_{BS}^6$. So we can say that each one of the eight singular points of $W_{BS}^{6}$ is associated with all the other $m=7$ points, as in 
Table~\ref{tab:BS67913}
of Appendix~\ref{app:congifuration}. Thus, the singularities of the BS-EF 3-fold $W_{BS}^6$ have the same configuration as the ones of the F-EF 3-fold $W_{F}^{6}$.

\section{The BS-EF 3-fold of genus 7 }\label{subsec:bayle7}

Let us study the Enriques-Fano threefold described in \cite[\S 6.4.1]{Ba94} (see also \cite[Theorem 1.1 No.11]{Sa95}). We refer to \S~\ref{code:bayle7} of Appendix~\ref{app:code} for the computational techniques we will use.
Let $X$ be the smooth Fano threefold given by a divisor of 
$\mathbb{P}^{1}_{\left[x_0 : x_1\right]}\times \mathbb{P}^{1}_{\left[y_0 : y_1\right]}\times \mathbb{P}^{1}_{\left[z_0 : z_1\right]}\times \mathbb{P}^{1}_{\left[t_0 : t_1\right]}$
of type
$$\sum_{i+j+k+l\,\, odd} a_{ijkl}x_iy_jz_kt_l = 0.$$
Let $\sigma : X \to X$ be the involution of $X$ defined by the restriction on $X$ of the map $\sigma' : \mathbb{P}^{1}\times \mathbb{P}^{1}\times \mathbb{P}^{1}\times \mathbb{P}^{1} \to \mathbb{P}^{1}\times \mathbb{P}^{1}\times \mathbb{P}^{1}\times \mathbb{P}^{1}$ given by
$$\left[x_0 : x_1\right] \times \left[y_0 : y_1\right] \times \left[z_0 : z_1\right] \times \left[t_0 : t_1\right] \mapsto \left[x_0 : -x_1\right] \times \left[y_0 : -y_1\right] \times \left[z_0 : -z_1\right] \times \left[t_0 : -t_1\right].$$
Then $\sigma : X \to X$ has the following eight fixed points
$$p_1 = \left[0 : 1\right] \times \left[0 : 1\right] \times \left[0 : 1\right] \times \left[0 : 1\right], \quad 
p_1' = \left[1 : 0\right] \times \left[1 : 0\right] \times \left[1 : 0\right]\times \left[1 : 0\right],$$ 
$$p_2 = \left[0 : 1\right] \times \left[1 : 0\right] \times \left[1 : 0\right] \times \left[0 : 1\right], \quad 
p_2' = \left[1 : 0\right] \times \left[0 : 1\right] \times \left[0 : 1\right] \times \left[1 : 0\right],$$ 
$$p_3 = \left[1 : 0\right] \times \left[1 : 0\right] \times \left[0 : 1\right]\times \left[0 : 1\right], \quad
p_3' = \left[0 : 1\right] \times \left[0 : 1\right] \times \left[1 : 0\right] \times \left[1 : 0\right],$$ 
$$p_4 = \left[1 : 0\right] \times \left[0 : 1\right] \times \left[1 : 0\right]\times \left[0 : 1\right], \quad
p_4' = \left[0 : 1\right] \times \left[1 : 0\right] \times \left[0 : 1\right]\times \left[1 : 0\right].$$
The quotient map $\pi : X \to X/\sigma = : W_{BS}^{7}$ is given by the restriction on $X$ of the 
morphism $\varphi : \mathbb{P}^1\times\mathbb{P}^1\times\mathbb{P}^1\times\mathbb{P}^1 \to \mathbb{P}^7$, defined by the $\sigma'$-invariant multihomogeneous polynomials of multidegree $(1,1,1,1)$. In particular we have
$$\varphi : \left[x_0 : x_1\right] \times \left[y_0 : y_1\right] \times \left[z_0 : z_1\right]\times \left[t_0 : t_1\right] \mapsto \left[w_0 : w_1: w_2: w_3: w_4: w_5: w_6: w_7 \right]
$$
where $w_0 = x_1y_1z_1t_1$, $w_1 = x_1y_0z_0t_1$, $w_2 = x_0y_0z_1t_1$, $w_3 = x_1y_0z_1t_0$, $w_4 = x_0y_0z_0t_0$, $w_5 = x_0y_1z_1t_0$, $w_6 = x_1y_1z_0t_0$, $w_7 = x_0y_1z_0t_1$. 
By fixing (random) values for $a_{0001}$, $a_{0010}$, $a_{0100}$, $a_{1000}$, $a_{1110}$, $a_{1101}$, $a_{1011}$ and $a_{0111}$, one can verify, with Macaulay2, that the ideal of the BS-EF 3-fold $W_{BS}^{7}$ is generated by the following $11$ polynomials of degree $2$ or $3$:
\begin{center}
${w}_{2} {w}_{6}-{w}_{3} {w}_{7},\quad {w}_{1} {w}_{5}-{w}_{3}{w}_{7}, \quad {w}_{0} {w}_{4}-{w}_{3} {w}_{7},$\\
\end{center}

\begin{center}
$a_{1110}{w}_{0} {w}_{5} {w}_{6}+a_{1011}{w}_{0}{w}_{3} {w}_{7}+a_{0111}{w}_{0} {w}_{5} {w}_{7}+a_{0010}{w}_{3} {w}_{5} {w}_{7}+a_{1101}{w}_{0}{w}_{6} {w}_{7}+a_{1000}{w}_{3} {w}_{6} {w}_{7}+a_{0100}{w}_{5} {w}_{6} {w}_{7}+a_{0001}{w}_{3}{w}_{7}^{2},$
\end{center}

\begin{center}
$a_{1000}{w}_{1} {w}_{4} {w}_{6}+a_{1011}{w}_{1} {w}_{3} {w}_{7}+a_{0001}{w}_{1}{w}_{4} {w}_{7}+a_{0010}{w}_{3} {w}_{4} {w}_{7}+a_{1101}{w}_{1} {w}_{6} {w}_{7}+a_{1110}{w}_{3}{w}_{6} {w}_{7}+a_{0100}{w}_{4} {w}_{6} {w}_{7}+a_{0111}{w}_{3} {w}_{7}^{2},$
\end{center}

\begin{center}
$a_{0010}{w}_{3}{w}_{4} {w}_{5}+a_{1000}{w}_{3} {w}_{4} {w}_{6}+a_{1110}{w}_{3} {w}_{5} {w}_{6}+a_{0100}{w}_{4}{w}_{5} {w}_{6}+a_{1011}{w}_{3}^{2} {w}_{7}+a_{0001}{w}_{3} {w}_{4} {w}_{7}+a_{0111}{w}_{3}{w}_{5} {w}_{7}+a_{1101}{w}_{3} {w}_{6} {w}_{7},$
\end{center}

\begin{center}
$a_{0010}{w}_{2} {w}_{4} {w}_{5}+a_{1011}{w}_{2}{w}_{3} {w}_{7}+a_{0001}{w}_{2} {w}_{4} {w}_{7}+a_{1000}{w}_{3} {w}_{4} {w}_{7}+a_{0111}{w}_{2}{w}_{5} {w}_{7}+a_{1110}{w}_{3} {w}_{5} {w}_{7}+a_{0100}{w}_{4} {w}_{5} {w}_{7}+a_{1101}{w}_{3}{w}_{7}^{2},$
\end{center}

\begin{center}
$a_{1011}{w}_{1} {w}_{2} {w}_{3}+a_{0001}{w}_{1} {w}_{2} {w}_{4}+a_{1000}{w}_{1}{w}_{3} {w}_{4}+a_{0010}{w}_{2} {w}_{3} {w}_{4}+a_{1101}{w}_{1} {w}_{3} {w}_{7}+a_{0111}{w}_{2}{w}_{3} {w}_{7}+a_{1110}{w}_{3}^{2} {w}_{7}+a_{0100}{w}_{3} {w}_{4} {w}_{7},$
\end{center}

\begin{center}
$a_{1011}{w}_{0}{w}_{2} {w}_{3}+a_{0111}{w}_{0} {w}_{2} {w}_{5}+a_{1110}{w}_{0} {w}_{3} {w}_{5}+a_{0010}{w}_{2}{w}_{3} {w}_{5}+a_{1101}{w}_{0} {w}_{3} {w}_{7}+a_{0001}{w}_{2} {w}_{3}{w}_{7}+a_{1000}{w}_{3}^{2} {w}_{7}+a_{0100}{w}_{3} {w}_{5} {w}_{7},$
\end{center}

\begin{center}
$a_{1011}{w}_{0} {w}_{1}{w}_{3}+a_{1101}{w}_{0} {w}_{1} {w}_{6}+a_{1110}{w}_{0} {w}_{3} {w}_{6}+a_{1000}{w}_{1} {w}_{3}{w}_{6}+a_{0111}{w}_{0} {w}_{3} {w}_{7}+a_{0001}{w}_{1} {w}_{3} {w}_{7}+a_{0010}{w}_{3}^{2}{w}_{7}+a_{0100}{w}_{3} {w}_{6} {w}_{7},$
\end{center}

\begin{center}
$a_{1011}{w}_{0} {w}_{1} {w}_{2}+a_{1101}{w}_{0} {w}_{1}{w}_{7}+a_{0111}{w}_{0} {w}_{2} {w}_{7}+a_{0001}{w}_{1} {w}_{2} {w}_{7}+{a_{1110}w}_{0} {w}_{3}{w}_{7}+a_{1000}{w}_{1} {w}_{3} {w}_{7}+a_{0010}{w}_{2} {w}_{3} {w}_{7}+a_{0100}{w}_{3}{w}_{7}^{2}.$
\end{center}
The eight singular points of $W_{BS}^7$ are
$P_i := \pi (p_i) = \{w_k = 0 | k\ne i-1\}$ and $P_i' := \pi (p_i') = \{w_k = 0| k \ne 3+i\}$, for $1\le i\le 4.$
Let $l_{i,j}$ be the line joining $P_i$ and $P_j$ with $i,j \in \{1,2,3,4,1',2',3',4'\}$ and $i \ne j$.
We have that $W_{BS}^{7}$ does not contain the lines
$l_{1,1'}$, $l_{2,2'}$, $l_{3,3'}$ and $l_{4,4'}$,
while it contains the others. So each one of the eight singular points of $W_{BS}^{7}$ is associated with $m=6$ of the other singular points, as in 
Table~\ref{tab:BS67913} of Appendix~\ref{app:congifuration}. Thus, the singularities of the BS-EF 3-fold $W_{BS}^7$ have the same configuration as the ones of the F-EF 3-fold $W_{F}^{7}$.

\section{The BS-EF 3-fold of genus 9}\label{subsec:bayle9}

Let us study the Enriques-Fano threefold described in\cite[\S 6.1.4]{Ba94} (see also \cite[Theorem 1.1 No.12]{Sa95}). 
We refer to \S~\ref{code:bayle9} of Appendix~\ref{app:code} for the computational techniques we will use.
Let us take two quadric hypersurfaces of $\mathbb{P}^{5}_{[x_0:x_1:x_2:y_3:y_4:y_5]}$,
$$Q_1 :=\{ s_1(x_0,x_1,x_2)+ r_1(y_3,y_4,y_5) = 0 \}, \,\, Q_2 :=\{ s_2(x_0,x_1,x_2)+ r_2(y_3,y_4,y_5) = 0\},$$
where $s_1$, $s_2$, $r_1$, $r_2$ are the following quadratic homogeneous forms:
$$s_1(x_0,x_1,x_2) := \sum_{i,j \in \{0,1,2\}}a_{i,j}x_ix_j,\quad s_2(x_0,x_1,x_2) := \sum_{i,j \in \{0,1,2\}}a_{i,j}'x_ix_j,$$
$$r_1(y_3,y_4,y_5) := \sum_{i,j \in \{3,4,5\}}b_{i,j}y_iy_j,\quad r_2(y_3,y_4,y_5) := \sum_{i,j \in \{3,4,5\}}b_{i,j}'y_iy_j.$$
Let us consider the smooth Fano threefold $X := Q_1 \cap Q_2$ and the involution $\sigma$ of $X$ defined by the restriction on $X$ of the following morphism 
$$\sigma' : \mathbb{P}^5  \to \mathbb{P}^5,\, \left[ x_{0} : x_1 : x_2 : y_3 : y_4 : y_5 \right] \mapsto] \left[ x_0 : x_1 : x_2 : -y_3 : -y_4 : -y_5 \right].$$
Then $\sigma :X \to X$ has eight fixed points $p_1$, $p_2$, $p_3$, $p_4$, $p_1'$, $p_2'$, $p_3'$, $p_4'$ such that
$$\{p_1, p_2, p_3, p_4\} = X \cap \{y_3=y_4=y_5=0\}, \,\, \{p_1', p_2', p_3', p_4'\} = X \cap \{x_0=x_1=x_2=0\}.$$
The quotient map $\pi : X \to X/\sigma = : W_{BS}^{9}$ is given by the restriction on $X$ of the 
morphism defined by the linear system of the $\sigma$-invariant quadric hypersurfaces of $\mathbb{P}^{5}$, that is the morphism
$\varphi : \mathbb{P}^{5} \to \mathbb{P}^{11}_{\left[Z_0 : \dots : Z_{11}\right]}$ such that
$$\begin{tikzcd}
\left[ x_{0} : x_1 : x_2 : y_3 : y_4 : y_5 \right] \arrow[d, mapsto]\\
 \left[x_0^2: x_1^2: x_2^2: x_0x_1: x_0x_2: x_1x_2: y_3^2: y_4^2: y_5^2: y_3y_4: y_3y_5: y_4y_5 \right].
\end{tikzcd}$$
By using Macaulay2, one can find that the image of $\mathbb{P}^{5}$ via $\varphi$ is a $5$-dimensional algebraic variety $F_5^{16}$ of degree $16$, whose ideal is generated by the following $12$ polynomials
$${Z}_{9} {Z}_{10}-{Z}_{6} {Z}_{11},\,\, {Z}_{7}{Z}_{10}-{Z}_{9} {Z}_{11},\,\, {Z}_{8} {Z}_{9}-{Z}_{10} {Z}_{11}, \,\, {Z}_{7}{Z}_{8}-{Z}_{11}^{2},\,\, {Z}_{6} {Z}_{8}-{Z}_{10}^{2},\,\, {Z}_{6}{Z}_{7}-{Z}_{9}^{2},$$
$${Z}_{3} {Z}_{4}-{Z}_{0} {Z}_{5}, \,\, {Z}_{1}{Z}_{4}-{Z}_{3} {Z}_{5},\,\, {Z}_{2} {Z}_{3}-{Z}_{4} {Z}_{5},\,\, {Z}_{1}{Z}_{2}-{Z}_{5}^{2},\,\, {Z}_{0} {Z}_{2}-{Z}_{4}^{2},\,\, {Z}_{0}{Z}_{1}-{Z}_{3}^{2}.$$
We observe that $W_{BS}^{9}= \varphi(X) = F_{5}^{16}\cap H_9$, where $H_9$ is the following $9$-dimensional projective subspace of $\mathbb{P}^{11}$
$$H_9:=\{a_{00}Z_{0}+a_{11}Z_{1}+a_{22}Z_{2}+(a_{01}+a_{10})Z_{3}+(a_{02}+a_{20})Z_{4}+(a_{12}+a_{21})Z_{5}+$$
$$+b_{33}Z_{6}+b_{44}Z_{7}+b_{55}Z_{8}+(b_{34}+b_{43})Z_{9}+(b_{35}+b_{53})Z_{10}+(b_{45}+b_{54})Z_{11} = 0,$$
$$a_{00}'Z_{0}+a_{11}'Z_{1}+a_{22}'Z_{2}+(a_{01}'+a_{10}')Z_{3}+(a_{02}'+a_{20}')Z_{4}+(a_{12}'+a_{21}')Z_{5}+$$
$$+b_{33}'Z_{6}+b_{44}'Z_{7}+b_{55}'Z_{8}+(b_{34}'+b_{43}')Z_{9}+(a_{35}'+b_{53}')Z_{10}+(b_{45}'+b_{54}')Z_{11} = 0\}.$$
Therefore we have $\pi = \varphi |_{X} : X \to W_{BS}^{9} = \varphi (X) \subset H_9 \cong \mathbb{P}^{9}$.
What follows has been proved for fixed values of $a_{ij}$, $b_{ij}$, $a_{ij}'$ and $b_{ij}'$, in order to simplify the computational analysis.

\subsection{Example}\label{ex:bayle9}
If
$(a_{ij})=\begin{pmatrix}
1 & 0 & 0 \\
0 & -3 & 0\\ 
0 & 0 & 2
\end{pmatrix} = (b_{ij}')$ and
$(b_{ij})=\begin{pmatrix}
3 & 0 & 0 \\
0 & -8 & 0\\ 
0 & 0 & 5 
\end{pmatrix} = (a_{ij}'),$
then we obtain
$$p_1 = \left[1:1:1:0:0:0\right], \, p_1' \left[0:0:0:1:1:1\right]$$
$$p_2 = \left[-1:1:1:0:0:0\right], \, p_2' = \left[0:0:0:-1:1:1\right]$$
$$p_3 = \left[1:-1:1:0:0:0\right], \, p_3' = \left[0:0:0:1:-1:1\right]$$
$$p_4 = \left[1:1:-1:0:0:0\right], \, p_4' = \left[0:0:0:1:1:-1\right].$$
Furthermore, we have
$$H_9=\{Z_{0}-3Z_{1}+2Z_{2}+3Z_{6}-8Z_{7}+5Z_{8}= 0,\, 3Z_{0}-8Z_{1}+5Z_{2}+Z_{6}-3Z_{7}+2Z_{8} = 0\}=$$
$$=\{{Z}_{1}-{Z}_{2}-8 {Z}_{6}+21 {Z}_{7}-13{Z}_{8}=0,\,
{Z}_{0}-{Z}_{2}-21 {Z}_{6}+55 {Z}_{7}-34 {Z}_{8}=0\},$$
which is the $\mathbb{P}^{9}_{\left[w_0:\dots : w_9 \right]}$ embedded in $\mathbb{P}^{11}_{\left[Z_0:\dots :Z_{11} \right]}$ via the morphism such that
$$Z_0 = w_0+21w_4-55w_5+34w_6,\,\, Z_1 = w_0+8w_4-21w_5+13w_6, \,\, Z_{i+2} = w_i,\, i=0,\dots 9.$$
By using Macaulay2, we find that the quotient map $\pi : X \to W_{BS}^{9} \subset H_9 \cong \mathbb{P}^{9}$ is given by the restriction on $X$ of the morphism $\varphi' : \mathbb{P}^{5} \to \mathbb{P}^{9}$ such that
$$\begin{tikzcd}
\left[ x_{0} : x_1 : x_2 : y_3 : y_4 : y_5 \right] \arrow[d,mapsto] \\
\left[x_2^2: x_0x_1: x_0x_2: x_1x_2: y_3^2: y_4^2: y_5^2: y_3y_4: y_3y_5: y_4y_5 \right].
\end{tikzcd}$$
In particular we obtain a BS-EF 3-fold $W_{BS}^{9}\subset \mathbb{P}^{9}$ whose ideal is generated by the following $12$ quadratic polynomials
\begin{center}
${w}_{7} {w}_{8}-{w}_{4} {w}_{9},\,\, {w}_{5} {w}_{8}-{w}_{7}{w}_{9},\,\, {w}_{6} {w}_{7}-{w}_{8} {w}_{9}, \,\, {w}_{5}{w}_{6}-{w}_{9}^{2},\,\, {w}_{4} {w}_{6}-{w}_{8}^{2},\,\, {w}_{4}{w}_{5}-{w}_{7}^{2},$

${w}_{2}^{2}-{w}_{3}^{2}-13 {w}_{0} {w}_{4}+34 {w}_{0}{w}_{5}-21 {w}_{0} {w}_{6},$

${w}_{1} {w}_{2}-{w}_{0} {w}_{3}-21 {w}_{3}{w}_{4}+55 {w}_{3} {w}_{5}-34 {w}_{3} {w}_{6},$

${w}_{0} {w}_{2}-{w}_{1}{w}_{3}+8 {w}_{2} {w}_{4}-21 {w}_{2} {w}_{5}+13 {w}_{2}{w}_{6},$

\begin{small}
${w}_{1}^{2}-{w}_{3}^{2}-21 {w}_{0} {w}_{4}-168 {w}_{4}^{2}+55{w}_{0} {w}_{5}-1155 {w}_{5}^{2}-34 {w}_{0} {w}_{6}-442 {w}_{6}^{2}+881{w}_{7}^{2}-545 {w}_{8}^{2}+1429 {w}_{9}^{2},$
\end{small}
${w}_{0} {w}_{1}-{w}_{2}{w}_{3},\quad {w}_{0}^{2}-{w}_{3}^{2}+8 {w}_{0} {w}_{4}-21 {w}_{0} {w}_{5}+13{w}_{0} {w}_{6}.$
\end{center}
This threefold $W_{BS}^9$ 
has singular points at $P_i:=\pi (p_i)$ and $P_i':=\pi (p_i')$, for $1\le i \le 4$, that is
$$P_1 = \left[1: 1: 1: 1: 0: 0: 0: 0: 0: 0 \right],\,
P_1' = \left[0: 0: 0: 0: 1: 1: 1: 1: 1: 1\right],$$
$$P_2 = \left[1: -1: -1: 1: 0: 0: 0: 0: 0: 0\right],\, P_2' = \left[0: 0: 0: 0: 1: 1: 1: -1: -1: 1\right],$$
$$P_3 = \left[1: -1: 1: -1: 0: 0: 0: 0: 0: 0\right],\, P_3' = \left[0: 0: 0: 0: 1: 1: 1: -1: 1: -1\right],$$
$$P_4 = \left[1: 1: -1: -1: 0: 0: 0: 0: 0: 0\right],\, P_4' = \left[0: 0: 0: 0: 1: 1: 1: 1: -1: -1\right].$$
Let $l_{i,j}$ be the line joining $P_i$ and $P_j$ for $i,j \in \{1,2,3,4,1',2',3',4'\}$ and $i \ne j$. It follows that $W_{BS}^{9}$ contains the lines
$l_{1,1'}$, $l_{1,2'}$, $l_{1,3'}$, $l_{1,4'}$, $l_{2,1'}$, $l_{2,3'}$, $l_{2,3'}$, $l_{2,4'}$, $l_{3,1'}$, $l_{3,2'}$, $l_{3,3'}$, $l_{3,4'}$, $l_{4,1'}$, $l_{4,3'}$, $l_{4,3'}$, $l_{4,4'},$
but it does not contain the others. So each one of the eight singular points of $W_{BS}^{9}$ is associated with $m=4$ of the other singular points, as in 
Table~\ref{tab:BS67913}
of Appendix~\ref{app:congifuration}. Thus, the singularities of the BS-EF 3-fold $W_{BS}^9$ have the same configuration as the ones of the F-EF 3-fold $W_{F}^{9}$.
We can say something more, namely that $W_{BS}^{9} = W_F^{9}$. Let us see how.
Let us project $\mathbb{P}^9$ from the $\mathbb{P}^5$ spanned by the singular points $P_2$, $P_3$, $P_4$, $P_2'$, $P_3'$, $P_4'$ of 
$W_{BS}^9$.
By using Macaulay2, we obtain the rational map $\rho : \mathbb{P}^{9} \dashrightarrow \mathbb{P}^{3}_{\left[z_0:\dots : z_3\right]}$ such that
$\left[w_0 : \dots : w_{9} \right] \mapsto \left[{w}_{0}+{w}_{1}+{w}_{2}+{w}_{3}: -{w}_{4}+{w}_{5}: -{w}_{4}+{w}_{6}: {w}_{4}+{w}_{7}+{w}_{8}+{w}_{9}\right].$
The restriction $\rho|_{ W_{BS}^{9}} : W_{BS}^{9} \dashrightarrow \mathbb{P}^{3}$ is a birational map (it can be verified through Macaulay2), whose inverse map
is the rational map
$\nu : \mathbb{P}^{3} \dashrightarrow  W_{BS}^{9} \subset \mathbb{P}^{9}$
defined by the linear system 
of the septic surfaces of $\mathbb{P}^{3}$ double along the six edges of the two trihedra
$$T :=\{(z_0  - 21z_1  + 13z_2 )z_0(z_0  - 55z_1  + 34z_2) = 0\},\, T':=\{ (z_2  + z_3)(z_1+z_3)z_3 = 0\},$$
and containing the lines given by the intersection of a face of $T$ and one of $T'$.


\section{The BS-EF 3-fold of genus 13}\label{subsec:bayle13}

Let us study the Enriques-Fano threefold described in \cite[\S 6.3.2]{Ba94} (see also \cite[Theorem 1.1 No.14]{Sa95}).
We refer to \S~\ref{code:bayle13} of Appendix~\ref{app:code} for the computational techniques we will use. 
Let us take the smooth Fano threefold $X := \mathbb{P}^{1}\times \mathbb{P}^{1}\times \mathbb{P}^{1}$ and the map $\sigma : X \to  X$ defined by
$$\left[x_0 : x_1\right] \times \left[y_0 : y_1\right] \times \left[z_0 : z_1\right] \mapsto \left[x_0 : -x_1\right] \times \left[y_0 : -y_1\right] \times \left[z_0 : -z_1\right].$$
Then $\sigma$ is an involution of $X$ having the following eight fixed points
$$p_1' = \left[0 : 1\right] \times \left[1 : 0\right] \times \left[1 : 0\right], \quad
p_1 = \left[1 : 0\right] \times \left[0 : 1\right] \times \left[0 : 1\right],$$
$$p_2' = \left[0 : 1\right] \times \left[0 : 1\right] \times \left[0 : 1\right], \quad 
p_2 = \left[1 : 0\right] \times \left[1 : 0\right] \times \left[1 : 0\right],$$ 
$$p_3 = \left[0 : 1\right] \times \left[1 : 0\right] \times \left[0 : 1\right], \quad
p_3' = \left[1 : 0\right] \times \left[0 : 1\right] \times \left[1 : 0\right],$$ 
$$p_4 = \left[0 : 1\right] \times \left[0 : 1\right] \times \left[1 : 0\right], \quad 
p_4' = \left[1 : 0\right] \times \left[1 : 0\right] \times \left[0 : 1\right].$$ 
The $\sigma$-invariant multihomogeneous polynomials of multidegree $(2,2,2)$
define the coordinates of the quotient map $\pi : X \to X / \sigma = : W_{BS}^{13} \subset \mathbb{P}^{13}$, i.e.
$$\pi : \left[x_0 : x_1\right] \times \left[y_0 : y_1\right] \times \left[z_0 : z_1\right] \mapsto \left[w_0 : \dots : w_{13} \right], \text{ where}$$
$$w_0 = x_0^2y_0^2z_0^2,\, w_1 = x_0^2y_0^2z_1^2,\, w_2 = x_0^2y_0y_1z_0z_1, \, w_3 = x_0^2y_1^2z_0^2,\, w_4 = x_0^2y_1^2z_1^2,$$ 
$$w_5 =  x_0x_1y_0^2z_0z_1,\, w_6 = x_0x_1y_0y_1z_0^2,\, w_7 = x_0x_1y_0y_1z_1^2,\, w_8 = x_0x_1y_1^2z_0z_1,$$
$$w_9 = x_1^2y_0^2z_0^2,\, w_{10} = x_1^2y_0^2z_1^2,\, w_{11} = x_1^2y_0y_1z_0z_1,\, w_{12} = x_1^2y_1^2z_0^2,\, w_{13} = x_1^2y_1^2z_1^2.$$
The use of Macaulay2 enables us to find that the BS-EF 3-fold $W_{BS}^{13}$ has ideal generated by the following $42$ quadratic polynomials 
\begin{center}
${w}_{10} {w}_{12}-{w}_{9} {w}_{13},\quad
{w}_{7}{w}_{12}-{w}_{6} {w}_{13},\quad
{w}_{4} {w}_{12}-{w}_{3} {w}_{13},\quad
{w}_{1}{w}_{12}-{w}_{0} {w}_{13},$\\
${w}_{11}^{2}-{w}_{9} {w}_{13},\quad
{w}_{8}{w}_{11}-{w}_{6} {w}_{13},\quad
{w}_{7} {w}_{11}-{w}_{5} {w}_{13},\quad
{w}_{6}{w}_{11}-{w}_{5} {w}_{12},$\\
${w}_{4} {w}_{11}-{w}_{2} {w}_{13},\quad
{w}_{3}{w}_{11}-{w}_{2} {w}_{12},\quad
{w}_{2} {w}_{11}-{w}_{0} {w}_{13},\quad
{w}_{8}{w}_{10}-{w}_{5} {w}_{13},$\\
${w}_{6} {w}_{10}-{w}_{5} {w}_{11},\quad
{w}_{4}{w}_{10}-{w}_{1} {w}_{13},\quad
{w}_{3} {w}_{10}-{w}_{0} {w}_{13},\quad
{w}_{2}{w}_{10}-{w}_{1} {w}_{11},$\\
${w}_{8} {w}_{9}-{w}_{5} {w}_{12},\quad
{w}_{7}{w}_{9}-{w}_{5} {w}_{11},\quad
{w}_{4} {w}_{9}-{w}_{0} {w}_{13},\quad
{w}_{3}{w}_{9}-{w}_{0} {w}_{12},$\\
${w}_{2} {w}_{9}-{w}_{0} {w}_{11},\quad
{w}_{1}{w}_{9}-{w}_{0} {w}_{10},\quad
{w}_{8}^{2}-{w}_{3} {w}_{13},\quad
{w}_{7}{w}_{8}-{w}_{2} {w}_{13},$\\
${w}_{6} {w}_{8}-{w}_{2} {w}_{12},\quad
{w}_{5}{w}_{8}-{w}_{0} {w}_{13},\quad
{w}_{7}^{2}-{w}_{1} {w}_{13},\quad
{w}_{6}{w}_{7}-{w}_{0} {w}_{13},$\\
${w}_{5} {w}_{7}-{w}_{1} {w}_{11},\quad
{w}_{3}{w}_{7}-{w}_{2} {w}_{8},\quad
{w}_{2} {w}_{7}-{w}_{1}{w}_{8},\quad
{w}_{6}^{2}-{w}_{0} {w}_{12},$\\
${w}_{5} {w}_{6}-{w}_{0}{w}_{11},\quad
{w}_{4} {w}_{6}-{w}_{2} {w}_{8},\quad
{w}_{2} {w}_{6}-{w}_{0}{w}_{8},\quad
{w}_{1} {w}_{6}-{w}_{0} {w}_{7},$\\
${w}_{5}^{2}-{w}_{0}{w}_{10},\quad
{w}_{4} {w}_{5}-{w}_{1} {w}_{8},\quad
{w}_{3} {w}_{5}-{w}_{0}{w}_{8},\quad
{w}_{2} {w}_{5}-{w}_{0} {w}_{7},$\\
${w}_{1} {w}_{3}-{w}_{0}{w}_{4},\quad
{w}_{2}^{2}-{w}_{0} {w}_{4}.$
\end{center}
The above threefold $W_{BS}^{13}$ has the following eight singular points
$$P_1 := \pi (p_1) = \{ w_i = 0 | i\ne 4\}, \quad P_1' := \pi (p_1') = \{w_i = 0 | i\ne 9\}$$
$$P_2 := \pi (p_2) = \{ w_i = 0 | i\ne 0\}, \quad P_2' := \pi (p_2') = \{ w_i = 0 | i\ne 13 \},$$
$$P_3 := \pi (p_3) = \{ w_i = 0 | i\ne 10\}, \quad P_3' := \pi (p_3') = \{ w_i = 0 | i\ne 3 \},$$ 
$$P_4 := \pi (p_4) = \{ w_i = 0 | i\ne 12\}, \quad  P_4' := \pi (p_4') = \{ w_i = 0 | i\ne 1\}.$$
Let $l_{i,j}$ be the line joining $P_i$ and $P_j$ with $i,j \in \{1,2,3,4,1',2',3',4'\}$ and $i \ne j$.
We see that $W_{BS}^{13}$ contains the lines
$l_{1,2'}$, $l_{1,3'}$, $l_{1,4'}$, $l_{2,1'}$, $l_{2,3'}$, $l_{2,4'}$, $l_{3,1'}$, $l_{3,2'}$, $l_{3,4'}$, $l_{4,1'}$, $l_{4,2'}$, $l_{4,3'},$ 
while it does not contain the others. So each one of the eight singular points of $W_{BS}^{13}$ is associated with $m=3$ of the other singular points, as in 
Table~\ref{tab:BS67913}
of Appendix~\ref{app:congifuration}. Thus, the singularities of the BS-EF 3-fold $W_{BS}^{13}$ have the same configuration as the ones of the F-EF 3-fold $W_{F}^{13}$.
We can say something more, namely that $W_{BS}^{13} = W_F^{13}$. Let us see how.
Let $\mathcal{S}$ be the linear system of the sextic surfaces of $\mathbb{P}^{3}$ having double points along the six edges of a fixed tetrahedron $T\subset \mathbb{P}^{3}$. 
Up to a change of coordinates, we can take the tetrahedron $T:=\{t_0t_1t_2t_3=0\}\subset \mathbb{P}^{3}_{\left[t_0:\dots : t_3\right]}$. Then $\mathcal{S}$ defines a rational map
$\nu_{\mathcal{S}} : \mathbb{P}^{3} \dashrightarrow \mathbb{P}^{13}$ given by $
\left[t_0 : t_1 : t_2 : t_3 \right] \mapsto  \left[w_0 : \dots : w_{13}\right],$
where
\begin{center}
$w_{0} = {t}_{0} {t}_{1}^{3} {t}_{2} {t}_{3},$
$w_{1} = {t}_{0}^{2} {t}_{1}^{2} {t}_{2}^{2},$
$w_{2} = {t}_{0}^{2} {t}_{1}^{2} {t}_{2} {t}_{3},$
$w_{3} = {t}_{0}^{2} {t}_{1}^{2} {t}_{3}^{2},$
$w_{4} = {t}_{0}^{3} {t}_{1} {t}_{2} {t}_{3},$

$w_{5} = {t}_{0} {t}_{1}^{2} {t}_{2}^{2} {t}_{3},$
$w_{6} = {t}_{0} {t}_{1}^{2} {t}_{2} {t}_{3}^{2},$
$w_{7} = {t}_{0}^{2} {t}_{1} {t}_{2}^{2} {t}_{3},$
$w_{8} = {t}_{0}^{2} {t}_{1} {t}_{2} {t}_{3}^{2},$
$w_{9} = {t}_{1}^{2} {t}_{2}^{2} {t}_{3}^{2},$

$w_{10} = {t}_{0} {t}_{1} {t}_{2}^{3} {t}_{3},$
$w_{11} = {t}_{0} {t}_{1} {t}_{2}^{2} {t}_{3}^{2},$
$w_{12} = {t}_{0} {t}_{1} {t}_{2} {t}_{3}^{3},$
$w_{13} = {t}_{0}^{2} {t}_{2}^{2} {t}_{3}^{2}$
\end{center}
(see \cite[p.635]{GH}). The above map is birational onto the image, which is the F-EF 3-fold $W_F^{13}\subset \mathbb{P}^{13}$ of genus $13$ (see \cite[\S 8]{Fa38}). Thanks to Macaulay2, we see that the threefold $W_F^{13}\subset \mathbb{P}^{13}$ coincides with the threefold $W_{BS}^{13}\subset \mathbb{P}^{13}$. 
Thus, we have the assertion of Theorem~\ref{thm:B13=F13} for $p=13$.
%


\section{The BS-EF 3-fold of genus 8}\label{subsec:bayle8}

Let us study the Enriques-Fano threefold described in \cite[\S 6.4.2]{Ba94}. Sano erroneously omits it (see \cite[p. 378]{Sa95}). We refer to \S~\ref{code:bayle8} of Appendix~\ref{app:code} for the computational techniques we will use. Let us take the hyperplane $\{x_4=0\}\subset \mathbb{P}^4_{\left[x_0:x_1:x_2:x_3:x_4\right]}$ and two quadric surfaces $Q,R\subset \{x_4=0\} \cong \mathbb{P}^3_{\left[x_0:x_1:x_2:x_3\right]}$, respectively with equations 
$$Q(x_0,x_1,x_2,x_3):=q_{00}x_0^2+q_{11}x_1^2+q_{22}x_2^2+q_{33}x_3^2+q_{01}x_0x_1+q_{23}x_2x_3=0,$$ 
$$R(x_0,x_1,x_2,x_3):=r_{00}x_0^2+r_{11}x_1^2+r_{22}x_2^2+r_{33}x_3^2+r_{01}x_0x_1+r_{23}x_2x_3=0.$$
Let $C:=Q\cap R$ be the elliptic quartic curve given by the complete intersection of the above quadrics, and let $Y\subset \mathbb{P}^4$ be the 
cone over $Q$ with vertex at the point $v:=\left[0:0:0:0:1\right]$. 
Let $bl : X \to Y$ be the 
blow-up of $Y$ at the point $v$ and along the curve $C$. We have that $X$ is a smooth Fano threefold. Let us explain this. Let us consider the blow-up of $\mathbb{P}^4$ at $v$ and along $C$, that is the map $bl' : \operatorname{Bl}_{v\cup C}\mathbb{P}^4 \to \mathbb{P}^4$ with exceptional divisors $E_v := bl'^{-1}(v)$ and $E_C := bl'^{-1}(C)$. By definition, we have that $X$ is the strict transform of $Y$ on $\operatorname{Bl}_{v\cup C}\mathbb{P}^4$ and that $bl=bl'|_{X}$. If $H$ denotes the pullback of the hyperplane class $h$ of $\mathbb{P}^4$, we have that $X\sim 2H-2E_v-E_C$. By the adjunction formula, we have that $-K_X = -(K_{\operatorname{Bl}_{v\cup C}\mathbb{P}^4}+X)|_{X} \sim (3H-E_v-E_C)|_{X}$. We want to show that $-K_X$ is ample. Let $\mathcal{C}$ be the linear system of the cubic hypersurfaces of $\mathbb{P}^4$ containing the curve $C$ and passing through the point $v$, and let us fix a general hyperplane $h_v \subset \mathbb{P}^4$ passing through $v$. Thus, $\mathcal{C}$ contains a sublinear system $\overline{\mathcal{C}} \subset \mathcal{C}$ whose fixed part is given by $h_v\cup \{x_4=0\}$. 
Since the movable part of $\overline{\mathcal{C}}$ is given by the hyperplanes of $\mathbb{P}^4$, then we obtain the ampleness of $\mathcal{C}$ at least outside $v\cup C$.
So we have the ampleness of $-K_X$ at least outside $E_v\cap X$ and $E_C\cap X$, since $|-K_X|$ coincides with the restriction on $X$ of the strict transform of $\mathcal{C}$.
Furthermore, the movable part of $\overline{\mathcal{C}}$
also contains the hyperplanes of $\mathbb{P}^4$ through $v$, whose strict transforms are very ample on $E_v$: indeed, we have $|\mathcal{O}_{E_v}(H-E_v)|=|\mathcal{O}_{E_v}(-E_v^2)|\cong|\mathcal{O}_{\mathbb{P}^3}(1)|$ (see \cite[Chap 4, \S 6]{GH}).
Therefore, the ampleness of $-K_X$ along $E_v\cap X$ follows by the fact that $E_v\cap X$ is a smooth quadric surface in $E_v\cong \mathbb{P}^3$.
It remains to show the ampleness of $-K_X$ along $S':=E_C\cap X$, which is a $\mathbb{P}^{1}$-bundle 
over $C$, identified with the projectification $\mathbb{P}(\mathcal{N}_{C|Y})$ of the normal bundle of $C$ in $Y$ (see \cite[Chap 4, \S 6]{GH}).
Since $C$ is the complete intersection of a hyperplane section and of a quadric section of $Y$, then 
$S' = \mathbb{P}(\mathcal{N}_{C|Y})\cong \mathbb{P}(\mathcal{O}_{C}(h)\oplus \mathcal{O}_{C}(2h))$ (see \cite[Example 10.2]{CS89}).
In particular we have that the class $S'|_{S'}$ is the class of the tautological bundle on $S'$ (see \cite[Chap 4, \S 6]{GH}). Thus, $-E_C|_{S'}=-S'|_{S'}$ is ample on $S'$, and so $(-K_X)|_{S'}=(3H-E_C)|_{S'}$ is ample too.

Let $\sigma : X \to X$ be now the morphism defined by the birational map $\sigma' : Y \dashrightarrow Y$ given by
$\left[x_0 : x_1 : x_2:x_3:x_4\right] \mapsto \left[x_4x_0 : x_4x_1 : -x_4x_2: -x_4x_3 : R(x_0,x_1,x_2,x_3)\right].$
The map $\sigma$ is an involution of $X$ with eight fixed points, which are the preimages via $bl : X \to Y$ of the eight points $p_1,p_2,p_3,p_4,p_1',p_2',p_3',p_4'\in Y$ such that
$$\{p_1,p_1',p_2,p_2'\}=Y\cap\{x_2=0,\,x_3=0,\,x_4^2-R(x_0,x_1,x_2,x_3)=0\},$$ 
$$\{p_3,p_3',p_4,p_4'\}=Y\cap\{x_0=0,\,x_1=0,\,x_4^2+R(x_0,x_1,x_2,x_3)=0\}.$$
The $\sigma '$-invariant elements of $\mathcal{C}$
define the rational map $\varphi: Y \dashrightarrow \mathbb{P}^9$
given by $\left[x_0:\dots :x_4\right] \mapsto \left[Z_0:\dots :Z_9\right]$, where
\begin{center}
$Z_0 = x_4^2x_0+x_0R(x_0,x_1,x_2,x_3),\quad Z_1 = x_4^2x_1+x_1R(x_0,x_1,x_2,x_3),$\\
$Z_2 = x_4^2x_2-x_2R(x_0,x_1,x_2,x_3),\quad Z_3 = x_4^2x_3-x_3R(x_0,x_1,x_2,x_3),$\\ 
$Z_4 = x_4x_0^2,\,\,\, Z_5 = x_4x_1^2,\,\,\, Z_6 = x_4x_2^2,\,\,\, Z_7 = x_4x_3^2,\quad Z_8 = x_4x_0x_1,\,\,\, Z_9 =x_4x_2x_3.$
\end{center}
Let us observe that $\varphi(Y)$ is contained in the hyperplane
$$H_8 := \{q_{00}Z_4+q_{11}Z_5+q_{22}Z_6+q_{33}Z_7+q_{01}Z_8+q_{23}Z_9=0\}\cong \mathbb{P}^8 \subset \mathbb{P}^9.$$
Therefore the rational map $\varphi$ defines the quotient map $\pi : X \to X / \sigma = : W_{BS}^{8}$, thanks to the following commutative diagram
$$\begin{tikzcd}
X \arrow[d, "bl"] \arrow[dr, "\pi"] & \\
Y  \arrow[r, dashrightarrow, "\varphi"] & \varphi(Y) = \pi(X) = W_{BS}^8\subset H_8 \cong \mathbb{P}^{8}.
\end{tikzcd}$$
What follows has been proved for fixed values of $q_{ij}$ and $r_{ij}$, in order to simplify the computational analysis.

\subsection{Example}\label{ex: ideal bayle8}
Let us take
$$Q(x_0,x_1,x_2,x_3) = x_0^2-x_1^2 -x_2^2+x_3^2 \quad \text{and} \quad
R(x_0,x_1,x_2,x_3) = 2x_0^2-x_1^2  -3x_2^2+2x_3^2.$$ 
Then $\varphi(Y)$ is contained in the hyperplane
$H_8=\{Z_4-Z_5-Z_6+Z_7\}$, which we can see as the image of the morphism $i : \mathbb{P}^8 \hookrightarrow \mathbb{P}^9$ such that
$$\begin{tikzcd}
\left[w_0 : \dots : w_8 \right] \arrow[r, mapsto, "i"] & \left[w_0:w_1:w_2:w_3:w_4+w_5-w_6:w_4:w_5:w_6:w_7:w_8\right].
\end{tikzcd}$$
Thanks to Macaulay2, one can verify that we obtain a BS-EF 3-fold $W_{BS}^{8} \subset H_8 \cong \mathbb{P}^8$ whose ideal is generated by the following $11$ polynomials of degree $2$ or $3$
\begin{center}
${w}_{5} {w}_{6}-{w}_{8}^{2},\quad 
{w}_{2} {w}_{6}-{w}_{3}{w}_{8}, \quad
{w}_{3} {w}_{5}-{w}_{2} {w}_{8},\quad
{w}_{4}^{2}+{w}_{4}{w}_{5}-{w}_{4} {w}_{6}-{w}_{7}^{2},$

${w}_{1} {w}_{4}+{w}_{1}{w}_{5}-{w}_{1} {w}_{6}-{w}_{0} {w}_{7},\quad
{w}_{0} {w}_{4}-{w}_{1}{w}_{7},$

${w}_{0}^{2}-{w}_{1}^{2}-{w}_{2}^{2}+{w}_{3}^{2}-4 {w}_{4}{w}_{5}+4 {w}_{5}^{2}+4 {w}_{4} {w}_{6}-4 {w}_{8}^{2},$

${w}_{2} {w}_{3}{w}_{7}-{w}_{0} {w}_{1} {w}_{8}+4 {w}_{4} {w}_{7} {w}_{8}-4 {w}_{5}{w}_{7} {w}_{8},$

${w}_{0} {w}_{1} {w}_{6}-{w}_{3}^{2} {w}_{7}-4 {w}_{4}{w}_{6} {w}_{7}+4 {w}_{7} {w}_{8}^{2},$

${w}_{3}^{2} {w}_{4}-{w}_{1}^{2}{w}_{6}+4 {w}_{4} {w}_{6}^{2}+4 {w}_{6} {w}_{7}^{2}-8 {w}_{4}{w}_{8}^{2},$

${w}_{2} {w}_{3} {w}_{4}-{w}_{1}^{2} {w}_{8}-8 {w}_{4} {w}_{5}{w}_{8}+4 {w}_{4} {w}_{6} {w}_{8}+4 {w}_{7}^{2} {w}_{8}.$
\end{center}
This threefold $W_{BS}^{8}$ 
has the following eight singular points:
$$P_1 := i^{-1}(\varphi (p_1)) = \varphi (\left[1 : 1: 0:0:1\right]) = 
\left[2:2:0:0:1:0:0:1:0\right],$$
$$P_2 := i^{-1}(\varphi (p_2)) = \varphi (\left[-1 : -1: 0:0:1\right]) =
\left[-2:-2:0:0:1:0:0:1:0\right],$$
$$P_3 := i^{-1}(\varphi (p_3)) = \varphi (\left[0 : 0: 1:1:1\right]) =
\left[0:0:2:2:0:1:1:0:1\right],$$
$$P_4 := i^{-1}(\varphi (p_4)) = \varphi (\left[0 : 0: -1:1:1\right]) =
\left[0:0:-2:-2:0:1:1:0:1\right],$$
$$P_1' := i^{-1}(\varphi (p_1')) = \varphi (\left[-1 : 1: 0:0:1\right]) =
\left[2:-2:0:0:-1:0:0:1:0\right],$$
$$P_2' := i^{-1}(\varphi (p_2')) = \varphi (\left[1 : -1: 0:0:1\right]) =
\left[-2:2:0:0:-1:0:0:1:0\right],$$
$$P_3' := i^{-1}(\varphi (p_3')) = \varphi (\left[0 : 0: -1:-1:1\right]) =
\left[0:0:2:-2:0:-1:-1:0:1\right],$$
$$P_4' := i^{-1}(\varphi (p_4')) = \varphi (\left[0 : 0: 1:-1:1\right]) =
\left[0:0:-2:2:0:-1:-1:0:1\right].$$
Let $l_{i,j}$ be the line joining $P_i$ and $P_j$ for $i,j \in \{1,2,3,4,1',2',3',4'\}$ and $i \ne j$. We have that $W_{BS}^{8}$ does not contain the lines
$l_{1,1'}$, $l_{1,2'}$, $l_{2,1'}$, $l_{2,2'}$, $l_{3,3'}$, $l_{3,4'}$, $l_{4,3'}$, $l_{4,4'},$ 
while it contains the others. So each one of the eight singular points of $W_{BS}^{8}$ is associated with $m=5$ of the other singular points, as in 
Table~\ref{tab:BS810} of Appendix~\ref{app:congifuration}. Hence there exist three mutually associated points (for example $P_1$, $P_2$ and $P_3$). This case had been excluded by Fano for $p>7$ (see \cite[\S 5]{Fa38}). So this suggests that in Fano's paper there are other gaps to be discovered.

\begin{theorem}\label{thm:B8 linear system}
Let $T$ be a trihedron with edges $l_0,l_1,l_2$ and vertex $v$ as in Figure~\ref{fig:baseL(N)-bayle8}. Let us choose a general point $q_1 \in l_1$, a general point $q_2\in l_2$, three distinct points $a_r,\,a_s,\,a_t\in l_0$, a general point $b_1\in r_1 :=\left\langle q_1, a_r\right\rangle$ and a general point $b_2\in r_2 :=\left\langle q_2, a_r\right\rangle$. Let us take a general conic $C$ through the points $q_1$, $q_2$, $b_1$, $b_2$, in the plane spanned by the three points $a_r$, $q_1$, $q_2$. Finally let us consider the lines
$s_1 :=\left\langle q_1, a_s\right\rangle$, $s_2 :=\left\langle q_2, a_s\right\rangle$, $t_1 :=\left\langle b_1, a_t\right\rangle$, $t_2 :=\left\langle b_2, a_t\right\rangle$ and the lines $l_1' :=\left\langle q_1', q_2\right\rangle$ and $l_2' :=\left\langle q_2', q_1\right\rangle$, where $q_1'$ is a general point on $t_1$ and $q_2'$ a general point on $t_2$. Then the BS-EF 3-fold $W_{BS}^{8}$ can be obtained as the image of $\mathbb{P}^3$ via the rational map 
$\nu_{\mathcal{N}} : \mathbb{P}^3 \dashrightarrow \mathbb{P}^8$ defined by the linear system $\mathcal{N}$ of the septic surfaces of $\mathbb{P}^3$ which are quadruple at the points $q_1$ and $q_2$, triple at the vertex $v$, and double along the lines $l_0$, $l_1$, $l_2$, $l_1'$, $l_2'$, along the conic $C$ and at the points $c_1:=t_1\cap s_1$ and $c_2:=t_2 \cap s_2$. Furthermore, a general element of $\mathcal{N}$ contains the lines $t_1$, $t_2$, $r_1$, $r_2$, $s_1$, $s_2$ and $e_0 := \left\langle q_1,q_2\right\rangle$.
\begin{figure}[ht]
\centering
\includegraphics[scale=0.4]{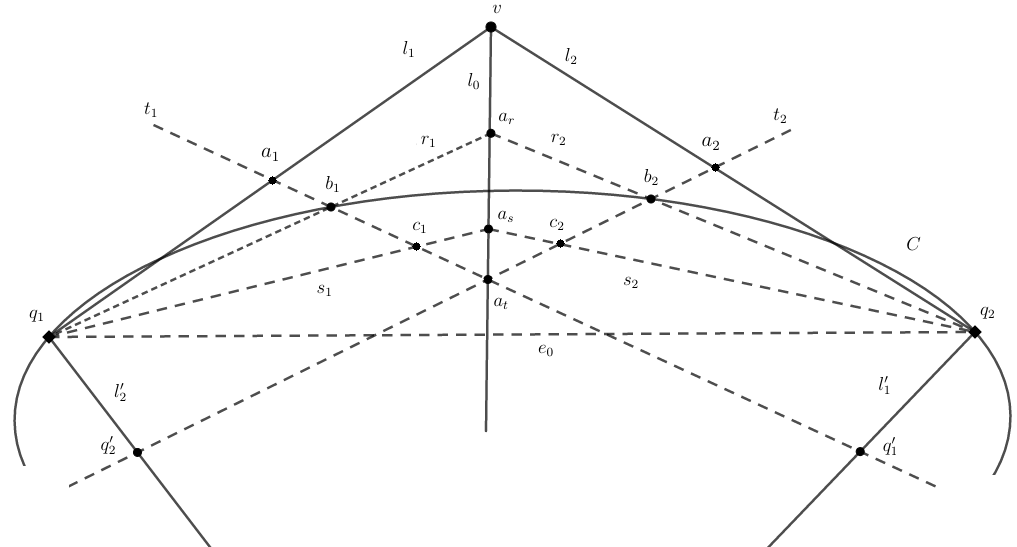}
\caption{\label{fig:baseL(N)-bayle8}\scriptsize{Base locus of the linear system $\mathcal{N}$.}}
\end{figure}
\end{theorem}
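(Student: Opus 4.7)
The strategy I would pursue mirrors the method used in \S~\ref{subsec:bayle13} (and \S~\ref{subsec:bayle9}) to identify $W_{BS}^{13}$ with $W_F^{13}$: realize $W_{BS}^{8}$ as the image of a suitable rational map from $\mathbb{P}^3$ defined by a linear system on $\mathbb{P}^3$, then extract the base locus of that system and match it with the configuration described by $T$, $C$, $q_1,q_2$, $l_1',l_2'$ and $c_1,c_2$. Since the statement is of a computational nature, I would work inside the explicit example of \S~\ref{ex: ideal bayle8}, relying on Macaulay2 to carry out the ideal-theoretic verifications and to identify the base scheme.

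First, I would construct the inverse $\nu_{\mathcal{N}}^{-1}\colon W_{BS}^{8}\dashrightarrow \mathbb{P}^3$ as a suitable projection. Since $\dim W_{BS}^{8}=3$ and we want a birational map to $\mathbb{P}^3$, we must project $\mathbb{P}^8$ from a $\mathbb{P}^4$ chosen so that the generic fibre on $W_{BS}^{8}$ is a single point. A natural candidate is the $\mathbb{P}^4$ spanned by five of the eight singular points of $W_{BS}^{8}$; the associations (i.e.\ the lines $l_{i,j}\subset W_{BS}^{8}$) recorded at the end of \S~\ref{ex: ideal bayle8} suggest which quintuples to try, and Macaulay2 can check birationality by computing the inverse. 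This yields a rational map $\rho\colon W_{BS}^{8}\dashrightarrow \mathbb{P}^3$, whose inverse is a rational map $\nu\colon \mathbb{P}^3\dashrightarrow W_{BS}^{8}\subset \mathbb{P}^8$ given by a tuple of nine homogeneous polynomials of some degree $d$ in the coordinates $[t_0:t_1:t_2:t_3]$.

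Next, I would use Macaulay2 to read off these nine polynomials explicitly, verify that $d=7$, and analyze their common base locus. Saturating the ideal generated by the nine septics against the irrelevant ideal and decomposing it into primary components should produce exactly the data described in the theorem: a trihedron $T=l_0\cup l_1\cup l_2$ with vertex $v$, two points $q_1\in l_1$, $q_2\in l_2$, three points $a_r,a_s,a_t$ on $l_0$, the auxiliary lines $r_i,s_i,t_i$ and $l_1',l_2'$, the conic $C$ and the two points $c_1,c_2$. The multiplicities (quadruple at $q_1,q_2$, triple at $v$, double along $l_0,l_1,l_2,l_1',l_2',C$ and at $c_1,c_2$) would be verified by computing local multiplicities from the Jacobian ideal at each component, or by restricting $\nu_{\mathcal{N}}$ to a general line through each base locus stratum and reading off the vanishing order.

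The most delicate step will be matching the abstract geometric description of $\mathcal{N}$ in the statement with whatever Macaulay2 actually produces from the particular affine chart and coordinate choice. To close this gap, I would check dimensions: the linear system of septics with those prescribed multiplicities has expected dimension equal to $\binom{7+3}{3}-1$ minus the sum of the local conditions imposed, and this expected dimension must be at least $8$ to map to $\mathbb{P}^8$; a direct enumeration (counting $84$ monomials of degree $7$ minus the $\binom{4+2}{3}=20$ conditions at each of $q_1,q_2$, the $\binom{3+2}{3}=10$ conditions at $v$, $3$ conditions along each double line of appropriate length, and so on) should yield exactly $8$, confirming that $\mathcal{N}$ has projective dimension $8$ and no additional unwanted base conditions. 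Finally, the claim about a general element also containing the lines $t_1,t_2,r_1,r_2,s_1,s_2$ and $e_0$ would be verified by evaluating the nine septics on parametrizations of those lines and checking that the restriction vanishes identically, which Macaulay2 handles in a single substitution; this step fixes the geometric picture in Figure~\ref{fig:baseL(N)-bayle8}.
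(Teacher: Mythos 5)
Your proposal follows essentially the same route as the paper: project $\mathbb{P}^8$ from the $\mathbb{P}^4$ spanned by five of the singular points (the paper uses $P_1,P_1',P_2,P_3,P_3'$ from Example~\ref{ex: ideal bayle8}), verify birationality of the restriction to $W_{BS}^8$ with Macaulay2, compute the inverse as a system of septics, decompose its base locus into the components $l_0,l_1,l_2,l_1',l_2',C,r_i,s_i,t_i,e_0$, and certify the prescribed multiplicities by Jacobian-ideal checks on a general member. The only addition in your write-up is the expected-dimension count for $\mathcal{N}$, which the paper replaces by the direct computational verification that the image of $\nu_{\mathcal{N}}$ equals $W_{BS}^8$.
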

\begin{proof}
Let us project $\mathbb{P}^8$ from the $\mathbb{P}^4$ spanned by the singular points $P_1$, $P_1'$, $P_2$, $P_3$ and $P_3'$ of the BS-EF 3-fold $W_{BS}^8$ of Example~\ref{ex: ideal bayle8}. 
By using Macaulay2, we obtain the rational map 
$\rho: \mathbb{P}^{8} \dashrightarrow \mathbb{P}^{3}_{\left[z_0:\dots :z_3 \right]}$ such that
$$\left[w_0 : \dots : w_{13} \right] \mapsto \left[w_2-2w_8: w_5-w_6: w_3-2w_6: w_0-w_1+2w_4-2w_7 \right].$$
One can verify, via Macaulay2, that the restriction $\rho|_{ W_{BS}^{8}} : W_{BS}^{8} \dashrightarrow \mathbb{P}^{3}$ is birational and that its inverse map is given by the rational map
$\nu_{\mathcal{N}} : \mathbb{P}^{3} \dashrightarrow  W_{BS}^{8} \subset \mathbb{P}^{8}$ defined by the linear system 
$\mathcal{N}$ 
of the septic surfaces
\begin{itemize}
\item[(i)] quadruple at $q_1=\left[1:0:-2:0\right]$ and $q_2=\left[1:0:2:0\right]$;
\item[(ii)] triple at the vertex $v=\left[0:0:0:1\right]$ of the following trihedron 
$$T=\{z_1(2z_0+z_2)(2z_0-z_2)=0\};$$
\item[(iii)] double at the points
$c_1=\left[1:-2:-2:0\right]$ and
$c_2=\left[1:2:2:0\right]$; double along the line
$l_1' = \{z_3=2z_0+2z_1-z_2=0\}\ni
q_1'=\left[1:-2:-2:0\right]$
and along the line $l_2'=\{z_3=2z_0-2z_1+z_2=0\}\ni q_2'=\left[1:2:2:0\right]$;
double along the edges $l_0=\{z_0=z_2=0\}$, $l_1=\{z_1=2z_0+z_2=0\}$, $l_2=\{z_1=2z_0-z_2=0\}$ of the trihedron $T$; double along the conic
$C=\{2z_1+z_3=4z_0^2-z_2^2-2z_2z_3-2z_3^2=0\}$ passing through $q_1$, $q_2$,
$b_1=\left[1:-1:-2:2\right]$ and
$b_2=\left[1:1:2:-2\right]$;
\item[(iv)] containing the lines
$r_1=\{2z_1+z_3=2z_0+z_2=0\}$,
$r_2=\{2z_1+z_3=2z_0-z_2=0\}$,
$s_1=\{z_3=2z_0+z_2=0\}$,
$s_2=\{z_3=2z_0-z_2=0\}$,
$t_1=\{2z_1-2z_2-z_3=2z_0+z_2=0\}$,
$t_2=\{2z_1-2z_2-z_3=2z_0-z_2=0\}$.
\end{itemize}

\end{proof}


\section{The BS-EF 3-fold of genus 10}\label{subsec:bayle10}

Let us study the Enriques-Fano threefold described in \cite[\S 6.5.1]{Ba94} (see also \cite[Theorem 1.1 No.13]{Sa95}). We refer to \S~\ref{code:bayle10} of Appendix~\ref{app:code} for the computational techniques we will use. Let us take the smooth Fano threefold $X := \mathbb{P}^{1}\times S_6$, where 
$S_6$ is a smooth sextic Del Pezzo surface. We recall that $S_6$ is the image of $\mathbb{P}^2$ via the rational map $\lambda :  \mathbb{P}^2 \dashrightarrow \mathbb{P}^{6}$ defined by the linear system of the plane cubic curves passing through three fixed points $a_1$, $a_2$, $a_3$ in general position. Up to a change of coordinates, we may assume 
$a_1=[1:0:0]$, $a_2=[0:1:0]$, $a_3=[0:0:1]$ and 
$$\lambda : \left[u_0 : u_1 : u_2\right] \mapsto \left[ u_1^2 u_2 : u_1 u_2^2 : u_0^2 u_2 : u_0 u_2^2 : u_0^{2} u_1 : u_0 u_1^2 : u_0 u_1 u_2 \right].$$
Thanks to Macaulay2, 
we can say that $S_6=\lambda\left(\mathbb{P}^{2}\right)\subset \mathbb{P}^{6}_{[x_0:x_1:x_2:x_3:x_4:x_5:x_6]}$ has ideal generated by the following polynomials 
$$x_3x_5-x_6^2,\quad x_2x_5-x_4x_6,\quad x_1x_5-x_0x_6,\quad x_3x_4-x_2x_6,\quad x_1x_4-x_6^2,$$
$$x_0x_4-x_5x_6,\quad x_0x_3-x_1x_6,\quad x_0x_2-x_3x_6,\quad x_0x_2-x_6^2.$$
The quadratic transformation $q_{a_1,a_2,a_3} : \mathbb{P}^2 \dashrightarrow \mathbb{P}^2$, given by the linear system of the conics passing through $a_1$, $a_2$ and $a_3$, defines an involution of the above sextic Del Pezzo surface.
Indeed, we have
\begin{footnotesize}
$$\begin{tikzcd}
\left[u_0 : u_1 : u_2\right] \arrow[r, mapsto, "q_{a_1,a_2,a_3}"] \arrow[d, mapsto, "\lambda"] & \left[\frac{1}{u_0} : \frac{1}{u_1} : \frac{1}{u_2}\right] \arrow[dd, mapsto, "\lambda"]\\
\hspace{-1cm}\left[u_1^2 u_2 : u_1 u_2^2 : u_0^2 u_2 : u_0 u_2^2 : u_0^{2} u_1 : u_0 u_1^2 : u_0 u_1 u_2\right]  & \\
& \hspace{-0.7cm}\left[u_0^2 u_2 : u_1 u_0^2 : u_1^2 u_2 : u_0 u_1^2 : u_1 u_2^2 : u_0 u_2^2 : u_0 u_1 u_2\right],
\end{tikzcd}$$
\end{footnotesize}
and then we obtain the involution $t'$ of $\mathbb{P}^{6}$ given by
$$\begin{tikzcd}
\hspace{-1cm}\left[x_0 : x_1 : x_2 : x_3 : x_4 : x_5 : x_6\right] \arrow[r, mapsto, "t' "] & \left[x_2 : x_4 : x_0 : x_5 : x_1 : x_3 : x_6\right].
\end{tikzcd}$$
The locus of $t'$-fixed points of $\mathbb{P}^{6}$ consists of two projective subspaces
$$F_1 := \{x_0+x_2 = x_1+x_4 = x_3+x_5 = x_6 = 0\}\cong \mathbb{P}^{2},$$
$$F_{2} := \{x_0-x_2 = x_1-x_4 = x_3-x_5= 0\} \cong \mathbb{P}^{3}.$$
In particular we have $F_1\cap S_6 = \emptyset$ and $F_{2}\cap S_6 = \{d_1,d_2,d_3,d_4\}$, where
$$d_1 := \left[1:1:1:1:1:1:1\right], d_2 :=\left[1:-1:1:-1:-1:-1:1\right],$$
$$d_3 := \left[-1:1:-1:-1:1:-1:1\right], d_4 := \left[-1:-1:-1:1:-1:1:1\right].$$
Thus, $\sigma_2 := t' |_{S_6}$ is an involution of $S_6$ with four fixed points.  
We also consider the involution of $\mathbb{P}^1$ with two fixed points $\left[0:1\right]$ and $\left[1:0\right]$, that is the map $\sigma_1 : \mathbb{P}^{1} \to \mathbb{P}^{1}$ given by
$\left[y_0 : y_1\right] \mapsto \left[y_0 : -y_1 \right]$. 
Therefore the map $\sigma:= (\sigma_1 \times \sigma_2) : X \to X$ is an involution of $X$ having eight fixed points $p_1$, $p_2$, $p_3$, $p_4$, $p_1'$, $p_2'$, $p_3'$, $p_4'$, where 
$$p_i := \left[0 : 1\right] \times d_i, \quad
p_i' = \left[1 : 0\right] \times d_i,\quad i=1,2,3,4.$$ 
The quotient map $\pi : X \to X / \sigma = : W_{BS}^{10}$ is given by the restriction on $X$ of the morphism $\varphi : \mathbb{P}^{1}\times \mathbb{P}^6 \to \mathbb{P}^{10}_{\left[w_0 : \dots : w_{10}\right]}$ defined by the $(\sigma_{1} \times t')$-invariant multihomogeneous polynomials of multidegree $(2,1)$, i.e.
$\varphi : \left[y_0 : y_1\right] \times \left[x_0 : \dots : x_6\right] \mapsto \left[w_0 : \dots : w_{10}\right]$
where
$$w_0 = y_0^2x_6,\, w_1 = y_0^2(x_0+x_2),\, w_2 = y_0^2(x_1+x_4),\, w_3 = y_0^2(x_3+x_5),$$
$$w_4 = y_1^2x_6,\,w_5 = y_1^2(x_0+x_2),\, w_6 = y_1^2(x_1+x_4),\, w_7 = y_1^2(x_3+x_5),$$ 
$$w_8 = y_0y_1(x_0-x_2),\, w_9 =y_0y_1(x_1-x_4),\, w_{10} = y_0y_1(x_3-x_5).$$
Thanks to Macaulay2, one can find that the BS-EF 3-fold $W_{BS}^{10}$ has ideal generated by the following $20$ polynomials of degree $2$ or $3$
\begin{center}
${w}_{7} {w}_{8}-2 {w}_{4} {w}_{9}+{w}_{5}{w}_{10},\quad
{w}_{6} {w}_{8}-{w}_{5} {w}_{9}+2 {w}_{4} {w}_{10},\quad
2{w}_{4}{w}_{8}-{w}_{7} {w}_{9}+{w}_{6} {w}_{10},$

${w}_{3} {w}_{8}-2 {w}_{0}{w}_{9}+{w}_{1} {w}_{10},\quad
{w}_{2} {w}_{8}-{w}_{1} {w}_{9}+2 {w}_{0}{w}_{10},\quad
2 {w}_{0} {w}_{8}-{w}_{3} {w}_{9}+{w}_{2} {w}_{10},$

${w}_{3}{w}_{6}-{w}_{2} {w}_{7},\quad
{w}_{2} {w}_{6}-{w}_{3}{w}_{7}-{w}_{9}^{2}+{w}_{10}^{2},\quad
{w}_{1} {w}_{6}-2 {w}_{0}{w}_{7}-{w}_{8} {w}_{9},$

$2 {w}_{0} {w}_{6}-{w}_{1} {w}_{7}-{w}_{8}{w}_{10},\quad
{w}_{3} {w}_{5}-{w}_{1} {w}_{7},\quad
{w}_{2} {w}_{5}-2 {w}_{0}{w}_{7}-{w}_{8} {w}_{9},$

${w}_{1} {w}_{5}-{w}_{3}{w}_{7}-{w}_{8}^{2}+{w}_{10}^{2},\quad
2 {w}_{0} {w}_{5}-{w}_{2}{w}_{7}+{w}_{9} {w}_{10},\quad
{w}_{3} {w}_{4}-{w}_{0} {w}_{7},$

$2 {w}_{2}{w}_{4}-{w}_{1} {w}_{7}-{w}_{8} {w}_{10},\quad
2 {w}_{1} {w}_{4}-{w}_{2}{w}_{7}+{w}_{9} {w}_{10},\quad
4 {w}_{0} {w}_{4}-{w}_{3} {w}_{7}+{w}_{10}^{2},$

$4{w}_{4}^{3}-{w}_{4} {w}_{5}^{2}-{w}_{4} {w}_{6}^{2}+{w}_{5} {w}_{6}{w}_{7}-{w}_{4} {w}_{7}^{2},\quad 
4 {w}_{0}^{3}-{w}_{0} {w}_{1}^{2}-{w}_{0}{w}_{2}^{2}+{w}_{1} {w}_{2} {w}_{3}-{w}_{0} {w}_{3}^{2}$.
\end{center}
The eight singular points of $W_{BS}^{10}$ are $P_i:=\pi (p_i)$ and $P_i':=\pi (p_i')$, for $1\le i \le 4$, that is
$$P_1 = \left[0:0:0:0:1:2:2:2:0:0:0\right],\, 
P_1' = \left[1:2:2:2:0:\dots:0\right]$$
$$P_2 = \left[0:0:0:0:1:2:-2:-2:0:0:0\right],\,
P_2' = \left[1:2:-2:-2:0:\dots :0\right],$$
$$P_3 = \left[0:0:0:0:1:-2:2:-2:0:0:0\right],\,
P_3' = \left[1:-2:2:-2:0:\dots :0\right],$$
$$P_4 = \left[0:0:0:0:1:-2:-2:2:0:0:0\right],\,
P_4' = \left[1:-2:-2:2:0:\dots :0\right].$$
Let $l_{i,j}$ be the line joining $P_i$ and $P_j$ for $i,j \in \{1,2,3,4,1',2',3',4'\}$ and $i \ne j$. 
We have that $W_{BS}^{10}$ contains the lines
$l_{1,2}$, $l_{1,3}$, $l_{1,4}$, $l_{1,1'}$, $l_{2,3}$, $l_{2,4}$, $l_{2,2'}$, $l_{3,4}$, $l_{3,3'}$, $l_{4,4'}$, $l_{1',2'}$, $l_{1',3'}$, $l_{1',4'}$, $l_{2',3'}$, $l_{2',4'}$, $l_{3',4'}$, 
while it does not contain the others. So each one of the eight singular points of $W_{BS}^{10}$ is associated with $m=4$ of the other singular points, as in 
Table~\ref{tab:BS810} of Appendix~\ref{app:congifuration}. Hence there exist three mutually associated points (for example $P_1$, $P_2$ and $P_3$). This case had been excluded by Fano for $p>7$ (see \cite[\S 5]{Fa38}). So this suggests that in Fano's paper there are other gaps to be discovered.

\begin{theorem}\label{thm:B10linearsystem}
Let $T\subset \mathbb{P}^3$ be a tetrahedron with faces $f_i$ and edges $l_{ij}:=f_i\cap f_j$ for $0\le i<j\le 3$. Let $v_i$ be the vertex opposite to the face $f_i$, for $0\le i\le 3$. 
Let $\pi$ be a plane through the vertex $v_0$, which intersects the face $f_i$ along a line $r_i$, for $1\le i\le 3$, and let us define the point $q_i:=r_i\cap l_{0i}$ (see Figure~\ref{fig:baseL(M)-bayle10}). Then $W_{BS}^{10}$ can be obtained as the image of $\mathbb{P}^3$ via the rational map $\nu_{\mathcal{M}} : \mathbb{P}^3 \dashrightarrow \mathbb{P}^{10}$ defined by the linear system $\mathcal{M}$ of the sextic surfaces of $\mathbb{P}^3$ which are quadruple at the vertex $v_0$, triple at the other three vertices $v_1$, $v_2$, $v_3$, and double along the three lines $r_1$, $r_2$, $r_3$. Furthemore a general element of $\mathcal{M}$ also contains the six edges of $T$.
\begin{figure}[ht]
\centering
\includegraphics[scale=0.5]{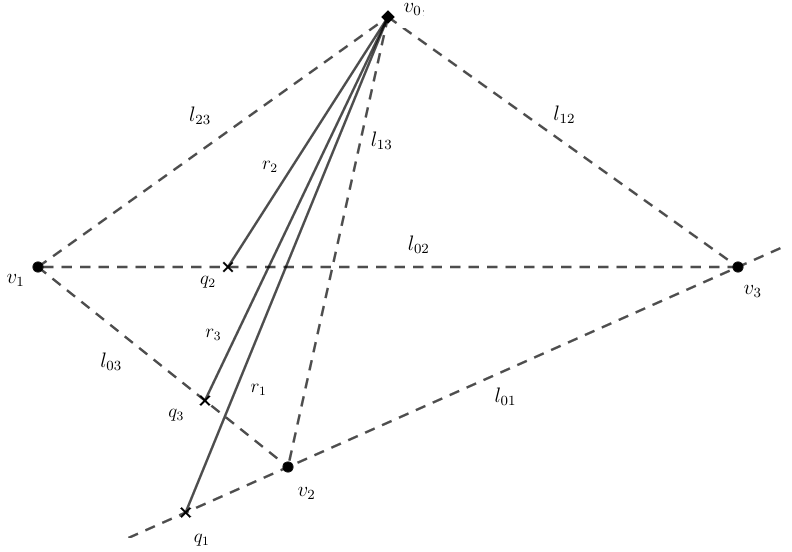}
\caption{\label{fig:baseL(M)-bayle10}\scriptsize{Base locus of the linear system $\mathcal{M}$.}}
\end{figure}
\end{theorem}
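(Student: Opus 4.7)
The plan is to mirror the proof of Theorem~\ref{thm:B8 linear system}. First, I would project $\mathbb{P}^{10}$ from the $\mathbb{P}^{6}$ spanned by seven of the eight singular points of $W_{BS}^{10}$, obtaining a rational map $\rho : \mathbb{P}^{10} \dashrightarrow \mathbb{P}^{3}$, and verify with Macaulay2 that the restriction $\rho|_{W_{BS}^{10}}$ is birational onto $\mathbb{P}^{3}$. The inverse $\nu_{\mathcal{M}} : \mathbb{P}^{3} \dashrightarrow W_{BS}^{10} \subset \mathbb{P}^{10}$ is then produced directly by the software; one checks that its eleven components are homogeneous sextic polynomials with no common factor, so that it corresponds to a linear system $\mathcal{M}$ of sextic surfaces in $\mathbb{P}^{3}$.

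Next, I would identify the base locus of $\mathcal{M}$ by computing the saturation of the ideal generated by the eleven sextics and decomposing it into its primary components. A suitable choice of the seven projected singular points, combined with a well-chosen change of coordinates on $\mathbb{P}^{3}$, should make four of the isolated base points appear as the coordinate vertices of $\mathbb{P}^{3}$; these are the vertices $v_{0},v_{1},v_{2},v_{3}$ of the tetrahedron $T$. From the explicit formulas for $\nu_{\mathcal{M}}$, one then reads off that $\mathcal{M}$ has multiplicity $4$ at $v_{0}$ and multiplicity $3$ at each of $v_{1},v_{2},v_{3}$, while the remaining positive-dimensional components are three lines $r_{1},r_{2},r_{3}$ along which every sextic has multiplicity $2$.

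At this point I would verify, still in Macaulay2 but essentially by linear algebra, that $r_{1},r_{2},r_{3}$ are coplanar, concurrent at $v_{0}$, and that each $r_{i}$ is contained in the face $f_{i}$, so that $q_{i}:=r_{i}\cap l_{0i}$ is a single point different from $v_{0}$; this is exactly the geometric configuration described in the statement. The last assertion, that a general element of $\mathcal{M}$ also contains the six edges of $T$, is obtained by checking that the restriction of a generic section of $\mathcal{M}$ to each edge $l_{ij}$ vanishes identically, even though the edges are not in the scheme-theoretic base locus with positive multiplicity.

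The main obstacle will be, as in Theorem~\ref{thm:B8 linear system}, converting the raw Macaulay2 output into the neat geometric picture of Figure~\ref{fig:baseL(M)-bayle10}: the ideal of the base scheme must be decomposed, the embedded and isolated components correctly identified with their multiplicities, and the auxiliary plane $\pi=\langle r_{1},r_{2},r_{3}\rangle$ recognized as the unique plane through $v_{0}$ containing the three double lines. A careful preliminary choice of the subset of projecting singular points and of the coordinates on the target $\mathbb{P}^{3}$ will be essential so that the tetrahedron $T$ and the plane $\pi$ emerge transparently from the formulas for $\nu_{\mathcal{M}}$.
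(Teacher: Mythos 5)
Your proposal follows essentially the same route as the paper: the paper projects $\mathbb{P}^{10}$ from the $\mathbb{P}^6$ spanned by the seven singular points $P_1,P_2,P_3,P_4,P_1',P_2',P_3'$, checks with Macaulay2 that the restriction to $W_{BS}^{10}$ is birational, computes the inverse map, and reads off from the decomposition of the base ideal the sextic linear system with the quadruple vertex $v_0$, the triple vertices $v_1,v_2,v_3$, the three coplanar double lines $r_1,r_2,r_3$, and the six edges of $T$ in the base locus. The only cosmetic difference is that in the paper's coordinates the vertices $v_1,v_2,v_3$ are not the coordinate points of $\mathbb{P}^3$, but this does not affect the argument.
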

\begin{proof}
Let us project $\mathbb{P}^{10}$ from the $\mathbb{P}^6$ spanned by the singular points $P_1$, $P_2$, $P_3$, $P_4$, $P_1'$, $P_2'$ and $P_3'$ of $W_{BS}^{10}$. 
By using Macaulay2, we obtain the rational map
$$\rho: \mathbb{P}^{10} \dashrightarrow \mathbb{P}^{3}_{\left[z_0:\dots :z_3 \right]}, \quad \left[w_0 : \dots : w_{13} \right] \mapsto \left[-2w_0+w_1+w_2-w_3: w_8: w_9: w_{10} \right].$$
Thanks to Macaulay2, we see that the restriction $\rho|_{ W_{BS}^{10}} : W_{BS}^{10} \dashrightarrow \mathbb{P}^{3}$ is birational. 
We 
also compute its inverse map, which is 
the rational map
$\nu_{\mathcal{M}} : \mathbb{P}^{3} \dashrightarrow  W_{BS}^{10} \subset \mathbb{P}^{10}$
defined by the linear system 
$\mathcal{M}$ 
of the sextic surfaces
\begin{itemize}
\item[(i)] containing the six edges
$l_{23} =\{z_1=z_2-z_3=0\}$, $l_{13} =\{z_3=z_1+z_2=0\}$, $l_{12} =\{z_2=z_1+z_3=0\}$,
$l_{01} =\{z_0=z_1+z_2+z_3=0\}$, $l_{03} =\{z_0=z_1-z_2+z_3=0\}$ and $l_{02} =\{z_0=z_1+z_2-z_3=0\}$
of the tetrahedron $T$
with faces
$f_{0} =\{z_0=0\}$, $f_{1} =\{z_1+z_2+z_3=0\}$, $f_2=\{z_1-z_2+z_3=0\}$ and $f_{3} =\{z_1+z_2-z_3=0\}$;
\item[(ii)] double along the lines 
$r_1 =\{z_1=z_2+z_3=0\}$, $r_2 =\{z_3=z_1-z_2=0\}$ and $r_3 =\{z_2=z_1-z_3=0\}$ contained in the plane $\pi =\{z_1-z_2-z_3=0\},$
and obviously double at the points $q_1=\left[0:0:-1:1\right]$, $q_2=\left[0:1:1:0\right]$, $q_3=\left[0:1:0:1\right];$
\item[(iii)] triple at the following vertices of $T$
$$v_1=\left[0:0:1:1\right], \, v_2=\left[0:1:-1:0\right], \, v_3=\left[0:1:0:-1\right];$$
\item[(iv)] and quadruple at the vertex
$v_0=\left[1:0:0:0\right].$
\end{itemize}
\end{proof}


\section{The P-EF 3-fold of genus 17}\label{subsec:pro17}

Let us study the Enriques-Fano threefold described in \cite[\S 3]{Pro07}. We refer to \S~\ref{code:pro17} of Appendix~\ref{app:code} for the computational techniques we will use.
Let $P$ be the octic Del Pezzo surface given by the image of the anticanonical embedding of $\mathbb{P}^{1}\times \mathbb{P}^{1}$ in $\mathbb{P}^{8}$, which is defined by the linear system of the divisors of bidegree $(2,2)$, i.e.
$\left[u_0 : u_1\right] \times \left[v_0 : v_1\right]  \mapsto  \left[y_{0,0}:y_{0,1}:y_{0,2},y_{1,0}:y_{1,1}:y_{1,2}:y_{2,0}:y_{2,1}:y_{2,2}\right],$
where $y_{i,j}:=u_0^{i}u_1^{2-i}v_0^{j}v_1^{2-j}$. Let us consider $\mathbb{P}^8$ as the hyperplane $\{ x=0 \}$ in $\mathbb{P}^9_{[y_{0,0}:y_{0,1}:y_{0,2}:y_{1,0}:y_{1,1}:y_{1,2}:y_{2,0}:y_{2,1}:y_{2,2}:x]}$. Let $V$ be the cone over $P$ with vertex at the point $v:=[0:0:0:0:0:0:0:0:0:1]$; then $V$ is a singular Fano threefold
(see \cite[Lemma 3.1]{Pro07}).
By using Macaulay2, we 
see that the ideal of $V$ is generated by the following polynomials
\begin{center}
$ {y}_{2,1}^{2}-{y}_{2,0}{y}_{2,2},\quad 
{y}_{1,2} {y}_{2,1}-{y}_{1,1}{y}_{2,2},\quad 
{y}_{1,1} {y}_{2,1}-{y}_{1,0}{y}_{2,2}, \quad 
{y}_{0,2} {y}_{2,1}-{y}_{0,1}{y}_{2,2},$

${y}_{0,1} {y}_{2,1}-{y}_{0,0}{y}_{2,2},\quad 
{y}_{1,2} {y}_{2,0}-{y}_{1,0}{y}_{2,2}, \quad
{y}_{1,1} {y}_{2,0}-{y}_{1,0}{y}_{2,1}, \quad
{y}_{0,2} {y}_{2,0}-{y}_{0,0}{y}_{2,2},$

${y}_{0,1} {y}_{2,0}-{y}_{0,0}{y}_{2,1}, \quad
{y}_{1,2}^{2}-{y}_{0,2} {y}_{2,2},\quad
{y}_{1,1}{y}_{1,2}-{y}_{0,1} {y}_{2,2}, \quad
{y}_{1,0}{y}_{1,2}-{y}_{0,0} {y}_{2,2},$

${y}_{1,1}^{2}-{y}_{0,0}{y}_{2,2},\quad
{y}_{1,0} {y}_{1,1}-{y}_{0,0}{y}_{2,1},\quad
{y}_{0,2} {y}_{1,1}-{y}_{0,1}{y}_{1,2}, \quad
{y}_{0,1} {y}_{1,1}-{y}_{0,0}{y}_{1,2},$

${y}_{1,0}^{2}-{y}_{0,0} {y}_{2,0},\quad
{y}_{0,2}{y}_{1,0}-{y}_{0,0} {y}_{1,2},\quad
{y}_{0,1}{y}_{1,0}-{y}_{0,0} {y}_{1,1}, \quad
{y}_{0,1}^{2}-{y}_{0,0}{y}_{0,2}.$
\end{center}
Let us consider the involution $t$ of $\mathbb{P}^9$ given by
$t(x)=-x$ and $t(y_{i,j})=(-1)^{i+j}y_{i,j}$.
Let $v_{i,j}:=\{x=0, y_{k,l}=0 | (k,l)\ne (i,j)\}$. The locus of $t$-fixed points in $\mathbb{P}^{9}$ consists of two projective subspaces
$F_{1} = \{y_{0,0}=y_{0,2}=y_{1,1}=y_{2,0}=y_{2,2} = 0\} \cong \mathbb{P}^{4}$ and
$F_2 = \{y_{0,1}=y_{1,0}=y_{1,2}=y_{2,1}=x = 0\}\cong \mathbb{P}^{4}$
such that $F_{1}\cap V = \{v\}$ and $F_2\cap V = \{v_{0,0},v_{0,2},v_{2,0},v_{2,2}\}$.
Thus, $t$ defines an involution $\tau := t |_{V} : V \to V$ of $V$ with five fixed points. 
The quotient of $V$ via the involution $\tau$ is an Enriques-Fano threefold of genus $17$ (see \cite[Proposition 3.2]{Pro07}).
In particular, the quotient map $\pi : V \to V/\tau =: W_{P}^{17}$ is defined by the restriction on $V$ of the linear system $\mathcal{Q}$ of the quadric hypersurfaces of $\mathbb{P}^{9}$ of type
$$q_{1}(y_{0,0},y_{0,2},y_{1,1},y_{2,0},y_{2,2})+q_2(y_{0,1},y_{1,0},y_{1,2},y_{2,1},x)=0,$$
where $q_1$ and $q_2$ are quadratic homogeneous forms. 
Let us observe that the linear system $\mathcal{Q}$ defines a morphism $\varphi : \mathbb{P}^{9} \to \mathbb{P}^{29}$, $\left[y_{0,0}:\dots :y_{2,2}:x\right] \mapsto \left[Z_0:\dots :Z_{29} \right]$,
where
\begin{center}
$Z_{0} = {y}_{1,1}^{2}, \,\,
Z_{1} = {y}_{0,0}^{2}, \,\,
Z_{2} = {y}_{0,2}^{2}, \,\,
Z_{3} = {y}_{2,0}^{2}, \,\,
Z_{4} = {y}_{2,2}^{2}, \,\,
Z_{5} = x^{2}, \,\,
Z_{6} = {y}_{0,1}^{2},$

$Z_{7} = {y}_{1,0}^{2}, \,\, 
Z_{8} = {y}_{1,2}^{2}, \,\, 
Z_{9} = {y}_{2,1}^{2}, \,\, 
Z_{10} = {y}_{0,1} x, \,\,
Z_{11} = {y}_{1,0} x, \,\,
Z_{12} = {y}_{1,2} x,$

$Z_{13} = {y}_{2,1}x, \,\,
Z_{14} = {y}_{0,0}{y}_{1,1},\,\,
Z_{15} = {y}_{0,2} {y}_{1,1},\,\,
Z_{16} = {y}_{2,0} {y}_{1,1}, \,\,
Z_{17} = {y}_{2,2} {y}_{1,1},$

$Z_{18} = {y}_{0,1} {y}_{1,0},\,\,
Z_{19} = {y}_{0,1}{y}_{1,2}, \,\,
Z_{20} = {y}_{1,0} {y}_{2,1}, \,\, 
Z_{21} = {y}_{1,2}{y}_{2,1},$

$Z_{22} = {y}_{0,0} {y}_{0,2}, \,\,
Z_{23} = {y}_{0,0}{y}_{2,0},\,\,
Z_{24} = {y}_{0,2}{y}_{2,2}, \,\,
Z_{25} = {y}_{2,0}{y}_{2,2},$

$Z_{26} = {y}_{0,1} {y}_{2,1}, \,\,
Z_{27} = {y}_{0,0} {y}_{2,2}, \,\,
Z_{28} = {y}_{0,2} {y}_{2,0},\,\,
Z_{29} = {y}_{1,0}{y}_{1,2}.$
\end{center}
By looking at the ideal of $V\subset \mathbb{P}^9$, we deduce that $\varphi (V)$ is contained in a $17$-dimensional projective subspace of $\mathbb{P}^{29}$ given by 
\begin{center}
$H_{17}:= \{ Z_{18} = Z_{14}, \, 
Z_{19} = Z_{15}, \,
Z_{20} = Z_{16}, \,
Z_{21} = Z_{17},\,
Z_{22} = Z_{6}, \,
Z_{23} = Z_{7},$

$\quad \quad \quad Z_{24} = Z_{8}, \,
Z_{25} = Z_{9},\,
Z_{26} = Z_0, \,
Z_{27} = Z_0, \,
Z_{28} = Z_0, \,
Z_{29} = Z_0\}.$
\end{center}
Hence we obtain the morphism $\pi : V \to W_{P}^{17}=\varphi(V) \subset H_{17} \cong \mathbb{P}^{17}$ defined by 
$\left[y_{0,0}:y_{0,1}:y_{0,2}:y_{1,0}:y_{1,1}:y_{1,2}:y_{2,0}:y_{2,1}:y_{2,2}:x\right] \mapsto \left[z_0:z_1:\dots :z_{16}:z_{17} \right]$, where
\begin{center}
$z_{0} = {y}_{1,1}^{2}$,
$z_{1} = {y}_{0,0}^{2}$,
$z_{2} = {y}_{0,2}^{2}$,
$z_{3} = {y}_{2,0}^{2}$,
$z_{4} = {y}_{2,2}^{2}$,
$z_{5} = x^{2}$,
$z_{6} = {y}_{0,1}^{2}$, 
$z_{7} = {y}_{1,0}^{2}$, 
$z_{8} = {y}_{1,2}^{2}$,
$z_{9} = {y}_{2,1}^{2}$,
$z_{10} = {y}_{0,1} x$,
$z_{11} = {y}_{1,0} x$,
$z_{12} = {y}_{1,2} x$,
$z_{13} = {y}_{2,1}x$,
$z_{14} = {y}_{0,0}{y}_{1,1}$,
$z_{15} = {y}_{0,2} {y}_{1,1}$,
$z_{16} = {y}_{2,0} {y}_{1,1}$,
$z_{17} = {y}_{2,2} {y}_{1,1}.$
\end{center}
Thanks to Macaulay2 we find that the threefold $W_{P}^{17}$ has ideal generated by the following $88$ quadratic polynomials
\begin{center}
${z}_{15} {z}_{16}-{z}_{14} {z}_{17}, \quad
{z}_{12}{z}_{16}-{z}_{11} {z}_{17}, \quad
{z}_{9} {z}_{16}-{z}_{3} {z}_{17}, \quad
{z}_{8}{z}_{16}-{z}_{0} {z}_{17}, \quad
{z}_{6} {z}_{16}-{z}_{1} {z}_{17}, $

${z}_{4}{z}_{16}-{z}_{9} {z}_{17}, \quad
{z}_{2} {z}_{16}-{z}_{6} {z}_{17}, \quad
{z}_{0}{z}_{16}-{z}_{7} {z}_{17}, \quad
{z}_{13} {z}_{15}-{z}_{10} {z}_{17}, \quad
{z}_{9}{z}_{15}-{z}_{0} {z}_{17}, $

${z}_{8} {z}_{15}-{z}_{2} {z}_{17}, \quad
{z}_{7}{z}_{15}-{z}_{1} {z}_{17}, \quad
{z}_{4} {z}_{15}-{z}_{8} {z}_{17}, \quad
{z}_{3}{z}_{15}-{z}_{7} {z}_{17}, \quad
{z}_{0} {z}_{15}-{z}_{6} {z}_{17},$

${z}_{13}{z}_{14}-{z}_{10} {z}_{16}, \quad
{z}_{12} {z}_{14}-{z}_{11} {z}_{15}, \quad
{z}_{9}{z}_{14}-{z}_{7} {z}_{17}, \quad
{z}_{8} {z}_{14}-{z}_{6} {z}_{17}, \quad
{z}_{7}{z}_{14}-{z}_{1} {z}_{16}, $

${z}_{6} {z}_{14}-{z}_{1} {z}_{15}, \quad
{z}_{4}{z}_{14}-{z}_{0} {z}_{17}, \quad
{z}_{3} {z}_{14}-{z}_{7} {z}_{16}, \quad
{z}_{2}{z}_{14}-{z}_{6} {z}_{15}, \quad
{z}_{0} {z}_{14}-{z}_{1} {z}_{17},$

${z}_{12}{z}_{13}-{z}_{5} {z}_{17}, \quad
{z}_{11} {z}_{13}-{z}_{5} {z}_{16}, \quad
{z}_{8}{z}_{13}-{z}_{12} {z}_{17}, \quad
{z}_{7} {z}_{13}-{z}_{11} {z}_{16}, \quad
{z}_{6}{z}_{13}-{z}_{11} {z}_{15},$

${z}_{2} {z}_{13}-{z}_{12} {z}_{15}, \quad
{z}_{1}{z}_{13}-{z}_{11} {z}_{14}, \quad
{z}_{0} {z}_{13}-{z}_{11} {z}_{17}, \quad
{z}_{11} {z}_{12}-{z}_{10} {z}_{13},\quad
{z}_{10} {z}_{12}-{z}_{5} {z}_{15},$

${z}_{9} {z}_{12}-{z}_{13} {z}_{17}, \quad
{z}_{7} {z}_{12}-{z}_{10} {z}_{16}, \quad
{z}_{6}{z}_{12}-{z}_{10} {z}_{15}, \quad
{z}_{3} {z}_{12}-{z}_{13} {z}_{16}, \quad
{z}_{1}{z}_{12}-{z}_{10} {z}_{14},$

${z}_{0} {z}_{12}-{z}_{10} {z}_{17}, \quad
{z}_{10}{z}_{11}-{z}_{5} {z}_{14}, \quad
{z}_{9} {z}_{11}-{z}_{13} {z}_{16}, \quad
{z}_{8}{z}_{11}-{z}_{10} {z}_{17}, \quad
{z}_{6} {z}_{11}-{z}_{10} {z}_{14}, $

${z}_{4}{z}_{11}-{z}_{13} {z}_{17}, \quad
{z}_{2} {z}_{11}-{z}_{10} {z}_{15}, \quad
{z}_{0} {z}_{11}-{z}_{10} {z}_{16}, \quad
{z}_{9} {z}_{10}-{z}_{11} {z}_{17}, \quad
{z}_{8}{z}_{10}-{z}_{12} {z}_{15},$

${z}_{7} {z}_{10}-{z}_{11} {z}_{14}, \quad
{z}_{4}{z}_{10}-{z}_{12} {z}_{17}, \quad
{z}_{3} {z}_{10}-{z}_{11} {z}_{16}, \quad
{z}_{0}{z}_{10}-{z}_{11} {z}_{15}, \quad
{z}_{8} {z}_{9}-{z}_{17}^{2},$

${z}_{7} {z}_{9}-{z}_{16}^{2}, \quad
{z}_{6} {z}_{9}-{z}_{14} {z}_{17}, \quad
{z}_{5}{z}_{9}-{z}_{13}^{2}, \quad
{z}_{2} {z}_{9}-{z}_{15} {z}_{17}, \quad
{z}_{1}{z}_{9}-{z}_{14} {z}_{16},$

${z}_{0} {z}_{9}-{z}_{16} {z}_{17}, \quad
{z}_{7}{z}_{8}-{z}_{14} {z}_{17}, \quad
{z}_{6} {z}_{8}-{z}_{15}^{2}, \quad
{z}_{5}{z}_{8}-{z}_{12}^{2}, \quad
{z}_{3} {z}_{8}-{z}_{16} {z}_{17},$

${z}_{1}{z}_{8}-{z}_{14} {z}_{15}, \quad
{z}_{0} {z}_{8}-{z}_{15} {z}_{17}, \quad
{z}_{6}{z}_{7}-{z}_{14}^{2}, \quad
{z}_{5} {z}_{7}-{z}_{11}^{2}, \quad
{z}_{4}{z}_{7}-{z}_{16} {z}_{17},$

${z}_{2} {z}_{7}-{z}_{14} {z}_{15}, \quad
{z}_{0}{z}_{7}-{z}_{14} {z}_{16}, \quad
{z}_{5} {z}_{6}-{z}_{10}^{2}, \quad
{z}_{4}{z}_{6}-{z}_{15} {z}_{17}, \quad
{z}_{3} {z}_{6}-{z}_{14} {z}_{16},$

${z}_{0}{z}_{6}-{z}_{14} {z}_{15}, \quad
{z}_{0} {z}_{5}-{z}_{10} {z}_{13}, \quad
{z}_{3}{z}_{4}-{z}_{9}^{2}, \quad
{z}_{2} {z}_{4}-{z}_{8}^{2}, \quad
{z}_{1} {z}_{4}-{z}_{14}{z}_{17},$

${z}_{0} {z}_{4}-{z}_{17}^{2}, \quad
{z}_{2} {z}_{3}-{z}_{14}{z}_{17}, \quad
{z}_{1} {z}_{3}-{z}_{7}^{2}, \quad
{z}_{0}{z}_{3}-{z}_{16}^{2}, \quad
{z}_{1} {z}_{2}-{z}_{6}^{2},$

${z}_{0}{z}_{2}-{z}_{15}^{2}, \quad
{z}_{0} {z}_{1}-{z}_{14}^{2}, \quad
{z}_{0}^{2}-{z}_{14}{z}_{17}.$
\end{center}
Furthermore, $W_{P}^{17}$ has the following five singular points
$$P_1 := \pi (v_{0,0}),\,\,\,
P_2 := \pi (v_{0,2}),\,\,\,
P_3 := \pi (v_{2,0}),\,\,\,
P_4 := \pi (v_{2,2}),\,\,\,
P_5 := \pi (v),$$
that is $P_i=\{z_k=0 | k\ne i\}$ for $1\le i\le 5$.

\begin{proposition}\label{prop:cones1234p17}
If $i=1,2,3,4$, the tangent cone $TC_{P_i} W_{P}^{17}$ to $W_{P}^{17}$ at the point $P_{i}$ is a cone over a Veronese surface.
\end{proposition}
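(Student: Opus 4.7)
The plan is to compute the tangent cone at each $P_i$ by describing the quotient map $\pi$ locally near the corresponding fixed point $v_{i,j}\in V$. Since $V$ is the projective cone over $P\cong \mathbb{P}^{1}\times \mathbb{P}^{1}$ with vertex $v$, and the four points $v_{0,0}$, $v_{0,2}$, $v_{2,0}$, $v_{2,2}$ lie on $P$ and are distinct from $v$, the threefold $V$ is smooth at each of them, so locally I have three regular parameters to work with.

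I would treat the case $P_1=\pi(v_{0,0})$ first; the other three are identical by symmetry. In the affine chart $y_{0,0}\ne 0$ of $\mathbb{P}^{9}$, the ideal of $V$ written down earlier yields the three local parameters
$$s:=\frac{y_{0,1}}{y_{0,0}},\qquad t:=\frac{y_{1,0}}{y_{0,0}},\qquad \xi:=\frac{x}{y_{0,0}},$$
and every other ratio obeys $y_{i,j}/y_{0,0}=t^{i}s^{j}$. From the formulas $\tau(y_{i,j})=(-1)^{i+j}y_{i,j}$ and $\tau(x)=-x$ I read off that $\tau$ acts on $(s,t,\xi)$ by negating all three coordinates, so $v_{0,0}$ is an isolated fixed point of $\tau$ on $V$.

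Next I would pass to $W_{P}^{17}$ through $\pi$. In the affine chart $z_{1}\ne 0$ of $\mathbb{P}^{17}$, after imposing the twelve linear equations cut out by $H_{17}$ (in particular $z_{18}=z_{14}$, $z_{22}=z_{6}$, $z_{23}=z_{7}$, $z_{26}=z_{27}=z_{28}=z_{29}=z_{0}$, and so on), a direct inspection of the list of coordinates $z_{k}$ shows that the non-trivially varying functions $z_{k}/z_{1}$ reduce to the six quadratic monomials $s^{2},\,t^{2},\,\xi^{2},\,st,\,s\xi,\,t\xi$. Thus, in a Zariski neighborhood of $P_{1}$, the map $\pi$ is the degree-two Veronese map $\mathbb{A}^{3}\to \mathbb{A}^{6}$, whose image is the affine cone over the Veronese surface of $\mathbb{P}^{5}$. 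Since this image is already an affine cone with vertex at $P_{1}$, it equals its own tangent cone at that point, giving the statement for $P_{1}$; for $P_{2}$, $P_{3}$, $P_{4}$ I would repeat the argument in the affine charts $y_{0,2}\ne 0$, $y_{2,0}\ne 0$, $y_{2,2}\ne 0$, where the local parameters on $V$ are again three ratios all of weight $-1$ under $\tau$.

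The main obstacle is the bookkeeping in the third paragraph: one has to verify carefully that, once the twelve linear relations defining $H_{17}$ are used, the remaining $z_{k}/z_{1}$ are \emph{exactly} the six invariants $s^{2},t^{2},\xi^{2},st,s\xi,t\xi$ and that no further relation is hiding. This is a routine check (implicit in the explicit list of $88$ quadratic generators of the ideal of $W_{P}^{17}$ already produced by Macaulay2, as well as in the computation of the initial ideal at $P_{1}$), but it is the only point that demands more than a symmetry observation.
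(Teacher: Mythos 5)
Your approach is sound and reaches the right conclusion, but it is genuinely different from the paper's. The paper proves the proposition by brute force: it works in the affine chart $U_i=\{z_i\neq 0\}$, has Macaulay2 compute the ideal generated by the leading forms of the generators of the ideal of $W_P^{17}\cap U_i$, and then recognizes the output (eleven linear forms plus the six standard $2\times 2$-minor relations among $z_5,z_6,z_7,z_{10},z_{11},z_{14}$) as the cone over a Veronese surface in a $\mathbb{P}^5$. You instead argue conceptually upstairs: $V$ is smooth at $v_{0,0}$, the involution $\tau$ acts as $-\mathrm{id}$ on the local parameters $(s,t,\xi)$, so $P_1$ is a $\tfrac12(1,1,1)$ cyclic quotient point and its tangent cone must be the cone over the Veronese. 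This explains \emph{why} the computation comes out the way it does and matches the general fact, recalled in the introduction, that BS-EF quotient points have Veronese tangent cones; what it costs you is the need to match the intrinsic local picture with the explicit embedding in $\mathbb{P}^{17}$, which is exactly the bookkeeping you flag.

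One step is stated too loosely and should be repaired. It is not true that the remaining functions $z_k/z_1$ "reduce to" the six quadratic monomials: for instance $z_0/z_1=s^2t^2$, $z_2/z_1=s^4$, $z_{15}/z_1=ts^3$ are nonzero on $V$ near $v_{0,0}$. Consequently the image of the local quotient map in the affine chart $\mathbb{A}^{17}$ is \emph{not} literally an affine cone with vertex at $P_1$ (scaling the six order-two coordinates by $\lambda$ scales $z_0/z_1$ by $\lambda^2$), so you cannot conclude directly that the image equals its own tangent cone. The fix is standard: exactly six of the ratios $z_k/z_1$ (namely $\xi^2,s^2,t^2,s\xi,t\xi,st$, i.e.\ $z_5,z_6,z_7,z_{10},z_{11},z_{14}$ over $z_1$) vanish to order $2$ at the origin, and all the others vanish to order at least $4$ and are polynomials without linear term in those six. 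Hence $W_P^{17}\cap U_1$ is the graph over the affine Veronese cone of a map vanishing to order $\ge 2$, and the automorphism of $\mathbb{A}^{17}$ subtracting that graph has identity linear part, so it does not change the tangent cone; the tangent cone is therefore the Veronese cone sitting in the $6$-plane where the other eleven coordinates vanish, in agreement with the paper's explicit ideal. With that paragraph added, your proof is complete, and the symmetry reduction to the single point $v_{0,0}$ is legitimate since swapping $u_0\leftrightarrow u_1$ and $v_0\leftrightarrow v_1$ commutes with $\tau$.
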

\begin{proof}
Each point $P_i$, $i=1,2,3,4$, can be viewed as the origin of the open affine set $U_i := \{z_i \ne 0\}$. The ideal of the tangent cone $TC_{P_i}(W_{P}^{17}\cap U_i)$ is generated by the minimal degree homogeneous parts of all the polynomials in the ideal of $W_{P}^{17} \cap U_i$.
By using Macaulay2 we obtain the following tangent cones.

$TC_{P_1}(W_{P}^{17}\cap U_1)$ has ideal generated by ${z}_{17},{z}_{16},{z}_{15},{z}_{13},{z}_{12},{z}_{9},{z}_{8},{z}_{4},{z}_{3},{z}_{2},{z}_{0},$
${z}_{10}{z}_{11}-{z}_{5}{z}_{14},\, {z}_{6} {z}_{11}-{z}_{10} {z}_{14},\, {z}_{7} {z}_{10}-{z}_{11}{z}_{14},\, {z}_{6} {z}_{7}-{z}_{14}^{2},\, {z}_{5}{z}_{7}-{z}_{11}^{2},\, {z}_{5} {z}_{6}-{z}_{10}^{2}.$
Hence $TC_{P_1}W_{P}^{17}$ is a cone with vertex $P_1$ over a Veronese surface in the $\mathbb{P}^{5}$ defined by $\{ z_i = 0 | i = 0,1,2,3,4,8,9,12,13,15,16,17 \}$.

$TC_{P_2}(W_{P}^{17}\cap U_2)$ has ideal generated by ${z}_{17},{z}_{16},{z}_{14},{z}_{13},{z}_{11},{z}_{9},{z}_{7},{z}_{4},{z}_{3},{z}_{1},{z}_{0},$
${z}_{10} {z}_{12}-{z}_{5}{z}_{15}, \, {z}_{6} {z}_{12}-{z}_{10} {z}_{15}, \, {z}_{8} {z}_{10}-{z}_{12}{z}_{15}, \, {z}_{6} {z}_{8}-{z}_{15}^{2}, \, {z}_{5}{z}_{8}-{z}_{12}^{2}, \, {z}_{5} {z}_{6}-{z}_{10}^{2}.$
Hence $TC_{P_2}W_{P}^{17}$ is a cone with vertex $P_2$ over a Veronese surface in the $\mathbb{P}^{5}$ defined by $\{ z_i = 0 | i = 0,1,2,3,4,7,9,11,13,14,16,17 \}$.

$TC_{P_3}(W_{P}^{17}\cap U_3)$ has ideal generated by ${z}_{17},{z}_{15},{z}_{14},{z}_{12},{z}_{10},{z}_{8},{z}_{6},{z}_{4},{z}_{2},{z}_{1},{z}_{0},$
${z}_{11} {z}_{13}-{z}_{5}{z}_{16}, \, {z}_{7} {z}_{13}-{z}_{11} {z}_{16}, \, {z}_{9} {z}_{11}-{z}_{13}{z}_{16},\, {z}_{7} {z}_{9}-{z}_{16}^{2}, \, {z}_{5}{z}_{9}-{z}_{13}^{2}, \, {z}_{5} {z}_{7}-{z}_{11}^{2}.$ 
Hence $TC_{P_3}W_{P}^{17}$ is a cone with vertex $P_3$ over a Veronese surface in the $\mathbb{P}^{5}$ defined by $\{ z_i = 0 | i = 0,1,2,3,4,6,8,10,12,14,15,17 \}$.

$TC_{P_4}(W_{P}^{17}\cap U_4)$ has ideal generated by ${z}_{16},{z}_{15},{z}_{14},{z}_{11},{z}_{10},{z}_{7},{z}_{6},{z}_{3},{z}_{2},{z}_{1},{z}_{0},$
${z}_{12} {z}_{13}-{z}_{5}{z}_{17},\, {z}_{8} {z}_{13}-{z}_{12} {z}_{17},\, {z}_{9} {z}_{12}-{z}_{13}{z}_{17},\, {z}_{8} {z}_{9}-{z}_{17}^{2},\, {z}_{5}{z}_{9}-{z}_{13}^{2},\ {z}_{5} {z}_{8}-{z}_{12}^{2}.$
Hence $TC_{P_4}W_{P}^{17}$ is a cone with vertex $P_4$ over a Veronese surface in the $\mathbb{P}^{5}$ defined by $\{ z_i = 0 | i = 0,1,2,3,4,6,7,10,11,14,15,16 \}$. 
\end{proof}


\begin{theorem}\label{thm:cone5p17}
The tangent cone $TC_{P_5} W_{P}^{17}$ to $W_{P}^{17}$ at the point $P_{5}$ is a cone over a reducible sextic surface $M_6 \subset \mathbb{P}^{7} \subset \mathbb{P}^{17}$, which is given by the union of four planes $\pi_{1},\pi_{2},\pi_{1}',\pi_{2}'$ and a quadric surface $Q \subset \mathbb{P}^{3} \subset \mathbb{P}^{7}$. In particular each one of the planes $\pi_{1},\pi_{2},\pi_{1}',\pi_{2}'$ intersects the quadric $Q$ respectively along a line $l_{1}$ , $l_{2}$, $l_{1}'$, $l_{2}'$, where $l_i$ is disjoint from $l_i'$, for $i=1,2$. In the other cases the intersections of two of these lines identify four points, i.e.
$q_{1,2} := l_1 \cap l_2,\, q_{1,2'} := l_1 \cap l_2', \, q_{1',2} := l_1' \cap l_2, \, q_{1',2'} := l_1' \cap l_2'$.
\begin{figure}[h]
\centering
\includegraphics[scale=0.5]{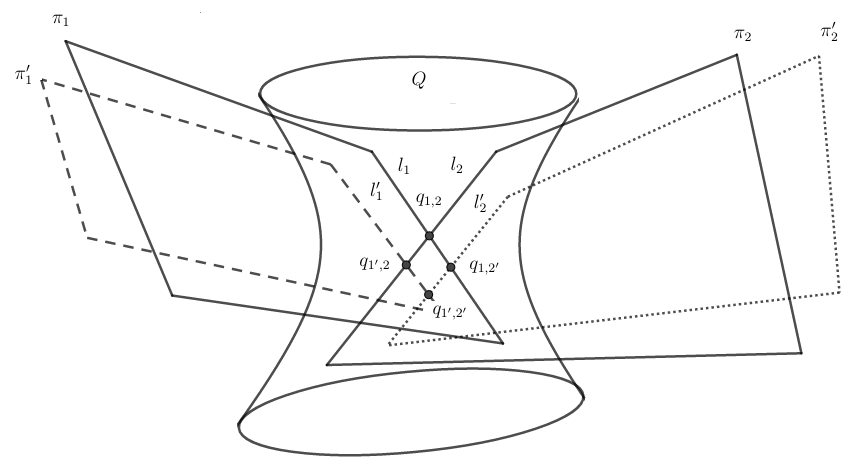}
\caption{The reducible sextic surface $M_6\subset \mathbb{P}^{7}$ given by the union of four planes $\pi_{1}$, $\pi_{2}$, $\pi_{1}'$, $\pi_{2}'$ and a quadric surface $Q\subset \mathbb{P}^{3}\subset \mathbb{P}^{7}$, which intersect as in the statement of Theorems~\ref{thm:cone5p17},~\ref{thm:cone5KLM}.}
\label{fig:sexticM}
\end{figure}
\end{theorem}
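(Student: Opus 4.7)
The plan is to follow the computational strategy of Proposition~\ref{prop:cones1234p17}: I regard $P_5 = \{z_k=0\mid k\ne 5\}$ as the origin of the affine chart $U_5 := \{z_5\ne 0\}$, and ask Macaulay2 to compute the ideal of $TC_{P_5}(W_P^{17}\cap U_5)$ as the ideal generated by the minimal-degree homogeneous parts of the polynomials in the ideal of $W_P^{17}\cap U_5$.

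The first step is to read off the linear part of the tangent cone. Among the $88$ quadratic generators listed above, exactly nine involve $z_5$, namely $z_5 z_6 - z_{10}^2$, $z_5 z_7 - z_{11}^2$, $z_5 z_8 - z_{12}^2$, $z_5 z_9 - z_{13}^2$, $z_{10}z_{11}-z_5 z_{14}$, $z_{10}z_{12}-z_5 z_{15}$, $z_{11}z_{13}-z_5 z_{16}$, $z_{12}z_{13}-z_5 z_{17}$, together with $z_0 z_5 - z_{10}z_{13}$. After the dehomogenization $z_5 = 1$, these produce the nine linear initial forms $z_0, z_6, z_7, z_8, z_9, z_{14}, z_{15}, z_{16}, z_{17}$, so the surface $M_6$ will lie in the $\mathbb{P}^7 \subset \mathbb{P}^{17}$ defined by $\{z_0=z_5=z_6=z_7=z_8=z_9=z_{14}=z_{15}=z_{16}=z_{17}=0\}$, with homogeneous coordinates $z_1, z_2, z_3, z_4, z_{10}, z_{11}, z_{12}, z_{13}$.

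Next, I would substitute these linear relations into the remaining $79$ quadratic generators and let Macaulay2 compute a minimal system of generators of the ideal of $M_6$ in this $\mathbb{P}^7$. Many of the surviving quadratic generators should factor as products of two linear forms, so a primary decomposition (again via Macaulay2) should exhibit the four linear components $\pi_1, \pi_2, \pi_1', \pi_2'$, while the remaining component $Q$ is cut out by one irreducible quadratic relation together with four further linear equations isolating a $\mathbb{P}^3 \subset \mathbb{P}^7$. Since four planes and a quadric contribute $4\cdot 1 + 2 = 6$ to the degree, the equality $\deg M_6 = 6$ is then automatic.

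Finally, the incidence data is a short additional Macaulay2 computation: intersect each $\pi_i$ and $\pi_i'$ with $Q$ to obtain the lines $l_i$ and $l_i'$, check that $l_i\cap l_i'=\emptyset$ for $i=1,2$, and verify that each of $l_1\cap l_2$, $l_1\cap l_2'$, $l_1'\cap l_2$, $l_1'\cap l_2'$ is a single point, giving the four labels $q_{1,2}, q_{1,2'}, q_{1',2}, q_{1',2'}$. The main subtlety, where I expect the real work to lie, is in the decomposition step: the initial forms of a set of generators of an ideal do not in general generate the initial ideal, so one must either verify that the $88$ given generators of the ideal of $W_P^{17}$ form a standard basis at $P_5$, or compute the tangent cone ideal directly via the dedicated Macaulay2 routines with respect to a suitable local monomial order, to be sure no hidden relation alters the decomposition of $M_6$.
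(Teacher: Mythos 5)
Your proposal follows essentially the same route as the paper's proof: the paper views $P_5$ as the origin of the chart $U_5=\{z_5\ne 0\}$, computes the tangent cone ideal with Macaulay2's \texttt{tangentCone} routine (which is precisely how the standard-basis subtlety you flag at the end is handled, rather than by taking initial forms of the given generators), and then decomposes $M_6$ via \texttt{associatedPrimes} into the four planes and the quadric, verifying the incidences of the lines $l_i$, $l_i'$ by further short computations. Your by-hand identification of the nine generators involving $z_5$, and hence of the linear part $z_0,z_6,z_7,z_8,z_9,z_{14},z_{15},z_{16},z_{17}$ cutting out the ambient $\mathbb{P}^7$, agrees with the paper's computed output.
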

\begin{proof}
The point $P_5$ can be viewed as the origin of the open affine set given by $U_5 := \{z_5 \ne 0\}$. The ideal of the tangent cone $TC_{P_5}(W_{P}^{17}\cap U_5)$ is generated by the minimal degree homogeneous parts of all the polynomials in the ideal of $W_{P}^{17} \cap U_5$. 
By using Macaulay2 we find that $TC_{P_5}(W_{P}^{17}\cap U_5)$ has ideal generated by the following polynomials
$${z}_{17},\,{z}_{16},\,{z}_{15},\,{z}_{14},\,{z}_{9},\,{z}_{8},\,{z}_{7},\,{z}_{6},\,{z}_{0},\quad
{z}_{11}{z}_{12}-{z}_{10} {z}_{13},$$
$${z}_{2} {z}_{13}, {z}_{1} {z}_{13}, {z}_{3} {z}_{12}, {z}_{1} {z}_{12}, {z}_{4}{z}_{11}, {z}_{2} {z}_{11}, {z}_{4} {z}_{10}, {z}_{3} {z}_{10}, {z}_{3}{z}_{4}, {z}_{2} {z}_{4}, {z}_{1} {z}_{4}, {z}_{2} {z}_{3}, {z}_{1}{z}_{3}, {z}_{1} {z}_{2}.$$
Hence $TC_{P_5}W_{P}^{17}$ is a cone with vertex $P_5$ over a surface $M_6$ contained in the $\mathbb{P}^{7}$ defined by $\{ z_i = 0 | i = 0,5,6,7,8,9,14,15,16,17 \}$. This surface $M_6$ is the union of four planes $\pi_{1}, \pi_{2}, \pi_{1}', \pi_{2}'$ and a quadric surface $Q$, where

\begin{center}
$\pi_{1} := \{z_i = 0 | i=0,1,3,4,5,6,7,8,9,11,13,14,15,16,17 \},$

$\pi_{2} := \{z_i = 0 | i=0,2,3,4,5,6,7,8,9,12,13,14,15,16,17 \},$

$\pi_{1}' := \{z_i = 0 | i=0,1,2,4,5,6,7,8,9,10,12,14,15,16,17 \},$

$\pi_{2}' := \{z_i = 0 | i=0,1,2,3,5,6,7,8,9,10,11,14,15,16,17 \},$

$Q := \{z_i = 0 | i=0,1,2,3,4,5,6,7,8,9,14,15,16,17 \} \cap \{z_{11}z_{12}-z_{10}z_{13}=0\}.$
\end{center}
We give an idea of $M_6$ in Figure~\ref{fig:sexticM}.
\end{proof}



By Proposition~\ref{prop:cones1234p17} and Theorem~\ref{thm:cone5p17} we have that $W_P^{17}$ has five non-similar points. For completeness, let us find their configuration.
Let $l_{i,j}:=\{z_k=0|k\ne i,j\}$ be the line joining the singular points $P_i$ and $P_j$ with $1\le i<j \le 5$. By looking at the ideal of $W_P^{17}$, we deduce that the lines $l_{1,5}$, $l_{2,5}$, $l_{3,5}$, $l_{4,5}$ are contained in $W_{P}^{17}$, while the lines $l_{1,2}$, $l_{1,3}$, $l_{1,4}$, $l_{2,3}$, $l_{2,4}$, $l_{3,4}$ are not. 
Hence the five singular points $P_1$, $P_2$, $P_3$, $P_4$, $P_5$ of $W_{P}^{17}$ are associated as in Table~\ref{tab:proKLM}
of Appendix~\ref{app:congifuration}.

\begin{remark}\label{rem:singpro17NOCM2}
Let $bl: \operatorname{Bl}_{P_i=1,\dots 5}\mathbb{P}^{17} \to \mathbb{P}^{17}$ be the blow-up of $\mathbb{P}^{17}$ at the five singular points of $W_P^{17}$ and let $\widetilde{W}$ be the strict transform of $W_P^{17}$. Then $\widetilde{W}$ intersects the exceptional divisor $bl^{-1}(P_5)$ along a surface which is isomorphic to $M_6$ and which has four singular points locally given by the intersection of three planes of $\mathbb{P}^4$, such that two of them intersect the third along two lines and intersect each other at a point which is the intersection of these two lines. 
Consequently, $\widetilde{W}$ is not a desingularization of $W_P^{17}$, since there are four singular points infinitely near to $P_5$. Therefore it is not enough to blow-up $W_P^{17}$ at $P_1$, $P_2$, $P_3$, $P_4$, $P_5$ to solve the singularities of $W_P^{17}$, as instead implicitly assumed by Fano and explicitly by Conte-Murre (see \cite[\S 3.10]{CoMu85}).


\end{remark}

\section{The P-EF 3-fold of genus 13}\label{subsec:pro13}

Let us study the Enriques-Fano threefold mentioned in \cite[Remark 3]{Pro07}. We refer to \S~\ref{code:pro13} of Appendix~\ref{app:code} for the computational techniques we will use.
Let us take the smooth sextic Del Pezzo surface $S_6\subset \mathbb{P}^{6}_{[x_0:x_1:x_2:x_3:x_4:x_5:x_6]}$ defined in \S~\ref{subsec:bayle10}.
%
Let us consider $\mathbb{P}^{6}$ as the hyperplane $\{y_0=0\} \subset \mathbb{P}^7_{[x_0:x_1:x_2:x_3:x_4:x_5:x_6:y]}$ and let $V$ be the cone over $S_6$ with vertex $v:=[0:0:0:0:0:0:0:1]$. With a similar proof as the one of \cite[Lemma 3.1]{Pro07}, one can see that $V$ is a singular Fano threefold. 
Let $t$ be the involution of $\mathbb{P}^7$ defined by 
$\left[x_0 : \dots : x_6: y\right] \mapsto \left[x_2 : x_4 : x_0 : x_5 : x_1 : x_3 : x_6: -y\right].$
The locus of $t$-fixed points in $\mathbb{P}^{7}$ consists of two projective subspaces
$$F_1 = \{x_0+x_2 = x_1+x_4 = x_3+x_5 = x_6 = 0\}\cong \mathbb{P}^{3},$$
$$F_{2} = \{x_0-x_2 = x_1-x_4 = x_3-x_5 = y = 0\} \cong \mathbb{P}^{3}.$$
In particular we have that $F_1\cap V = \{v\}$ and $F_{2}\cap V = \{v_1,v_2,v_3,v_4\}$, where
$$v_1 := \left[1:1:1:1:1:1:1:0 \right], \quad v_2 :=\left[1:-1:1:-1:-1:-1:1:0\right],$$
$$v_3 := \left[-1:1:-1:-1:1:-1:1:0\right],\quad v_4 := \left[-1:-1:-1:1:-1:1:1:0\right].$$
Thus, $t$ induces an involution $\tau := t |_{V}$ of $V$ with five fixed points.  
The quotient of $V$ via the involution $\tau$ is an Enriques-Fano threefold of genus $13$, which we will denote by $W_{P}^{13}$: one can deduce this by using a similar proof as the one of \cite[Proposition 3.2]{Pro07}.
In this case the quotient map $\pi : V \to V/\tau = W_{P}^{13}$ is defined by the restriction on $V$ of the linear system $\mathcal{Q}$ of the quadric hypersurfaces of $\mathbb{P}^{9}$ of type
$$q_1(x_0+x_2,x_1+x_4,x_3+x_5,x_6)+q_{2}(x_0-x_2,x_1-x_4,x_3-x_5,y)=0,$$
where $q_1$ and $q_2$ are quadratic homogeneous forms. 
The linear system $\mathcal{Q}$
defines a morphism $\varphi : \mathbb{P}^{7} \to \mathbb{P}^{19}$ such that 
$\left[x_0:\dots:x_6:y\right] \mapsto \left[Z_0:\dots :Z_{19} \right]$
where
\begin{center}
$Z_0 = x_6^2$,
$Z_1 = x_0^2+x_2^2$, 
$Z_2 = x_1^2+x_4^2$, 
$Z_3 = x_3^2+x_5^2$,
$Z_4 = (x_0+x_2)x_6$,
$Z_5 = (x_1+x_4)x_6$,
$Z_6 = (x_3+x_5)x_6$,
$Z_7 = x_0x_1+x_2x_4$,
$Z_8 = x_2x_3+x_0x_5$, 
$Z_9 = x_1x_3+x_4x_5$,
$Z_{10} = (x_0-x_2)y$,
$Z_{11} = (x_1-x_4)y$,
$Z_{12} = (x_3-x_5)y$,
$Z_{13} = y^2$,
$Z_{14} = 2x_0x_2$,
$Z_{15} = 2x_1x_4$,
$Z_{16} = 2x_3x_5$,
$Z_{17} = x_4x_3+x_1x_5$,
$Z_{18} = x_0x_3+x_2x_5$,
$Z_{19} = x_1x_2+x_0x_4$.
\end{center}
By looking at the ideal of $V\subset \mathbb{P}^7$, we observe that the threefold $\varphi (V)$ is contained in a $13$-dimensional projective subspace of $\mathbb{P}^{19}$ given by 
$$H_{13}:= \{Z_{14} = 2Z_0, \,\,
Z_{15} = 2Z_0, \,\,
Z_{16} = 2Z_0, \,\,
Z_{17} = Z_4, \,\,
Z_{18} = Z_5, \,\,
Z_{19} = Z_6\}.$$
%
%
%
Thus, we obtain the morphism $\pi : V \to W_{P}^{13} = \varphi (V) \subset H_{13} \cong \mathbb{P}^{13}$ defined by 
$\left[x_0:\dots:x_6:y\right] \mapsto \left[z_0:\dots :z_{13} \right]$
where
\begin{center}
$z_0 = x_6^2$,
$z_1 = x_0^2+x_2^2$,   
$z_2 = x_1^2+x_4^2$, 
$z_3 = x_3^2+x_5^2$,
$z_4 = (x_0+x_2)x_6$,
$z_5 = (x_1+x_4)x_6$, 
$z_6 = (x_3+x_5)x_6$, 
$z_7 = x_0x_1+x_2x_4$, 
$z_8 = x_2x_3+x_0x_5$, 
$z_9 = x_1x_3+x_4x_5$,
$z_{10} = (x_0-x_2)y$,
$z_{11} = (x_1-x_4)y$,
$z_{12} = (x_3-x_5)y$,
$z_{13} = y^2.$
\end{center}
By using Macaulay2, we find that the threefold $W_{P}^{13}$ has ideal generated by the following $42$ quadratic polynomials 
\begin{center}
${z}_{4} {z}_{5}-2 {z}_{0} {z}_{6}-{z}_{2} {z}_{6}+{z}_{5}{z}_{9}, \quad
{z}_{5}^{2}-{z}_{6}^{2}-{z}_{6} {z}_{7}+{z}_{5} {z}_{8},\quad 
2 {z}_{0}{z}_{5}+{z}_{3} {z}_{5}-{z}_{4} {z}_{6}-{z}_{6}{z}_{9},$

${z}_{4}^{2}-{z}_{6}^{2}-{z}_{6} {z}_{7}+{z}_{4} {z}_{9}, \quad
{z}_{4}{z}_{5}-2 {z}_{0} {z}_{6}-{z}_{1} {z}_{6}+{z}_{4} {z}_{8},\quad
-2 {z}_{0}{z}_{5}-{z}_{1} {z}_{5}+{z}_{4} {z}_{6}+{z}_{4} {z}_{7},$

$2 {z}_{0}{z}_{4}+{z}_{3} {z}_{4}-{z}_{5} {z}_{6}-{z}_{6} {z}_{8},\quad
2 {z}_{0}{z}_{4}+{z}_{2} {z}_{4}-{z}_{5} {z}_{6}-{z}_{5} {z}_{7},\quad
4{z}_{0}^{2}-{z}_{4}^{2}-{z}_{5}^{2}+{z}_{6} {z}_{7},$

${z}_{5}{z}_{10}-{z}_{4} {z}_{11}+2 {z}_{0} {z}_{12},\quad
-{z}_{6} {z}_{10}+2 {z}_{0}{z}_{11}-{z}_{4} {z}_{12},\quad
2 {z}_{0} {z}_{10}-{z}_{6} {z}_{11}+{z}_{5}{z}_{12},$

$2 {z}_{0} {z}_{4}-2 {z}_{5} {z}_{6}+2 {z}_{0} {z}_{9},\quad
2 {z}_{0}{z}_{5}-2 {z}_{4} {z}_{6}+2 {z}_{0} {z}_{8},\quad
-2 {z}_{4} {z}_{5}+2 {z}_{0}{z}_{6}+2 {z}_{0} {z}_{7},$

$2 {z}_{0} {z}_{3}+{z}_{4}^{2}+{z}_{5}^{2}-2{z}_{6}^{2}-{z}_{6} {z}_{7},\quad
2 {z}_{0}{z}_{2}+{z}_{4}^{2}-{z}_{5}^{2}-{z}_{6} {z}_{7},\quad
2 {z}_{0}{z}_{1}-{z}_{4}^{2}+{z}_{5}^{2}-{z}_{6} {z}_{7},$

${z}_{12}^{2}+2 {z}_{0}{z}_{13}-{z}_{3} {z}_{13},\quad
{z}_{11} {z}_{12}+{z}_{4} {z}_{13}-{z}_{9}{z}_{13}, \quad
{z}_{10} {z}_{12}-{z}_{5} {z}_{13}+{z}_{8} {z}_{13},$

${z}_{4}{z}_{10}-{z}_{5} {z}_{11}+{z}_{7} {z}_{12},\quad
{z}_{11}^{2}+2 {z}_{0}{z}_{13}-{z}_{2} {z}_{13},\quad
{z}_{10} {z}_{11}+{z}_{6} {z}_{13}-{z}_{7}{z}_{13},$

$-{z}_{5} {z}_{10}+{z}_{9} {z}_{11}-{z}_{2} {z}_{12},\quad
-{z}_{4}{z}_{10}+{z}_{8} {z}_{11}-{z}_{6} {z}_{12},\quad
-{z}_{6} {z}_{10}+{z}_{3}{z}_{11}-{z}_{9} {z}_{12},$

${z}_{10}^{2}+2 {z}_{0} {z}_{13}-{z}_{1}{z}_{13},\quad
{z}_{9} {z}_{10}-{z}_{5} {z}_{11}+{z}_{6} {z}_{12},\quad
{z}_{8}{z}_{10}-{z}_{4} {z}_{11}+{z}_{1} {z}_{12},$

${z}_{7} {z}_{10}-{z}_{1}{z}_{11}+{z}_{4} {z}_{12},\quad
{z}_{3} {z}_{10}-{z}_{6} {z}_{11}+{z}_{8}{z}_{12},\quad
{z}_{2} {z}_{10}-{z}_{7} {z}_{11}+{z}_{5} {z}_{12},$

$-{z}_{4}{z}_{5}+2 {z}_{0} {z}_{6}-{z}_{3} {z}_{6}+{z}_{8} {z}_{9},\quad
2 {z}_{0}{z}_{5}-{z}_{2} {z}_{5}-{z}_{4} {z}_{6}+{z}_{7} {z}_{9},\quad
2 {z}_{0}{z}_{4}-{z}_{5} {z}_{7}-{z}_{6} {z}_{8}+{z}_{1} {z}_{9},$

$2 {z}_{0}{z}_{4}-{z}_{1} {z}_{4}-{z}_{5} {z}_{6}+{z}_{7} {z}_{8},\quad
-{z}_{1}{z}_{5}+{z}_{4} {z}_{6}+{z}_{2} {z}_{8}-{z}_{6} {z}_{9},$

$2 {z}_{4}{z}_{5}-2 {z}_{0} {z}_{6}-{z}_{1} {z}_{6}-{z}_{2} {z}_{6}+{z}_{3}{z}_{7},\quad
{z}_{2} {z}_{3}+{z}_{5}^{2}-{z}_{6} {z}_{7}-{z}_{9}^{2},$

${z}_{1}{z}_{3}+{z}_{4}^{2}-{z}_{6} {z}_{7}-{z}_{8}^{2},\quad
{z}_{1}{z}_{2}+{z}_{4}^{2}+{z}_{5}^{2}-{z}_{6}^{2}-{z}_{6} {z}_{7}-{z}_{7}^{2}.$
\end{center}
Furthermore, $W_{P}^{13}$ has the following five singular points
$$P_1 := \pi (v_1) = 
\left[1: 2: 2: 2: 2: 2: 2: 2: 2: 2: 0: 0: 0: 0 \right],$$
$$P_2 := \pi (v_2) = 
\left[1: 2: 2: 2: 2: -2: -2: -2: -2: 2: 0: 0: 0: 0\right],$$
$$P_3 := \pi (v_3) = 
\left[1: 2: 2: 2: -2: 2: -2: -2: 2: -2: 0: 0: 0: 0\right],$$
$$P_4 := \pi (v_4) = 
\left[1: 2: 2: 2: -2: -2: 2: 2: -2: -2: 0: 0: 0: 0\right],$$
$$P_5 := \pi (v) = 
\left[0 : 0 : 0 : 0 :  0 :  0 :  0 :  0 :  0 :  0 : 0 : 0 : 0 : 1\right].$$

\begin{proposition}\label{prop:cones1234p13}
If $i=1,2,3,4$, the tangent cone $TC_{P_i} W_{P}^{13}$ to $W_{P}^{13}$ at the point $P_{i}$ is a cone over a Veronese surface.
\end{proposition}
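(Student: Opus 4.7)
The plan is to follow the same strategy as the proof of Proposition~\ref{prop:cones1234p17}. For each $i\in\{1,2,3,4\}$, I view $P_i$ as the origin of an affine chart $U_i\subset \mathbb{P}^{13}$, dehomogenize the $42$ quadratic generators of the ideal of $W_P^{13}$ in that chart, and extract their minimal-degree homogeneous components. These initial forms generate the ideal of the tangent cone $TC_{P_i}(W_P^{13})$, and it remains to recognize this ideal as that of a cone with vertex $P_i$ over a Veronese surface.

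The first technical step is a coordinate change. Unlike in the $p=17$ case, where the points $P_1,\ldots,P_4$ are coordinate vertices and the relevant affine chart is simply $\{z_i\neq 0\}$, here each $P_i$ has many nonzero coordinates (all four of $P_1,P_2,P_3,P_4$ are of the form $[1:2:2:2:\pm2:\pm2:\pm2:\pm2:\pm2:\pm2:0:0:0:0]$). I would therefore begin by applying a linear automorphism of $\mathbb{P}^{13}$ sending $P_i$ to the coordinate vertex $[1:0:\cdots:0]$, so that in the chart $\{z_0'\neq 0\}$ the point $P_i$ becomes the origin. After substitution, the constant parts of the $42$ generators vanish because $P_i\in W_P^{13}$, and what remains are polynomials of degree at most two whose initial forms I would then compute.

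The second step is the Macaulay2 computation and the recognition of the Veronese ideal. Based on the dimension count already present in the $p=17$ case, I expect the resulting ideal to consist of seven linear forms — cutting the affine chart $\mathbb{A}^{13}$ down to an $\mathbb{A}^{6}$ passing through $P_i$ along the vertex direction — together with six quadratic forms in the remaining six projective coordinates. The concluding check is that these six quadrics, up to a linear change of coordinates on the $\mathbb{P}^5$ they span, are the $2\times 2$ minors of a generic symmetric $3\times 3$ matrix, which is the classical ideal-theoretic description of $\nu_2(\mathbb{P}^2)\subset \mathbb{P}^5$. This matches exactly the pattern of the four ideals displayed in the proof of Proposition~\ref{prop:cones1234p17}, where the six Veronese minors already appear in explicit form.

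The main obstacle is not conceptual but computational: propagating the coordinate changes through the $42$ generators four times over is tedious. Two remarks may cut down the work. First, the computation can be streamlined by working with the pullback of the ideal via $\varphi$ in the coordinates of $\mathbb{P}^{7}$, where the preimages $v_1,v_2,v_3,v_4$ of $P_1,\ldots,P_4$ are much simpler points and where $V$ is a cone over the sextic Del Pezzo surface $S_6$; near each $v_i$ the threefold $V$ looks locally like a cone over the quadric tangent cone of $S_6$ at $d_i$, and the quotient by $\tau$ is étale near $v_i$ since $v_i$ is a fixed point not of the vertex type. Second, the automorphism group of $S_6$ (the Weyl group $W(A_2\times A_1)$, acting as a subgroup of $\mathfrak{S}_4$ on $\{d_1,d_2,d_3,d_4\}$) acts transitively on these four points, and this action extends to a linear action on $\mathbb{P}^{13}$ preserving $W_P^{13}$; hence it suffices to verify the Veronese tangent cone structure at a single $P_i$, say $P_1$, and transport the result to the remaining three points by symmetry.
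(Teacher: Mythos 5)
Your proposal follows essentially the same route as the paper: the paper performs exactly the linear change of coordinates $z_0=w_0$, $z_i=w_i+2w_0$ ($i=1,\dots,9$), $z_j=w_j$ ($j=10,\dots,13$) to place $P_1$ at $[1:0:\cdots:0]$, computes the initial-form ideal in the chart $\{w_0\neq 0\}$ with Macaulay2, and finds precisely the seven linear forms plus six quadrics cutting out a Veronese surface in a $\mathbb{P}^5$ that you predict, handling $P_2,P_3,P_4$ by ``similar analysis.'' Your two suggested shortcuts (computing upstairs on $V\subset\mathbb{P}^7$ near the $v_i$, and transporting the result by the symmetry permuting $d_1,\dots,d_4$) are sensible optional streamlinings not used in the paper, but they do not change the substance of the argument.
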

\begin{proof}
Let us consider the following change of coordinates of $\mathbb{P}^{13}$
$$z_0 = w_0, \quad
z_i = w_i + 2w_0, \quad 
z_{j} = w_{j}, \quad i=1,\dots , 9,\, j=10,\dots , 13.$$
With respect to the new system of coordinates $\left[w_0:\dots :w_{13}\right]$ of $\mathbb{P}^{13}$, the point $P_1$ has coordinates $\left[1: 0: \dots : 0 \right]$ and
can be viewed as the origin of the open affine set $U_0 := \{w_0 \ne 0\}$.
The ideal of the tangent cone $TC_{P_1}(W_{P}^{13}\cap U_0)$ is generated by the minimal degree homogeneous parts of all the polynomials in the ideal of $W_{P}^{13} \cap U_0$.
Via Macaulay, $TC_{P_1}(W_{P}^{13}\cap U_0)$ is found to have ideal generated by 
\begin{center}
$-9 {w}_{1}+8 {w}_{7}+8 {w}_{8}-4 {w}_{9},\quad $
$-9 {w}_{2}+8{w}_{7}-4 {w}_{8}+8 {w}_{9}, \quad$
$-9 {w}_{3}-4 {w}_{7}+8 {w}_{8}+8 {w}_{9},$

$-9{w}_{4}+2 {w}_{7}+2 {w}_{8}-{w}_{9},\quad$
$-9 {w}_{5}+2 {w}_{7}-{w}_{8}+2{w}_{9},\quad$
$-9 {w}_{6}-{w}_{7}+2 {w}_{8}+2{w}_{9},$

${w}_{10}-{w}_{11}+{w}_{12},\quad $
$9 {w}_{11} {w}_{12}+2 {w}_{7}{w}_{13}+2 {w}_{8} {w}_{13}-10 {w}_{9} {w}_{13},$

$2 {w}_{7} {w}_{11}-10{w}_{8} {w}_{11}+2 {w}_{9} {w}_{11}-10 {w}_{7} {w}_{12}+2 {w}_{8}{w}_{12}+2 {w}_{9} {w}_{12},$

$6 {w}_{7} {w}_{11}-6 {w}_{8} {w}_{11}-18{w}_{9} {w}_{11}+6 {w}_{7} {w}_{12}-6 {w}_{8} {w}_{12}+18 {w}_{9}{w}_{12},$

$9 {w}_{12}^{2}+4 {w}_{7} {w}_{13}-8{w}_{8} {w}_{13}-8 {w}_{9} {w}_{13}, \quad 
9 {w}_{11}^{2}-8 {w}_{7} {w}_{13}+4{w}_{8} {w}_{13}-8 {w}_{9} {w}_{13},$

${w}_{7}^{2}-2 {w}_{7} {w}_{8}+{w}_{8}^{2}-2 {w}_{7} {w}_{9}-2{w}_{8} {w}_{9}+{w}_{9}^{2}.$
\end{center}
Hence $TC_{P_1}W_{P}^{13}$ is a cone with vertex at $P_1$ over a Veronese surface in 
\begin{center}
$\{ w_0=0,\,  -9 {w}_{1}+8 {w}_{7}+8 {w}_{8}-4 {w}_{9} = 0, \, -9 {w}_{2}+8{w}_{7}-4 {w}_{8}+8 {w}_{9}=0,$

$-9 {w}_{3}-4 {w}_{7}+8 {w}_{8}+8 {w}_{9}=0,\, -9{w}_{4}+2 {w}_{7}+2 {w}_{8}-{w}_{9}=0,$

$-9 {w}_{5}+2 {w}_{7}-{w}_{8}+2{w}_{9}=0,\, -9 {w}_{6}-{w}_{7}+2 {w}_{8}+2{w}_{9}=0,\, {w}_{10}-{w}_{11}+{w}_{12}=0 \}\cong \mathbb{P}^{5}.$
\end{center}
Similar analysis for the points $P_2$, $P_3$ and $P_4$.
\end{proof}

\begin{theorem}\label{thm:cone5p13}
The tangent cone $TC_{P_5} W_{P}^{13}$ to $W_{P}^{13}$ at the point $P_{5}$ is a cone over a reducible quintic surface $M_5$, which is given by the union of five planes $\pi_{0},\pi_{1},\pi_{2},\pi_{3},\pi_{4}$, such that the four planes $\pi_{1},\pi_{2},\pi_{3},\pi_{4}$ intersect the plane $\pi_0$ along the four edges of a quadrilateral.
\begin{figure}[h]
\centering
\includegraphics[scale=0.5]{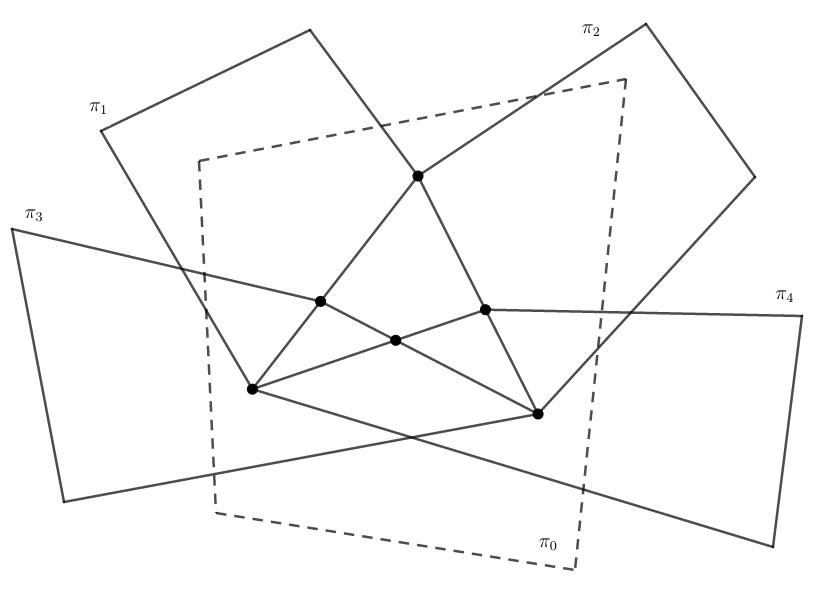}
\caption{The reducible quintic surface $M_5\subset \mathbb{P}^{6}$ given by the union of five planes $\pi_{0}$, $\pi_{1}$, $\pi_{2}$, $\pi_{3}$, $\pi_{4}$, which intersect as in the statement of Theorem~\ref{thm:cone5p13}.}
\label{fig:quinticM}
\end{figure}
\end{theorem}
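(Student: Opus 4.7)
The plan is to mirror the strategy used for the analogous Theorem~\ref{thm:cone5p17}. I would view $P_5=[0:\dots:0:1]$ as the origin of the affine chart $U_{13}:=\{z_{13}\ne 0\}$, dehomogenize the $42$ quadratic generators of the ideal of $W_{P}^{13}$ by setting $z_{13}=1$, and use Macaulay2 to extract from each the homogeneous component of lowest degree; these pieces generate the ideal of $TC_{P_5}(W_{P}^{13}\cap U_{13})$, and the remaining task is to identify and decompose the resulting projective variety.

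Inspecting the list of generators, exactly six of them are linear in $z_{13}$, namely the relations coming from $z_{10}^{2}$, $z_{11}^{2}$, $z_{12}^{2}$, $z_{10}z_{11}$, $z_{10}z_{12}$ and $z_{11}z_{12}$; these contribute six independent linear forms which impose the identifications $z_1=z_2=z_3=2z_0$, $z_7=z_6$, $z_8=z_5$ and $z_9=z_4$, so the tangent cone sits naturally inside a $\mathbb{P}^{6}$ with coordinates $[z_0:z_4:z_5:z_6:z_{10}:z_{11}:z_{12}]$. Substituting these identifications into the remaining $36$ generators makes most of them redundant and leaves, essentially, two packages of relations: quadrics purely in $(z_0,z_4,z_5,z_6)$ of the shape $z_4^{2}=z_5^{2}=z_6^{2}=4z_0^{2}$ together with $z_4z_5=2z_0z_6$, $z_4z_6=2z_0z_5$, $z_5z_6=2z_0z_4$; and four bilinear relations mixing $(z_0,z_4,z_5,z_6)$ with $(z_{10},z_{11},z_{12})$.

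The decomposition then reads off cleanly: the quadrics in $(z_0,z_4,z_5,z_6)$ break into five components, namely the vertex $\{z_0=z_4=z_5=z_6=0\}$ and the four isolated points $(2z_0:z_4:z_5:z_6)=(\varepsilon_1\varepsilon_2:\varepsilon_1:\varepsilon_2:1)$ for $(\varepsilon_1,\varepsilon_2)\in\{\pm 1\}^{2}$. On the vertex component the four bilinear relations vanish identically, so $(z_{10},z_{11},z_{12})$ are free and describe a plane $\pi_0\cong\mathbb{P}^{2}$; on each of the four point-components the four bilinear relations all collapse into a single linear condition, which turns out to be one of the four distinct equations $\pm z_{10}\pm z_{11}\pm z_{12}=0$, and this cuts out a plane $\pi_i$ meeting $\pi_0$ along the corresponding line. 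A short check shows that these four lines in $\pi_0$ are pairwise distinct and no three are concurrent, so they form the four sides of a quadrilateral.

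The main obstacle I foresee is the bookkeeping in the quadratic reduction step: with $36$ residual quadrics it is easy to overlook a component by hand, so I would cross-check the decomposition by invoking \texttt{decompose} on the tangent cone ideal directly in Macaulay2 and by verifying that the resulting scheme has degree $5$, as expected for a union of five planes; this would also confirm that the multiplicity of $P_5$ on $W_P^{13}$ equals $5$, in agreement with the quintuple-point claim of the introductory theorem.
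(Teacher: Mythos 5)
Your proposal follows essentially the same route as the paper: pass to the affine chart $U_{13}$, compute the tangent cone at the origin with Macaulay2, observe that the six generators involving $z_{13}$ contribute the linear forms cutting out the ambient $\mathbb{P}^{6}$, and decompose the residual quadrics into five planes; your hand decomposition (the vertex plane $\pi_0$ together with the four sign-choice planes carrying the conditions $\pm z_{10}\pm z_{11}\pm z_{12}=0$) matches the paper's explicit list of components exactly. The one caution is that the lowest-degree forms of a chosen generating set need not generate the full ideal of initial forms, so one should rely on a standard-basis computation as Macaulay2's \texttt{tangentCone} does (and as the paper implicitly does); your planned cross-check via \texttt{decompose} and the degree-$5$ verification would in any case catch a discrepancy.
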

\begin{proof}
The point $P_5$ can be viewed as the origin of the open affine set given by $U_{13} := \{z_{13} \ne 0\}$. The ideal of the tangent cone $TC_{P_5}(W_{P}^{13}\cap U_{13})$ is generated by the minimal degree homogeneous parts of all the polynomials in the ideal of $W_{P}^{13} \cap U_{13}$. 
By using Macaulay2, we find that $TC_{P_5}(W_{P}^{13}\cap U_{13})$ has ideal generated by the following polynomials
\begin{center}
${z}_{6}-{z}_{7},\quad {z}_{5}-{z}_{8},\quad {z}_{4}-{z}_{9},\quad 
{z}_{2}-{z}_{3},\quad {z}_{1}-{z}_{3}, \quad 2{z}_{0}- {z}_{3}, \quad {z}_{9}{z}_{10}-{z}_{8} {z}_{11}+{z}_{7} {z}_{12},$

${z}_{8} {z}_{10}-{z}_{9}{z}_{11}+{z}_{3} {z}_{12}, \quad 
{z}_{7} {z}_{10}-{z}_{3} {z}_{11}+{z}_{9}{z}_{12},\quad
{z}_{3} {z}_{10}-{z}_{7} {z}_{11}+{z}_{8}{z}_{12},$

${z}_{8}^{2}-{z}_{9}^{2},\quad {z}_{7}^{2}-{z}_{9}^{2}, \quad {z}_{3}^{2}-{z}_{9}^{2}, \quad 
{z}_{7} {z}_{8}-{z}_{3} {z}_{9},\quad
{z}_{3}{z}_{8}-{z}_{7} {z}_{9},\quad
{z}_{3} {z}_{7}-{z}_{8}{z}_{9}.$
\end{center}
Hence $TC_{P_5}W_{P}^{13}$ is a cone with vertex at $P_5$ over a surface $M_5$ contained in 
the $\mathbb{P}^6$ defined by $\{z_{13}=0,\, {z}_{6}={z}_{7},\, {z}_{5}={z}_{8},\, {z}_{4}={z}_{9},\, 
{z}_{2}={z}_{3},\, {z}_{1}={z}_{3}, \, 2{z}_{0}= {z}_{3}\}.$
This surface $M_5$, an idea of which is given in Figure~\ref{fig:quinticM}, is the union of the following five planes:
\begin{center}
$\pi_{0} := \{z_i = 0 | i\ne 10,11\},$

$\pi_{1} := \{
2 {z}_{0}={z}_{1}={z}_{2}={z}_{3}={z}_{4}={z}_{5}={z}_{6}={z}_{7}={z}_{8}={z}_{9},\,$ \\
${z}_{10}={z}_{11}-{z}_{12},\, {z}_{13}=0\},$

$\pi_{2} := \{
2 {z}_{0}={z}_{1}={z}_{2}={z}_{3}={z}_{4}=-{z}_{5}=-{z}_{6}=-{z}_{7}=-{z}_{8}={z}_{9},\,$\\ 
${z}_{10}={z}_{12}-{z}_{11},\, {z}_{13}=0\},$

$\pi_{3} := \{
2 {z}_{0}={z}_{1}={z}_{2}={z}_{3}=-{z}_{4}={z}_{5}=-{z}_{6}=-z_7=z_8=-{z}_{9},\,$\\
${z}_{10}=-{z}_{11}-{z}_{12},\, {z}_{13}=0 \},$

$\pi_{4} := \{
2 {z}_{0}={z}_{1}={z}_{2}={z}_{3}=-{z}_{4}=-{z}_{5}={z}_{6}={z}_{7}=-{z}_{8}=-{z}_{9},\,$\\
${z}_{10}={z}_{11}+{z}_{12},\, {z}_{13}=0\}.$
\end{center}
\end{proof}

By Proposition~\ref{prop:cones1234p13} and Theorem~\ref{thm:cone5p13} we have that $W_P^{13}$ has five non-similar points. For completeness, let us find their configuration.
Let $l_{i,j}$ be the line joining the singular points $P_i$ and $P_j$ for $1\le i<j\le 5$. 
Thanks to Macaulay2 we find that the lines $l_{1,5}$, $l_{2,5}$, $l_{3,5}$, $l_{4,5}$ are contained in $W_{P}^{13}$, while the lines $l_{1,2}$, $l_{1,3}$, $l_{1,4}$, $l_{2,3}$, $l_{2,4}$, $l_{3,4}$ are not. Hence the five singular points $P_1$, $P_2$, $P_3$, $P_4$, $P_5$ of $W_{P}^{13}$ are associated as in 
Table~\ref{tab:proKLM}
of Appendix~\ref{app:congifuration}.

\begin{remark}\label{rem:singpro13NOCM2}
Let $bl: \operatorname{Bl}_{P_i=1,\dots 5}\mathbb{P}^{13} \to \mathbb{P}^{13}$ be the blow-up of $\mathbb{P}^{13}$ at the five singular points of $W_P^{13}$ and let $\widetilde{W}$ be the strict transform of $W_P^{13}$. Then $\widetilde{W}$ intersects the exceptional divisor $bl^{-1}(P_5)$ along a surface isomorphic to $M_5$, which has six singular points locally given by the intersection of three planes of $\mathbb{P}^4$, such that two of them intersect the third along two lines and intersect each other at a point which is intersection of these two lines. Therefore $\widetilde{W}$ is not a desingularization of $W_P^{13}$, since there are six singular points infinitely near to $P_5$.
Therefore it is not enough to blow-up $W_P^{13}$ at $P_1$, $P_2$, $P_3$, $P_4$, $P_5$ to solve the singularities of $W_P^{13}$, as instead implicitly assumed by Fano and explicitly by Conte-Murre (see \cite[\S 3.10]{CoMu85}).


\end{remark}

\section{The KLM-EF 3-fold of genus 9}\label{sec:KLM}

Let us study the Enriques-Fano threefold described in (see \cite[\S 13]{KLM11}). We refer to \S~\ref{code:KLM} of Appendix~\ref{app:code} for the computational techniques we will use.
Let $\mathcal{S}$ be the linear system of the sextic surfaces of $\mathbb{P}^{3}_{\left[t_0:\dots : t_3\right]}$ having double points along the six edges $l_{ij}:=\{t_i=t_j=0\}$ of a fixed tetrahedron $T:=\{t_0t_1t_2t_3=0\}$, for $0\le i<j \le 3$. Let us denote the vertices of $T$ by $v_i:=\{t_k=0|k\ne i\}$, for $0\le i \le 3$. Then $\mathcal{S}$ defines a rational map
$\nu_{\mathcal{S}} : \mathbb{P}^{3} \dashrightarrow \mathbb{P}^{13}_{\left[w_0,\dots , w_{13}\right]}$
whose image is the Enriques-Fano threefold $W_F^{13}=W_{BS}^{13}$ studied in \S~\ref{subsec:bayle13} (see Theorem~\ref{thm:B13=F13}). 

Let us \textit{fix} a general element $\Sigma\in \mathcal{S}$ and let us take its image $S:=\nu_{\mathcal{S}}(\Sigma)$.
Then there exists a hyperplane $H_{12}\cong \mathbb{P}^{12}\subset \mathbb{P}^{13}$ such that $S=W_{F}^{13}\cap H_{12}$. By the generality of $\Sigma$, we may assume that $H_{12}$ does not pass through the singular points $P_1$, $P_2$, $P_3$, $P_4$, $P_1'$, $P_2'$, $P_3'$, $P_4'$, and so that
$H_{12}=\{\sum_{i=0}^{13}a_iw_i=0\}$,
where $a_0,a_1,a_3,a_4,a_9,a_{10},a_{12},a_{13}$ are not equal to zero. In particular, we may suppose $a_0 = 1$.
Let us blow-up $\mathbb{P}^3$ at the vertices of $T$: we obtain a smooth threefold $Y'$ and a birational morphism $bl' : Y'\to \mathbb{P}^{3}$ with exceptional divisors
$E_i := (bl')^{-1}(v_i)$, for $0\le i \le 3$. 
If $H$ denotes the pullback on $Y'$ of the hyperplane class on $\mathbb{P}^{3}$,
the strict transform of $\Sigma$ on $Y'$ is linearly equivalent to $6H-3\sum_{i=0}^{3}E_i$.
Let us blow-up $Y'$ along the strict transforms $\widetilde{l}_{ij}$ of the edges of $T$, for $0\le i<j\le 3$: we obtain a smooth threefold $Y''$ and a birational morphism $bl'' : Y'' \to Y'$ with exceptional divisors 
$F_{ij}:=(bl'')^{-1}(\widetilde{l}_{ij})$, for $0\le i<j \le 3$.
Let $\Sigma''$ be the strict transform on $Y''$ of $\Sigma$. Then $\Sigma'' \sim 6H-3\sum_{i=0}^3 \widetilde{E}_i-2\sum_{0\le i < j \le 3} F_{ij}$, where $\widetilde{E}_i$ denotes the strict transform of $E_i$, for $0\le i\le 3$, and $H$ denotes the pullback $bl''^* H$, by abuse of notation.
Let $\nu'' : Y'' \dashrightarrow W_{F}^{13}\subset \mathbb{P}^{13}$ be the birational map defined by the linear system $|\mathcal{O}_{Y''}(\Sigma'')|$ and let us take $E_{3} := \nu''(F_{23} \cap \Sigma'')$.
The hyperplane sections of $W_F^{13}\subset \mathbb{P}^{13}$ containing $\nu''(F_{23})$ correspond to the divisors on $Y''$ linearly equivalent to
$6H - 3\sum_{i=0}^3\widetilde{E}_i- 3F_{23}- \sum_{\substack{ 0\le i<j \le 3 \\ (i,j)\ne (2,3)}}2F_{ij}.$
Since
$$\left\langle \nu''(F_{23}) \right\rangle = \{w_5=w_6=w_7=w_8=w_9=w_{10}=w_{11}=w_{12}=w_{13}=0\}\cong \mathbb{P}^4,$$
then we have that
$\left\langle E_3 \right\rangle = H_{12} \cap \left\langle \nu''(F_{23}) \right\rangle \cong \mathbb{P}^{3}$, and so that $E_3 = S \cap \left\langle E_3 \right\rangle$, which is defined by the equations
\begin{center}
${w}_{0}+a_1{w}_{1}+a_2 {w}_{2}+a_3 {w}_{3}+a_4{w}_{4} = 0,$

${w}_{5}={w}_{6}={w}_{7}={w}_{8}={w}_{9}={w}_{10}={w}_{11}={w}_{12}={w}_{13}=0,$

${w}_{1} {w}_{3}+a_1{w}_{1} {w}_{4}+a_2 {w}_{2} {w}_{4}+a_3 {w}_{3}{w}_{4}+a_4 {w}_{4}^{2} =0,$ 

${w}_{2}^{2}+a_1{w}_{1} {w}_{4}+a_2 {w}_{2} {w}_{4}+a_3{w}_{3} {w}_{4}+a_4 {w}_{4}^{2}=0.$
\end{center}
Thus, $E_3$ is a quartic elliptic curve, since it is the complete intersection of two quadric surfaces of $\left\langle E_3 \right\rangle \cong \mathbb{P}^3$. 
If $\rho_{\left\langle E_3 \right\rangle} : \mathbb{P}^{13} \dashrightarrow \mathbb{P}^{9}$ denotes the projection of $\mathbb{P}^{13}$ from
$\left\langle E_3 \right\rangle \cong \mathbb{P}^3$, 
then 
$W_{KLM}^9:=\rho_{\left\langle E_3 \right\rangle}(W_F^{13}) \subset \mathbb{P}^9$
is the KLM-EF 3-fold of genus $9$.

\begin{remark}
For the construction of $W_{KLM}^9$, we have \textit{fixed} a general sextic $\Sigma\in\mathcal S$. The hyperplane sections of $W_{KLM}^9\subset \mathbb{P}^9$ correspond to the hyperplane sections of $W_F^{13}\subset \mathbb{P}^{13}$ containing $E_3$, which are images via $\nu_{\mathcal{S}}$ of the sextic surfaces $R$ of $\mathcal{S}$ which are tangent to $\Sigma$ along the two branches of $\Sigma$ intersecting at $l_{23}$. Then $W_{KLM}^9$ is the image of $\mathbb{P}^3$ via the rational map defined by the sublinear system $\mathcal{R}\subset \mathcal{S}$ of these sextic surfaces $R$. 
\end{remark}

By using Macaulay2 we find that the projection map is given by
$$\begin{tikzcd}
\left[w_0 : w_1:w_2:w_3:w_4:w_5:w_6:w_7:w_8:w_9:w_{10}:w_{11}:w_{12} : w_{13} \right] \arrow[d, mapsto, "\rho_{\left\langle E_3 \right\rangle}"]\\
 \left[ w_0+a_1w_1+a_2w_2+a_3w_3+a_4w_4:w_5:w_6:w_7:w_8:w_9:w_{10}:w_{11}:w_{12}:w_{13}\right]
\end{tikzcd}$$
and that the ideal of
$W_{KLM}^9 \subset \mathbb{P}^{9}_{\left[z_0:z_1:z_2:z_3:z_4:z_5:z_6:z_7:z_8 :z_9\right]}$ is generated by the following $16$ polynomials of degree $2$ or $3$ 
\begin{center}
${z}_{6} {z}_{8}-{z}_{5} {z}_{9},\quad {z}_{3} {z}_{8}-{z}_{2}{z}_{9},\quad {z}_{7}^{2}-{z}_{5} {z}_{9}, \quad {z}_{4} {z}_{7}-{z}_{2}{z}_{9},\quad {z}_{3} {z}_{7}-{z}_{1} {z}_{9},\quad {z}_{2} {z}_{7}-{z}_{1}{z}_{8},$ 

${z}_{4} {z}_{6}-{z}_{1} {z}_{9},\quad {z}_{2} {z}_{6}-{z}_{1}{z}_{7},\quad {z}_{4} {z}_{5}-{z}_{1} {z}_{8}, \quad {z}_{3} {z}_{5}-{z}_{1}{z}_{7}, \quad {z}_{2} {z}_{3}-{z}_{1} {z}_{4},$

$\quad$

${z}_{1} {z}_{2}+a_1{z}_{1} {z}_{3}+a_2{z}_{1} {z}_{4}+a_3 {z}_{2} {z}_{4}+a_4 {z}_{3} {z}_{4}-{z}_{0}{z}_{7},$

$\quad$

${z}_{2}^{2} {z}_{9}+a_1{z}_{1} {z}_{4} {z}_{9}+a_2{z}_{2} {z}_{4} {z}_{9}+a_3 {z}_{4}^{2} {z}_{8}+a_4 {z}_{4}^{2} {z}_{9}-{z}_{0} {z}_{8} {z}_{9},$

$\quad$
      
${z}_{1}^{2}{z}_{9}+a_1{z}_{3}^{2} {z}_{6}+a_2 {z}_{1} {z}_{3} {z}_{9}+a_3 {z}_{1} {z}_{4}{z}_{9}+a_4 {z}_{3}^{2} {z}_{9}-{z}_{0} {z}_{6} {z}_{9},$

$\quad$

${z}_{2}^{2} {z}_{5}+a_3 {z}_{2}^{2}{z}_{8}-a_2a_3 {z}_{2} {z}_{4} {z}_{8}-{z}_{0} {z}_{5} {z}_{8}+a_2 {z}_{0}
{z}_{7} {z}_{8}-a_1^2{z}_{1}^{2} {z}_{9}+(a_4-a_2^2-a_1a_3) {z}_{2}^{2} {z}_{9}++2a_1^2a_2 {z}_{1}
{z}_{3} {z}_{9}+a_1(2a_2^2-a_4) {z}_{1} {z}_{4} {z}_{9}+(2a_1a_2a_3-a_2a_4) {z}_{2} {z}_{4} {z}_{9}+2a_1a_2a_4
{z}_{3} {z}_{4} {z}_{9}++a_1{z}_{0} {z}_{5} {z}_{9}-a_1a_2 {z}_{0} {z}_{7}{z}_{9},$

$\quad$

${z}_{1}^{2} {z}_{5}+a_1{z}_{1}^{2} {z}_{6}-{z}_{0} {z}_{5} {z}_{6}+a_2{z}_{1}^{2} {z}_{7}+(a_4-a_1a_2){z}_{1}^{2} {z}_{9}-a_3^2 {z}_{2}^{2} {z}_{9}+a_1a_2a_3 {z}_{1}{z}_{3} {z}_{9}-a_3(a_4-a_2^2) {z}_{1} {z}_{4} {z}_{9}+a_2a_3^2{z}_{2} {z}_{4} {z}_{9}+a_2a_3a_4{z}_{3} {z}_{4} {z}_{9}+a_3 {z}_{0} {z}_{5} {z}_{9}-a_2a_3 {z}_{0} {z}_{7}{z}_{9}.$
\end{center}
Let us take the images of the eight quadruple points of $W_F^{13}$, by denoting them, by abuse of notation, in the following way
$$P_1 :=\rho_{\left\langle E_3 \right\rangle} (P_1') = \{z_k=0 | k\ne 5\},\,\,
P_2 :=\rho_{\left\langle E_3 \right\rangle} (P_2') = \{z_k=0 | k\ne 9\}$$
$$P_3 :=\rho_{\left\langle E_3 \right\rangle} (P_3) = \{z_k=0 | k\ne 6\},\,\,
P_4 :=\rho_{\left\langle E_3 \right\rangle} (P_4) = \{z_k=0 | k\ne 8\},$$
$$P_5 :=\rho_{\left\langle E_3 \right\rangle} (P_1) =\rho_{\left\langle E_3 \right\rangle} (P_2) =\rho_{\left\langle E_3 \right\rangle} (P_3') =\rho_{\left\langle E_3 \right\rangle} (P_4')= \{z_k=0 | k\ne 0\}.$$


\begin{proposition}\label{prop:cones1234KLM}
If $i=1,2,3,4$, the tangent cone $TC_{P_i} W_{KLM}^{9}$ to $W_{KLM}^{9}$ at the point $P_{i}$ is a cone over a Veronese surface.
\end{proposition}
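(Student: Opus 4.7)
The plan is to follow exactly the computational pattern used in the proofs of Propositions~\ref{prop:cones1234p17} and~\ref{prop:cones1234p13}. For each $i\in\{1,2,3,4\}$ the point $P_i$ has a single nonzero coordinate: $k_1=5$, $k_2=9$, $k_3=6$, $k_4=8$. First I would view $P_i$ as the origin of the open affine chart $U_i:=\{z_{k_i}\neq 0\}$, dehomogenize the $16$ generators of the ideal of $W_{KLM}^9$ given in the text, and extract the homogeneous part of minimal degree of each dehomogenized generator. By the general principle used previously, the ideal generated by these minimal-degree parts cuts out the tangent cone $TC_{P_i}(W_{KLM}^9\cap U_i)$.

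Next I would feed this ideal into Macaulay2 and expect, by analogy with the analysis at the quadruple points in \S~\ref{subsec:pro17} and \S~\ref{subsec:pro13}, to obtain $4$ linear forms cutting out a $\mathbb{P}^5\subset\mathbb{P}^9$ and $6$ quadratic forms in the remaining $6$ coordinates of exactly the standard shape $z_az_b-z_cz_d$ (six $2\times 2$ minors of a symmetric $3\times 3$ matrix of linear forms). This is precisely the ideal of a Veronese surface in $\mathbb{P}^5$, so $TC_{P_i}W_{KLM}^9$ will be a cone with vertex $P_i$ over such a Veronese.

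A conceptual sanity check that reinforces the Macaulay2 output is the following: the projection $\rho_{\langle E_3\rangle}\colon W_F^{13}\dashrightarrow W_{KLM}^9$ is birational, and the four points $P_1',P_2',P_3,P_4\in W_F^{13}$ lie outside the center of projection $\langle E_3\rangle$ (this can be verified from the defining equations of $E_3$ given in the text, using that $a_0,\dots,a_4$ are generic). Near each such point the projection restricts to an analytic local isomorphism from $W_F^{13}$ onto $W_{KLM}^9$, and since $W_F^{13}=W_{BS}^{13}$ by Theorem~\ref{thm:B13=F13} has tangent cone over a Veronese surface at each of its eight quadruple points (the standard property of BS-EF 3-folds recalled in the introduction), the tangent cone is preserved under the projection, giving Veronese cones at $P_1,P_2,P_3,P_4$.

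The only obstacle I anticipate is purely bookkeeping: the ideal of $W_{KLM}^9$ retains the symbolic parameters $a_1,a_2,a_3,a_4$, so the Groebner basis computations for the tangent cones must either be performed over $\mathbb{F}_n[a_1,a_2,a_3,a_4]$ or, more practically, after specializing $a_1,a_2,a_3,a_4$ to generic numerical values (as was done throughout the previous sections to keep the computations tractable). Once this is handled, each of the four cases is entirely parallel to the corresponding step in the proof of Proposition~\ref{prop:cones1234p17}.
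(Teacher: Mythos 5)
Your main argument --- dehomogenizing in the affine chart $U_{j(i)}$ with $j(1)=5$, $j(2)=9$, $j(3)=6$, $j(4)=8$, taking the minimal-degree homogeneous parts of the generators, and identifying the Macaulay2 output as three coordinate hyperplanes together with the six quadrics of a Veronese surface in a $\mathbb{P}^5$ --- is exactly the proof given in the paper (which records, e.g., $z_9,\,z_4,\,z_3,\,z_7^2-z_6z_8,\dots,z_1^2-z_0z_6$ for $TC_{P_1}$). Your supplementary projection argument does not appear in the paper and would additionally require checking that $\rho_{\langle E_3\rangle}$ is one-to-one over each of $P_1,\dots,P_4$ (unlike over $P_5$, where four quadruple points of $W_F^{13}$ collapse to one point), but since it is offered only as a sanity check it does not affect the correctness of the main computation.
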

\begin{proof}
Each point $P_i$, for $i=1,2,3,4$, can be viewed as the origin of the open affine set $U_{j(i)} := \{z_{j(i)} \ne 0\}$, where $j(1)=5$, $j(2)=9$, $j(3)=6$, $j(4)=8$. The ideal of the tangent cone $TC_{P_i}(W_{KLM}^{9}\cap U_{j(i)})$ is generated by the minimal degree homogeneous parts of all the polynomials in the ideal of $W_{KLM}^{9} \cap U_{j(i)}$. 
Thanks to Macaulay2, we obtain the following tangent cones.

$TC_{P_1}(W_{KLM}^{9}\cap U_5)$ has ideal generated by 
${z}_{9}$, ${z}_{4}$, ${z}_{3}$, ${z}_{7}^{2}-{z}_{6}{z}_{8}$, ${z}_{2} {z}_{7}-{z}_{1} {z}_{8}$, ${z}_{2} {z}_{6}-{z}_{1}{z}_{7}$, ${z}_{2}^{2}-{z}_{0} {z}_{8}$, ${z}_{1} {z}_{2}-{z}_{0}{z}_{7}$, ${z}_{1}^{2}-{z}_{0} {z}_{6}.$
Hence $TC_{P_1}W_{KLM}^{9}$ is a cone with vertex $P_1$ over a Veronese surface in $\{ z_i = 0 | i = 3,4,5,9\}\cong \mathbb{P}^{5}$.

$TC_{P_2}(W_{KLM}^{9}\cap U_9)$ has ideal generated by 
${z}_{5}$, ${z}_{2}$, ${z}_{1}$, ${z}_{7}^{2}-{z}_{6}{z}_{8}$, ${z}_{4} {z}_{7}-{z}_{3} {z}_{8}$, ${z}_{4} {z}_{6}-{z}_{3}{z}_{7}$, 
$a_4 {z}_{4}^{2}-{z}_{0} {z}_{8}$,  $a_4 {z}_{3} {z}_{4}-{z}_{0}{z}_{7}$, $a_4 {z}_{3}^{2}-{z}_{0} {z}_{6}.$
Hence $TC_{P_2}W_{KLM}^{9}$ is a cone with vertex $P_2$ over a Veronese surface in $\{ z_i = 0 | i = 1,2,5,9 \}\cong \mathbb{P}^{5}$.

$TC_{P_3}(W_{KLM}^{9}\cap U_6)$ has ideal generated by 
${z}_{8}$, ${z}_{4}$, ${z}_{2}$, ${z}_{7}^{2}-{z}_{5}{z}_{9}$, ${z}_{3} {z}_{7}-{z}_{1} {z}_{9}$, ${z}_{3} {z}_{5}-{z}_{1}{z}_{7}$, $a_1{z}_{3}^{2}-{z}_{0} {z}_{9}$, $a_1{z}_{1} {z}_{3}-{z}_{0}{z}_{7}$, $a_1{z}_{1}^{2}-{z}_{0} {z}_{5}.$
Hence $TC_{P_3}W_{KLM}^{9}$ is a cone with vertex $P_3$ over a Veronese surface in $\{ z_i = 0 | i = 2,4,6,8 \}\cong\mathbb{P}^{5}$.

$TC_{P_4}(W_{KLM}^{9}\cap U_8)$ has ideal generated by 
${z}_{6}$, ${z}_{3}$, ${z}_{1}$, ${z}_{7}^{2}-{z}_{5}{z}_{9}$, ${z}_{4} {z}_{7}-{z}_{2} {z}_{9}$, ${z}_{4} {z}_{5}-{z}_{2}{z}_{7}$, $a_3 {z}_{4}^{2}-{z}_{0} {z}_{9}$, $a_3 {z}_{2} {z}_{4}-{z}_{0}{z}_{7}$, $a_3 {z}_{2}^{2}-{z}_{0} {z}_{5}.$
Hence $TC_{P_4}W_{KLM}^{9}$ is a cone with vertex $P_4$ over a Veronese surface in $\{ z_i = 0 | i = 1,3,6,8 \}\cong \mathbb{P}^{5}$.
\end{proof}

\begin{theorem}\label{thm:cone5KLM}
The tangent cone $TC_{P_5} W_{KLM}^{9}$ to $W_{KLM}^{9}$ at the point $P_{5}$ is a cone over a reducible sextic surface $M_6 \subset \mathbb{P}^{7} \subset \mathbb{P}^{9}$, which is given by the union of four planes $\pi_{1}$, $\pi_{2}$, $\pi_{1}'$, $\pi_{2}'$ and a quadric surface $Q \subset \mathbb{P}^{3} \subset \mathbb{P}^{7}$. In particular each one of the planes $\pi_{1}$, $\pi_{2}$, $\pi_{1}'$, $\pi_{2}'$ intersects the quadric surface $Q$ respectively along a line $l_{1}$, $l_{2}$, $l_{1}'$, $l_{2}'$, where $l_i$ is disjoint from $l_i'$, for $i=1,2$. In the other cases the intersections of two of these lines identify four points, which are
$q_{1,2} := l_1 \cap l_2$, $q_{1,2'} := l_1 \cap l_2'$, $q_{1',2} := l_1' \cap l_2$, $q_{1',2'} := l_1' \cap l_2'$.
\end{theorem}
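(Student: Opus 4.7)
The approach parallels the proof of Theorem~\ref{thm:cone5p17}. First I would view the point $P_5=[1:0:\dots:0]$ as the origin of the affine open chart $U_0=\{z_0\ne 0\}\subset\mathbb{P}^9$, dehomogenize the $16$ listed generators of the ideal of $W_{KLM}^9$ by setting $z_0=1$, and then extract the ideal of the tangent cone $TC_{P_5}(W_{KLM}^9\cap U_0)$ as the ideal generated by the minimal-degree homogeneous components of all elements of this dehomogenized ideal. In practice this computation is delegated to Macaulay2 via the \texttt{tangentCone} routine applied to the ideal of $W_{KLM}^9$ localized at $P_5$.

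Next I would inspect the output and match it to the description in the statement. Since the first eleven generators of the ideal of $W_{KLM}^9$ do not involve $z_0$, they pass unchanged into the tangent-cone ideal and already provide the Veronese-type quadratic relations among the coordinates $z_1,\dots,z_9$; among these, the $2\times 2$ minors of the relevant catalecticant matrix should survive the subsequent elimination and describe the quadric surface $Q\subset\mathbb{P}^3$. The twelfth (quadratic) generator contributes the linear form $z_7$; combined with linear combinations extracted from the initial parts of the four cubic generators, this will cut the tangent cone down to a $\mathbb{P}^7\subset\mathbb{P}^9$. The residual quadratic monomial products extracted from those cubics are what force $M_6$ to decompose as the union of $Q$ and four planes $\pi_1,\pi_2,\pi_1',\pi_2'$.

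Once the primary decomposition of the tangent-cone ideal is in hand, verifying the claimed incidence pattern is elementary: from the explicit coordinate description of each component one checks that every plane meets $Q$ along a line $l_1,l_2,l_1',l_2'$, that $l_1\cap l_1'=l_2\cap l_2'=\emptyset$, and that the remaining four pairwise intersections of these lines yield the four points $q_{1,2},q_{1,2'},q_{1',2},q_{1',2'}$. The main obstacle is the presence of the free parameters $a_1,a_2,a_3,a_4$ (inherited from the choice of the hyperplane $H_{12}$ that defines the projected elliptic curve $E_3$): one must confirm that the tangent-cone structure does not depend on them. Because these parameters are all nonzero by the generality assumption on $\Sigma\in\mathcal{S}$, they only rescale nonzero terms in the initial forms of the relevant generators, so neither the linear elimination nor the primary decomposition is affected, and the geometric picture of $M_6$ is literally identical to the one obtained for $W_P^{17}$ in Theorem~\ref{thm:cone5p17}.
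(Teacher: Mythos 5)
Your proposal is correct and follows essentially the same route as the paper: both pass to the affine chart $U_0=\{z_0\ne 0\}$, compute the tangent cone at $P_5$ via Macaulay2 as the ideal of minimal-degree homogeneous parts, and read off the decomposition of $M_6$ into the four planes and the quadric $\{z_2z_3-z_1z_4=0\}$ inside the $\mathbb{P}^7$ given by $\{z_0=z_7=0\}$, checking the incidences from the explicit components. One small slip in your heuristic discussion: the four cubic generators contribute only quadratic initial forms (e.g.\ $z_8z_9$, $z_6z_9$, $z_5z_8$, $z_5z_6$), so the cut down to $\mathbb{P}^7$ comes solely from the linear form $z_7$ (out of the twelfth generator) together with the vertex hyperplane $z_0=0$, not from any linear parts of the cubics; your remark that the nonzero parameters $a_1,\dots,a_4$ do not affect the structure is a reasonable addition, though the paper's actual computation fixes numerical values for them.
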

\begin{proof}
The point $P_5$ can be viewed as the origin of the open affine set given by $U_0 := \{z_0 \ne 0\}$. The ideal of the tangent cone $TC_{P_5}(W_{KLM}^{9}\cap U_0)$ is generated by the minimal degree homogeneous parts of all the polynomials in the ideal of $W_{KLM}^{9} \cap U_0$. 
By using Macaulay2 one finds that $TC_{P_5}(W_{KLM}^{9}\cap U_0)$ has ideal generated by the following polynomials
$${z}_{7},\,{z}_{8} {z}_{9},\,{z}_{6} {z}_{9},\,{z}_{5}{z}_{9},\,{z}_{2} {z}_{9},\,{z}_{1} {z}_{9},\,
{z}_{6} {z}_{8},\,{z}_{5}{z}_{8},\,{z}_{3} {z}_{8},\,{z}_{1} {z}_{8},\,{z}_{5} {z}_{6},\,
{z}_{4}{z}_{6},\,{z}_{2} {z}_{6},\,{z}_{4} {z}_{5},\,{z}_{3} {z}_{5},$$
$${z}_{2}{z}_{3}-{z}_{1} {z}_{4}.$$
Hence $TC_{P_5}W_{KLM}^{9}$ is a cone with vertex $P_5$ over a surface $M_6$ contained in the $\mathbb{P}^{7}$ given by $\{ z_i = 0 | i = 0,7\}$. The surface $M_6$ is the union of four planes $\pi_{1}, \pi_{2}, \pi_{1}', \pi_{2}'$ and a quadric surface $Q$, where
\begin{center}
$\pi_{1} := \{z_i = 0 | i=0,1,2,5,6,7,8 \}, \quad \pi_{2} := \{z_i = 0 | i=0,1,3,5,6,7,9 \},$

$\pi_{1}' := \{z_i = 0 | i=0,3,4,6,7,8,9\}, \quad \pi_{2}' := \{z_i = 0 | i=0,2,4,5,7,8,9 \},$

$Q := \{z_i = 0 | i=0,5,6,7,8,9\} \cap \{{z}_{2}{z}_{3}-{z}_{1} {z}_{4}=0\}.$
\end{center}
We obtain the same situation as the one described in Theorem~\ref{thm:cone5p17}, and so a sextic surface $M_6$ as in Figure~\ref{fig:sexticM}.
\end{proof}




By Proposition~\ref{prop:cones1234KLM} and Theorem~\ref{thm:cone5KLM} we have that $P_1$, $P_2$, $P_3$, $P_4$, $P_5$ are non-similar singular points of $W_{KLM}^{9}$. For completeness, let us find their configuration.
Let $l_{i,j}$ be the line joining the singular points $P_i$ and $P_j$ for $1\le i<j \le 5$. Then we have
$l_{1,2}=\{z_k=0| k\ne 5,9\}$, $l_{1,3}=\{z_k=0| k\ne 5,6\}$, $l_{1,4}=\{z_k=0| k\ne 5,8\}$, $l_{1,5}=\{z_k=0| k\ne 0,5\}$, $l_{2,3}=\{z_k=0| k\ne 6,9\}$, $l_{2,4}=\{z_k=0| k\ne 8,9\}$, $l_{2,5}=\{z_k=0| k\ne 0,9\}$, $l_{3,4}=\{z_k=0| k\ne 6,8\}$, $l_{3,5}=\{z_k=0| k\ne 0,6\}$, $l_{4,5}=\{z_k=0| k\ne 0,8\}$.  
By looking at the ideal of $W_{KLM}^9\subset \mathbb{P}^9$, we deduce that the lines $l_{1,3}$, $l_{1,4}$, $l_{1,5}$, $l_{2,3}$, $l_{2,4}$, $l_{2,5}$, $l_{3,5}$, $l_{4,5}$ are contained in $W_{KLM}^{9}$, while $l_{1,4}$ and $l_{2,3}$ are not.
So the five singular points $P_1$, $P_2$, $P_3$, $P_4$, $P_5$ of $W_{KLM}^{9}$ are associated as in 
Table~\ref{tab:proKLM}
of Appendix~\ref{app:congifuration}.


\section{Projective normality}\label{subsec:normality}

Some authors define an \textit{Enriques-Fano threefold} just as a threefold satisfying the following assumption (see for example \cite[Definition 1.3]{KLM11}).

\begin{assumption}[\textbf{*}]
Let $W\subset \mathbb{P}^{N}$ be a non-degenerate threefold whose general hyperplane section $S$ is an Enriques surface and such that $W$ is not a cone over $S$. 
\end{assumption}

If the pair $(W,\mathcal{L}:=|\mathcal{O}_{W}(S)|)$ satisfies Assumption (*), it is enough to take its \textit{normalization} $\nu : W^{\nu} \to W$ to obtain an Enriques-Fano threefold in the general sense, that is $(W^{\nu}, \nu^* \mathcal{L})$. Indeed, an 
element of $\nu^* \mathcal{L}$ is ample, since it is the pullback of a very ample divisor of $\mathcal{L}$ via the finite birational morphism $\nu : W^{\nu} \to W$ (see \cite[Theorem 1.2.13]{Laz}).
Moreover if $(W^\nu, \nu^* \mathcal{L})$ were a (polarized) generalized cone,
$W^\nu$ would contain a $3$-dimensional family of curves of degree $1$ with respect to the given polarization
such that they all pass through a point:
thus, $W\subset \mathbb{P}^N$ would be the union of lines through a point,
contradicting Assumption (*).

Furthermore, we observe that if a pair $(X, \mathcal{L})$ is an Enriques-Fano threefold such that the elements of $\mathcal{L}$ are very ample divisors on $X$, then $\mathcal{L}$ defines an embedding $\phi_{\mathcal{L}} : X \hookrightarrow \mathbb{P}^p$ and $\phi_{\mathcal{L}}(X)\subset \mathbb{P}^p$ is a threefold satisfying Assumption (*).

An example of ``Enriques-Fano threefold'' 
in the sense of 
Assumption (*) is the KLM-EF 3-fold $W_{KLM}^{9} \subset \mathbb{P}^9$: 
instead of proving the normality of this threefold, Knutsen-Lopez-Mu\~{n}oz study properties of its normalization (see \cite[Proposition 13.1]{KLM11}). 
We will see below that the KLM-EF 3-fold actually is (projectively) normal.

Also the rational F-EF 3-folds $W_F^{p=6,7,9,13}\subset \mathbb{P}^p$ 
are ``Enriques-Fano threefold'' in the sense of
Assumption (*): 
indeed,
their normality is unproved, even if Fano assumed normality at the beginning of his work. The normality of the non-rational F-EF 3-fold $W_F^4$ is unproved too; however it does not exactly satisfy Assumption (*), since its hyperplane sections are not Enriques surfaces, but their minimal desingularizations are (see \cite[p.275]{CoDo89}). 
We will see below that the rational F-EF 3-folds of genus $7$, $9$ and $13$ actually are (projectively) normal.

Instead the BS-EF 3-folds and the P-EF 3-folds are normal
by construction, 
since they are quotient of normal threefolds under the action of a finite group defined by a certain involution with a finite number of fixed points (see \cite[Proposition 2.15]{Dre04}).
In particular, the BS-EF 3-folds with very ample hyperplane sections satisfy Assumption (*) in the projective space in which they are embedded, while the other eight BS-EF 3-folds are Enriques-Fano threefolds satisfying exactly the abstract definition. Furthermore, as we saw in \S~\ref{subsec:pro17},~\ref{subsec:pro13} the P-EF 3-folds $W_P^{p=13,17}$ can be embedded in $\mathbb{P}^{p}$ and they also satisfy Assumption (*).

\begin{definition}\label{def:Reye}
Let $\mathcal{R}$ be a $3$-dimensional linear system of quadric sufaces of $\mathbb{P}^3$. Let us suppose that $\mathcal{R}$ is sufficiently general, i.e. $\mathcal{R}$ is base point free and, if $l$ is a double line for $Q \in \mathcal{R}$, then $Q$ is the unique quadric in $\mathcal{R}$ containing $l$. A \textit{Reye congruence} is a surface obtained as the set
$$\{ l \in \mathbb{G}(1,3) | l \text{ is contained in a pencil contained in } \mathcal{R} \},$$ 
where $\mathbb{G}(1,3)$ denotes the Grassmannian variety of lines in $\mathbb{P}^3$. 
\end{definition}

\begin{theorem}\label{thm:projnorm}
Let $W\subset \mathbb{P}^{N}$ be a threefold satisfying Assumption (*). If $S\subset \mathbb{P}^{N-1}$ is linearly normal and if either $N\ge 7$ or $N = 6$ and $S$ is not a Reye congruence, then $h^1(\mathcal{O}_{W})=0$ and $W\subset \mathbb{P}^N$ is projectively normal.
\end{theorem}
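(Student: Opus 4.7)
The plan is to reduce projective normality of $W\subset\mathbb{P}^N$ to that of a general hyperplane section $S\subset\mathbb{P}^{N-1}$, and then to invoke a known result on linearly normal Enriques surfaces. First I would pick a general hyperplane $H$ cutting $W$ transversely in $S$, and work with the standard ideal-sheaf sequence
$$0 \to \mathcal{I}_{W/\mathbb{P}^N}(k-1) \to \mathcal{I}_{W/\mathbb{P}^N}(k) \to \mathcal{I}_{S/\mathbb{P}^{N-1}}(k) \to 0$$
for each $k\geq 1$. Projective normality of $W$ is equivalent to the vanishing $H^1(\mathcal{I}_{W/\mathbb{P}^N}(k))=0$ for all $k\geq 0$. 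The case $k=0$ follows from irreducibility of $W$. The inductive step then proceeds as follows: once $H^1(\mathcal{I}_{W/\mathbb{P}^N}(k-1))=0$, the long exact sequence above embeds $H^1(\mathcal{I}_{W/\mathbb{P}^N}(k))$ into $H^1(\mathcal{I}_{S/\mathbb{P}^{N-1}}(k))$, so the whole problem reduces to the projective normality of $S$.

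The next step is the Enriques-surface input: $S$ is a linearly normal Enriques surface of sectional genus $N$, and its projective normality is known (through the theory of adjoint linear systems of Gallego--Purnaprajna, with a separate treatment of the borderline case by Knutsen) precisely in the regime of the hypotheses: always for $N\geq 7$, and for $N=6$ exactly when $S$ is not a Reye congruence. The Reye congruence is the genuine exception, where $\operatorname{Sym}^2 H^0(S,\mathcal{O}_S(1))$ fails to surject onto $H^0(S,\mathcal{O}_S(2))$, which is why the statement must exclude it.

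For the cohomological vanishing $h^1(\mathcal{O}_W)=0$, the sequence $0\to\mathcal{I}_W\to\mathcal{O}_{\mathbb{P}^N}\to\mathcal{O}_W\to 0$ gives $h^1(\mathcal{O}_W)=h^2(\mathcal{I}_W)$. I would then run a downward induction on $k$ applied to the same ideal-sheaf sequence: Serre vanishing yields $H^2(\mathcal{I}_W(k))=0$ for $k\gg 0$, and the three-term piece
$$H^1(\mathcal{I}_{S/\mathbb{P}^{N-1}}(k)) \to H^2(\mathcal{I}_W(k-1)) \to H^2(\mathcal{I}_W(k))$$
propagates the vanishing down to $k=0$, again thanks to the projective normality of $S$.

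The hard part here is not the cohomological bookkeeping, which is routine once the reduction is set up, but the Enriques-surface input: projective normality of linearly normal Enriques surfaces is delicate, and the Reye congruence is precisely the canonical counterexample that the hypotheses are designed to rule out.
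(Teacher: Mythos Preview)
Your approach is essentially the paper's: both reduce to the projective normality of $S$ and then lift to $W$ via the ideal-sheaf sequence, which is exactly the content of the cited \cite[Lemmas~1.5--1.7]{CoMu87} (following Epema) that the paper invokes without spelling out.

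Two corrections. First, the Enriques-surface input is \cite[Theorem~1.1]{GLM02} (Giraldo--Lopez--Mu\~noz), not Gallego--Purnaprajna or Knutsen; that is the reference the paper uses and the one that isolates the Reye congruence as the sole exception. Second, and more substantively, your claimed equivalence ``projective normality of $W$ $\Leftrightarrow$ $H^1(\mathcal{I}_{W}(k))=0$ for all $k\ge 0$'' presupposes that $W$ is already normal, which Assumption~(*) does \emph{not} grant. The paper applies this theorem precisely to deduce normality of threefolds such as $W_{KLM}^9$ and $W_F^{7,9,13}$, whose normality was previously open; so the conclusion ``projectively normal'' here genuinely includes normality of $W$. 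Your induction delivers the arithmetic-normality half (surjectivity of $H^0(\mathcal{O}_{\mathbb{P}^N}(k))\to H^0(\mathcal{O}_W(k))$) and the vanishing $h^1(\mathcal{O}_W)=0$, but the normality of $W$ itself is additional content carried by the Conte--Murre/Epema lemmas that your write-up does not supply.
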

\begin{proof}
Since the case where $N=6$ and $S$ is a Reye congruence is excluded, we have that $S\subset \mathbb{P}^{N-1}$ is projectively normal (see \cite[Theorem 1.1]{GLM02}). Thus, by using the arguments of \cite[Lemmas 1.5,1.6,1.7]{CoMu87} (which are inspired by the ones of \cite[pp. 10-11]{Epema}), we obtain that $h^1(\mathcal{O}_{W})=0$ and that $W\subset \mathbb{P}^{N}$ is projectively normal.
\end{proof}

\begin{proposition}\label{prop:linnormS}
Let $W\subset \mathbb{P}^{N}$ be a threefold satisfying Assumption (*). If $W\subset \mathbb{P}^N$ is linearly normal and $h^1(\mathcal{O}_{W})=0$, then $S\subset \mathbb{P}^{N-1}$ is linearly normal.
\end{proposition}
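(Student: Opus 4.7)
The plan is to exploit the short exact sequence associated to the hyperplane section $S\subset W$. Since $S$ is cut out on $W$ by a general hyperplane of $\mathbb{P}^N$, the ideal sheaf of $S$ in $W$ is isomorphic to $\mathcal{O}_W(-1)$, so twisting by $\mathcal{O}_W(1)$ yields
\[
0\to \mathcal{O}_W \to \mathcal{O}_W(1)\to \mathcal{O}_S(1)\to 0.
\]
I would take the associated long exact sequence in cohomology and feed in the hypothesis $h^1(\mathcal{O}_W)=0$ to transfer information from $W$ to $S$.

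Concretely, from the sequence
\[
0\to H^0(\mathcal{O}_W)\to H^0(\mathcal{O}_W(1))\to H^0(\mathcal{O}_S(1))\to H^1(\mathcal{O}_W)=0
\]
one obtains $h^0(\mathcal{O}_S(1))=h^0(\mathcal{O}_W(1))-h^0(\mathcal{O}_W)$. Since $W$ is an irreducible projective threefold (Assumption (*)), $h^0(\mathcal{O}_W)=1$; by the linear normality of $W\subset \mathbb{P}^N$ we have $h^0(\mathcal{O}_W(1))=N+1$. Therefore $h^0(\mathcal{O}_S(1))=N$, which has the right value for $S\subset \mathbb{P}^{N-1}$ to be linearly normal.

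To upgrade this numerical equality to actual linear normality one needs to know that $S$ is non-degenerate in $\mathbb{P}^{N-1}$, so that the restriction map $H^0(\mathbb{P}^{N-1},\mathcal{O}(1))\to H^0(\mathcal{O}_S(1))$ is injective. This is a standard Bertini-type observation: if $S$ were contained in a hyperplane $H''$ of $H=\mathbb{P}^{N-1}$, write $H''=H\cap H_1$ with $H_1$ a hyperplane of $\mathbb{P}^N$ distinct from $H$; then the irreducible reduced effective divisor $S=W\cap H$ on $W$ would be contained in the effective divisor $W\cap H_1$ of the same class $\mathcal{O}_W(1)$, forcing $W\cap H=W\cap H_1$ and hence $H=H_1$ (or else $W\subset H_1$), either way a contradiction with the non-degeneracy of $W$. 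Once $S$ is non-degenerate, the restriction map is an injection between spaces of equal dimension $N$, hence an isomorphism, which is exactly the linear normality of $S\subset \mathbb{P}^{N-1}$.

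The argument is essentially a two-line cohomology computation; the only step requiring care is the non-degeneracy check on $S$, and the key hypotheses $h^1(\mathcal{O}_W)=0$ and linear normality of $W$ are used precisely to convert the count $h^0(\mathcal{O}_W(1))=N+1$ into $h^0(\mathcal{O}_S(1))=N$. Normality of $W$ itself is never invoked, which is consistent with the setting of Assumption (*).
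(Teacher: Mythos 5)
Your proof is correct and follows essentially the same route as the paper: the same short exact sequence $0\to\mathcal{O}_W\to\mathcal{O}_W(1)\to\mathcal{O}_S(1)\to 0$ together with $h^0(\mathcal{O}_W)=1$, $h^1(\mathcal{O}_W)=0$ and $h^0(\mathcal{O}_W(1))=N+1$ yields $h^0(\mathcal{O}_S(1))=N$. The only difference is that you also verify the non-degeneracy of $S$ in $\mathbb{P}^{N-1}$ so that the dimension count actually implies linear normality, a point the paper leaves implicit; this is a welcome extra detail, not a divergence in method.
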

\begin{proof}
We have to show that $h^0(\mathcal{O}_{S}(1))= h^0(\mathcal{O}_{\mathbb{P}^{N-1}}(1))=N$.
This follows by the following exact sequence
$$0 \to \mathcal{O}_{W} \to \mathcal{O}_{W}(1) \to \mathcal{O}_{S}(1) \to 0,$$
since $h^0(\mathcal{O}_W)=1$, $h^1(\mathcal{O}_W)=0$ and $h^0(\mathcal{O}_W(1))=h^0(\mathcal{O}_{\mathbb{P}^N}(1))=N+1$ by hypothesis.
\end{proof} 

\begin{corollary}\label{cor:projnormW}
Let $W\subset \mathbb{P}^{N}$ be a threefold satisfying Assumption (*). If $W\subset \mathbb{P}^N$ is linearly normal and $h^1(\mathcal{O}_{W})=0$, then $W\subset \mathbb{P}^{N}$ is projectively normal (except when $N = 6$ and $S$ is a Reye congruence).
\end{corollary}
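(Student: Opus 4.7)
The plan is to obtain the corollary as an immediate consequence of the two preceding results, chained in the correct order. The hypotheses of the corollary are $W\subset \mathbb{P}^N$ linearly normal and $h^1(\mathcal{O}_W)=0$, while Theorem~\ref{thm:projnorm} requires linear normality of the \emph{hyperplane section} $S\subset \mathbb{P}^{N-1}$. So the first step is to bridge this gap using Proposition~\ref{prop:linnormS}.

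Concretely, I would first apply Proposition~\ref{prop:linnormS} directly to the hypotheses: since $W\subset \mathbb{P}^N$ is linearly normal and $h^1(\mathcal{O}_W)=0$, the proposition yields that the general hyperplane section $S\subset \mathbb{P}^{N-1}$ is linearly normal (the proof of that proposition uses the long exact sequence obtained from $0\to \mathcal{O}_W\to \mathcal{O}_W(1)\to \mathcal{O}_S(1)\to 0$, and we simply inherit its conclusion). Once linear normality of $S$ in $\mathbb{P}^{N-1}$ is established, I would plug this into Theorem~\ref{thm:projnorm}: excluding the exceptional case $N=6$ with $S$ a Reye congruence is built into the statement of the corollary, so the theorem applies and delivers projective normality of $W\subset \mathbb{P}^N$ (and, redundantly, re-confirms $h^1(\mathcal{O}_W)=0$, which we already know).

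There is no substantive obstacle here: the corollary is essentially a repackaging of Theorem~\ref{thm:projnorm} under the alternative, often more easily checkable, hypothesis that $W$ itself (rather than $S$) is linearly normal. The only point to be careful about is the order of the implications, namely that $h^1(\mathcal{O}_W)=0$ must be invoked \emph{before} Theorem~\ref{thm:projnorm}, in order to deduce linear normality of $S$ via Proposition~\ref{prop:linnormS}; after that, everything else is automatic.
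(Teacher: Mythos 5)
Your proposal is correct and is exactly the paper's argument: the paper's proof is simply the citation ``See Theorem~\ref{thm:projnorm} and Proposition~\ref{prop:linnormS}'', i.e.\ first deduce linear normality of $S\subset\mathbb{P}^{N-1}$ from Proposition~\ref{prop:linnormS} and then apply Theorem~\ref{thm:projnorm}. You have merely made explicit the chaining order that the paper leaves implicit.
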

\begin{proof}
See Theorem~\ref{thm:projnorm} and Proposition~\ref{prop:linnormS}.
\end{proof} 

\begin{proposition}\label{prop:linnorm}
Let $W\subset \mathbb{P}^{p}$ be a threefold satisfying Assumption (*) such that $p$ is the genus of a curve section of $W$. Then $W\subset \mathbb{P}^p$ and $S\subset \mathbb{P}^{p-1}$ are linearly normal.
\end{proposition}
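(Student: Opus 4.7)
The plan is to cascade linear normality from a general curve section up through the surface section to the threefold. First I would take a general curve section $C = W \cap \Pi$, with $\Pi \cong \mathbb{P}^{p-2}$ a general codimension-$2$ linear subspace; then $C$ is a smooth irreducible non-degenerate curve of degree $\deg W = S^3 = 2p-2$ and, by hypothesis, of genus $p$. The coincidence $\deg \mathcal{O}_C(1) = 2g(C) - 2$ puts $\mathcal{O}_C(1)$ in the boundary Riemann--Roch regime, so $h^0(\mathcal{O}_C(1)) \in \{p-1, p\}$, with the larger value occurring exactly when $\mathcal{O}_C(1) \cong K_C$.

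The crucial step is to rule out $\mathcal{O}_C(1) \cong K_C$. By adjunction on the Enriques surface $S$, one has $K_C = (K_S + C)|_C = K_S|_C \otimes \mathcal{O}_C(1)$, so the question reduces to showing that the $2$-torsion line bundle $K_S|_C$ is nontrivial. I would argue this via the K3 double cover $\rho \colon \widetilde{S} \to S$, which is étale of degree $2$ and corresponds to $K_S$: its restriction $\rho^{-1}(C) \to C$ is the étale double cover of $C$ determined by $K_S|_C$, and is trivial exactly when $\rho^{-1}(C)$ is disconnected. Since $C$ is very ample on $S$ (being a hyperplane section of the very ample embedding $S \subset \mathbb{P}^{p-1}$), the pullback $\rho^*C$ is ample on $\widetilde{S}$; Kodaira vanishing on the K3 surface (using $h^1(\mathcal{O}_{\widetilde{S}}) = 0$) then gives $h^0(\mathcal{O}_{\rho^*C}) = 1$, so $\rho^*C$ is connected, yielding the claim. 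By Serre duality one obtains $h^1(\mathcal{O}_C(1)) = h^0(K_S|_C) = 0$, whence Riemann--Roch gives $h^0(\mathcal{O}_C(1)) = p - 1$.

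I would then cascade upward through two restriction sequences. From $0 \to \mathcal{O}_S \to \mathcal{O}_S(1) \to \mathcal{O}_C(1) \to 0$, together with $h^1(\mathcal{O}_S) = 0$ (Enriques), I obtain $h^0(\mathcal{O}_S(1)) = 1 + (p-1) = p$; combined with the non-degeneracy of $S \subset \mathbb{P}^{p-1}$, this yields linear normality of $S$. Finally, the sequence $0 \to \mathcal{O}_W \to \mathcal{O}_W(1) \to \mathcal{O}_S(1) \to 0$ gives $h^0(\mathcal{O}_W(1)) \leq 1 + p = p+1$, while non-degeneracy of $W$ in $\mathbb{P}^p$ forces $h^0(\mathcal{O}_W(1)) \geq p+1$; equality delivers linear normality of $W$. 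Notably this final step does \emph{not} require any vanishing hypothesis on $H^1(\mathcal{O}_W)$, in contrast to the projective-normality statements earlier in this section.

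The main obstacle is the nontriviality of $K_S|_C$, which is what forces the use of the K3 double cover and Kodaira vanishing; the rest is a direct cohomology chase. A minor technical point, which I would handle in passing, is the non-degeneracy of $S \subset \mathbb{P}^{p-1}$ and of $C \subset \mathbb{P}^{p-2}$; this propagates from the non-degeneracy of $W \subset \mathbb{P}^p$ by lifting a linear form vanishing on $S$ to one on $\mathbb{P}^p$ and subtracting a multiple of the equation of the hyperplane to produce a linear form vanishing on $W$, which by non-degeneracy must be zero.
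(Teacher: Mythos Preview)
Your proof is correct, but it takes a longer route than the paper. The paper computes $h^0(\mathcal{O}_S(1)) = p$ in one line, by Riemann--Roch on the Enriques surface $S$: since $\chi(\mathcal{O}_S)=1$ and $\mathcal{O}_S(1)^2 = 2p-2$, one gets $\chi(\mathcal{O}_S(1)) = p$, and the higher cohomology vanishes by Kodaira (using that $K_S$ is numerically trivial, so $\mathcal{O}_S(1) = K_S \otimes (K_S \otimes \mathcal{O}_S(1))$ with the second factor ample). From there the paper runs exactly your final exact sequence on $W$.

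You instead descend to the curve section $C$, prove $K_S|_C \ne \mathcal{O}_C$ via connectedness of the pullback to the K3 cover, deduce $h^0(\mathcal{O}_C(1)) = p-1$, and then climb back up through $S$ using $h^1(\mathcal{O}_S)=0$. This is sound, and the K3-cover connectedness argument is a nice way to see the nontriviality of $K_S|_C$; but in effect you are reproving, by hand, the vanishing $h^1(\mathcal{O}_S(1))=0$ that the paper simply invokes. Your approach has the mild advantage of making explicit \emph{why} the vanishing holds (and of exhibiting the linear normality of $C$ as a byproduct), at the cost of an extra layer. The final step for $W$ is identical in both arguments, and your observation that no hypothesis on $h^1(\mathcal{O}_W)$ is needed here is correct and worth noting.
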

\begin{proof}
By Riemann-Roch on $S$ we obtain $h^0(\mathcal{O}_{S}(1))=p$.
From $W\subset \mathbb{P}^p$ we have that $h^0(\mathcal{O}_{W}(1))\ge p+1$. On the other hand, from the following exact sequence
$$0 \to \mathcal{O}_{W} \to \mathcal{O}_{W}(1) \to \mathcal{O}_{S}(1) \to 0$$
one gets $h^0(\mathcal{O}_{W}(1))\le p+1$
and hence equality holds.
\end{proof} 

\begin{corollary}\label{cor:projnormKnown}
The following Enriques-Fano threefolds are projectively normal:
$$W_{KLM}^9\subset \mathbb{P}^9, \quad W_F^{p=7,9,13}\subset \mathbb{P}^p, \quad W_{BS}^{p=7,8,9,10,13} \xhookrightarrow{\phi_{\mathcal{L}}} \mathbb{P}^p, \quad W_{P}^{p=13,17}\subset \mathbb{P}^p.$$
\end{corollary}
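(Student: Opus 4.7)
The plan is to reduce Corollary~\ref{cor:projnormKnown} to a single combined application of Proposition~\ref{prop:linnorm} and Theorem~\ref{thm:projnorm}, treated uniformly across the list. The reason this works is that both the linear-normality step and the projective-normality step have already been proved in the required generality; all that remains is a case-by-case verification that the abstract hypotheses apply.

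First I would verify that each $W\subset \mathbb{P}^p$ in the statement satisfies Assumption~(*). For the BS-EF 3-folds $W_{BS}^{p=7,8,9,10,13}$ this is automatic from Bayle's construction: the very ample system $\phi_{\mathcal L}$ embeds them as non-degenerate threefolds in $\mathbb{P}^p$ whose general hyperplane section is an Enriques surface, and by definition they are not generalized cones. For $W_P^{p=13,17}$ the embeddings in $\mathbb{P}^p$ and the Enriques nature of the general hyperplane section were established in Sections~\ref{subsec:pro17} and \ref{subsec:pro13}. The F-EF 3-folds $W_F^{p=7,9,13}$ satisfy Assumption~(*) by Fano's original construction. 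Finally, $W_{KLM}^9\subset \mathbb{P}^9$ was produced in Section~\ref{sec:KLM} as a non-degenerate projection of $W_F^{13}$ whose hyperplane sections correspond to the sublinear system $\mathcal{R}$ of Knutsen-Lopez-Mu\~noz, with Enriques general member.

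Next, in every case $W$ is embedded in $\mathbb{P}^p$ where $p=\frac{S^3}{2}+1$ is the genus of the polarization, which by Riemann-Roch on an Enriques hyperplane section is also the arithmetic genus of a general curve section of $W$. Hence Proposition~\ref{prop:linnorm} applies and yields linear normality of both $W\subset \mathbb{P}^p$ and $S\subset \mathbb{P}^{p-1}$.

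Finally, the values $p\in\{7,8,9,10,13,17\}$ all satisfy $N:=p\ge 7$, so the Reye-congruence exception in Theorem~\ref{thm:projnorm} never arises. That theorem then produces $h^1(\mathcal{O}_W)=0$ and the projective normality of each listed threefold in one stroke. The only potential obstacle is the case-by-case verification of Assumption~(*), especially confirming that $W_{KLM}^9$ and the rational $W_F^{p=7,9,13}$ are non-degenerate and not cones; but both facts are transparent from the explicit constructions recalled above, and all the genuine cohomological work has already been absorbed into Theorem~\ref{thm:projnorm} via \cite{GLM02} and the arguments of Conte--Murre and Epema.
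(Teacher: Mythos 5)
Your proposal is correct and follows exactly the paper's own route: the paper proves this corollary by citing Proposition~\ref{prop:linnorm} (linear normality of $W$ and of $S$, via Riemann--Roch on $S$ and the restriction sequence) followed by Theorem~\ref{thm:projnorm} (projective normality, noting that $N=p\ge 7$ in every listed case so the Reye-congruence exception is irrelevant). The preliminary verification of Assumption~(*) for each family that you spell out is likewise handled in the paper's discussion at the start of \S~\ref{subsec:normality}, so there is nothing missing.
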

\begin{proof}
See Theorem~\ref{thm:projnorm} and Proposition~\ref{prop:linnorm}.
\end{proof}

We cannot say the same for the F-EF 3-fold $W_F^{6}\subset \mathbb{P}^6$, since its hyperplane sections are Reye congruences (see \cite[Proposition 3]{Co83} and \cite[\S 3]{Fa38}).
As for the BS-EF 3-fold $W_{BS}^{6}\xhookrightarrow{\phi_{\mathcal{L}}} \mathbb{P}^6$, Macaulay2 enables us to find that its hyperplane section $S\subset \mathbb{P}^5$ is not contained in quadric hypersurfaces of $\mathbb{P}^5$ (see Code~\ref{code:bayle6} of Appendix~\ref{app:code}): this is equivalent to saying that $S\subset \mathbb{P}^{5}$ is projectively normal (use Riemann-Roch and see \cite[Theorem 1.1]{GLM02}), and so we have that
$W_{BS}^6\subset \mathbb{P}^6$ is projectively normal too (see Theorem~\ref{thm:projnorm}).
Thus, we obtain Theorem~\ref{thm:projnormKnown}.

\appendix

\section{Macaulay2 codes}\label{app:code}

In the following we will collect the input codes used in Macaulay2 for the computational analysis of this paper. 
We will essentially use the package \textit{Cremona} of Staglian\`{o} (see \cite{Sta18}).
For more information visit the website
\begin{center}
\href{http://www2.macaulay2.com/Macaulay2/doc/Macaulay2-1.12/share/doc/Macaulay2/Cremona/html/}{http://www2.macaulay2.com/Macaulay2/doc/Macaulay2-1.12/share/doc/Macaulay2/Cremona/html/}
\end{center}

\subsection{Computational analysis of the BS-EF 3-fold of genus 6}\label{code:bayle6} 
\begin{scriptsize}
\begin{verbatim}
Macaulay2, version 1.11
with packages: ConwayPolynomials, Elimination, IntegralClosure, InverseSystems,
               LLLBases, PrimaryDecomposition, ReesAlgebra, TangentCone
i1 : needsPackage "Cremona";
i2 : PP3xPP3 = ZZ/10000019[x_0,x_1,x_2,x_3]**ZZ/10000019[y_0,y_1,y_2,y_3];
i3 : X = ideal{ x_0*y_0-7*x_1*y_1+4*x_2*y_2+2*x_3*y_3, 
     x_0*y_0-6*x_1*y_1+2*x_2*y_2+3*x_3*y_3, x_0*y_0-x_1*y_1-7*x_2*y_2+7*x_3*y_3};
i4 : PP9 = ZZ/10000019[Z_0..Z_9];
i5 : phi = rationalMap map(PP3xPP3,PP9,matrix{{x_0*y_0,x_1*y_1,x_2*y_2,x_3*y_3,x_0*y_1+x_1*y_0, 
     x_0*y_2+x_2*y_0, x_0*y_3+x_3*y_0, x_1*y_2+x_2*y_1, x_1*y_3+x_3*y_1, x_2*y_3+x_3*y_2}});
i6 : (dim(image phi) -1, degree(image phi)) == (6,10)
i7 : image phi == 
     ideal{-2*Z_1*Z_5*Z_6+Z_4*Z_6*Z_7+Z_4*Z_5*Z_8-2*Z_0*Z_7*Z_8+4*Z_0*Z_1*Z_9-Z_4^2*Z_9,
     -2*Z_2*Z_4*Z_6+Z_5*Z_6*Z_7+4*Z_0*Z_2*Z_8-Z_5^2*Z_8+Z_4*Z_5*Z_9-2*Z_0*Z_7*Z_9,
     -4*Z_1*Z_2*Z_6+Z_6*Z_7^2+2*Z_2*Z_4*Z_8-Z_5*Z_7*Z_8+2*Z_1*Z_5*Z_9-Z_4*Z_7*Z_9,
     -2*Z_3*Z_4*Z_5+4*Z_0*Z_3*Z_7-Z_6^2*Z_7+Z_5*Z_6*Z_8+Z_4*Z_6*Z_9-2*Z_0*Z_8*Z_9,
     -4*Z_1*Z_3*Z_5+2*Z_3*Z_4*Z_7-Z_6*Z_7*Z_8+Z_5*Z_8^2+2*Z_1*Z_6*Z_9-Z_4*Z_8*Z_9,
     -4*Z_2*Z_3*Z_4+2*Z_3*Z_5*Z_7+2*Z_2*Z_6*Z_8-Z_6*Z_7*Z_9-Z_5*Z_8*Z_9+Z_4*Z_9^2,
     -4*Z_1*Z_2*Z_3+Z_3*Z_7^2+Z_2*Z_8^2-Z_7*Z_8*Z_9+Z_1*Z_9^2,
     -4*Z_0*Z_2*Z_3+Z_3*Z_5^2+Z_2*Z_6^2-Z_5*Z_6*Z_9+Z_0*Z_9^2,
     -4*Z_0*Z_1*Z_3+Z_3*Z_4^2+Z_1*Z_6^2-Z_4*Z_6*Z_8+Z_0*Z_8^2, 
     -4*Z_0*Z_1*Z_2+Z_2*Z_4^2+Z_1*Z_5^2-Z_4*Z_5*Z_7+Z_0*Z_7^2}
i8 : phiX = ideal{Z_2-Z_3,Z_1-Z_3,Z_0-Z_3,
     -2*Z_1*Z_5*Z_6+Z_4*Z_6*Z_7+Z_4*Z_5*Z_8-2*Z_0*Z_7*Z_8+4*Z_0*Z_1*Z_9-Z_4^2*Z_9,
     -2*Z_2*Z_4*Z_6+Z_5*Z_6*Z_7+4*Z_0*Z_2*Z_8-Z_5^2*Z_8+Z_4*Z_5*Z_9-2*Z_0*Z_7*Z_9,
     -4*Z_1*Z_2*Z_6+Z_6*Z_7^2+2*Z_2*Z_4*Z_8-Z_5*Z_7*Z_8+2*Z_1*Z_5*Z_9-Z_4*Z_7*Z_9,
     -2*Z_3*Z_4*Z_5+4*Z_0*Z_3*Z_7-Z_6^2*Z_7+Z_5*Z_6*Z_8+Z_4*Z_6*Z_9-2*Z_0*Z_8*Z_9,
     -4*Z_1*Z_3*Z_5+2*Z_3*Z_4*Z_7-Z_6*Z_7*Z_8+Z_5*Z_8^2+2*Z_1*Z_6*Z_9-Z_4*Z_8*Z_9,
     -4*Z_2*Z_3*Z_4+2*Z_3*Z_5*Z_7+2*Z_2*Z_6*Z_8-Z_6*Z_7*Z_9-Z_5*Z_8*Z_9+Z_4*Z_9^2,
     -4*Z_1*Z_2*Z_3+Z_3*Z_7^2+Z_2*Z_8^2-Z_7*Z_8*Z_9+Z_1*Z_9^2, 
     -4*Z_0*Z_2*Z_3+Z_3*Z_5^2+Z_2*Z_6^2-Z_5*Z_6*Z_9+Z_0*Z_9^2,
     -4*Z_0*Z_1*Z_3+Z_3*Z_4^2+Z_1*Z_6^2-Z_4*Z_6*Z_8+Z_0*Z_8^2, 
     -4*Z_0*Z_1*Z_2+Z_2*Z_4^2+Z_1*Z_5^2-Z_4*Z_5*Z_7+Z_0*Z_7^2};
i9 : (dim oo -1, degree oo, oo == phi(X) ) == (3, 10, true)
i10 : H6 = ideal{Z_2-Z_3,Z_1-Z_3,Z_0-Z_3};
i11 : PP6 = ZZ/10000019[w_0..w_6];
i12 : inclusion = rationalMap map(PP6,PP9,matrix{{w_0,w_0,w_0,w_0,w_1,w_2,w_3,w_4,w_5,w_6}});
i13 : image oo == H6
i14 : pigreca = phi*(rationalMap map(PP9,PP6, sub(matrix inverseMap(inclusion||H6), PP9) ))
i15 : pigreca(X) == inclusion^*(phiX)
i16 : WB6 = ideal{-2*w_0*w_2*w_3+w_1*w_3*w_4+w_1*w_2*w_5-2*w_0*w_4*w_5+4*w_0^2*w_6-w_1^2*w_6,
      -2*w_0*w_1*w_3+w_2*w_3*w_4+4*w_0^2*w_5-w_2^2*w_5+w_1*w_2*w_6-2*w_0*w_4*w_6,
      -4*w_0^2*w_3+w_3*w_4^2+2*w_0*w_1*w_5-w_2*w_4*w_5+2*w_0*w_2*w_6-w_1*w_4*w_6,
      -2*w_0*w_1*w_2+4*w_0^2*w_4-w_3^2*w_4+w_2*w_3*w_5+w_1*w_3*w_6-2*w_0*w_5*w_6,
      -4*w_0^2*w_2+2*w_0*w_1*w_4-w_3*w_4*w_5+w_2*w_5^2+2*w_0*w_3*w_6-w_1*w_5*w_6,
      -4*w_0^2*w_1+2*w_0*w_2*w_4+2*w_0*w_3*w_5-w_3*w_4*w_6-w_2*w_5*w_6+w_1*w_6^2,
      -4*w_0^3+w_0*w_4^2+w_0*w_5^2-w_4*w_5*w_6+w_0*w_6^2, 
      -4*w_0^3+w_0*w_2^2+w_0*w_3^2-w_2*w_3*w_6+w_0*w_6^2,
      -4*w_0^3+w_0*w_1^2+w_0*w_3^2-w_1*w_3*w_5+w_0*w_5^2, 
      -4*w_0^3+w_0*w_1^2+w_0*w_2^2-w_1*w_2*w_4+w_0*w_4^2};
i17 : WB6 == pigreca(X)
i18 : (dim ooo -1, degree ooo) == (3, 10)
i19 : P1 = ideal{w_1-2*w_0,w_2-2*w_0,w_3-2*w_0,w_4-2*w_0,w_5-2*w_0,w_6-2*w_0};
i20 : P2 = ideal{w_1+2*w_0,w_2+2*w_0,w_3+2*w_0,w_4-2*w_0,w_5-2*w_0,w_6-2*w_0};
i21 : P3 = ideal{w_1+2*w_0,w_2-2*w_0,w_3-2*w_0,w_4+2*w_0,w_5+2*w_0,w_6-2*w_0};
i22 : P4 = ideal{w_1-2*w_0,w_2+2*w_0,w_3+2*w_0,w_4+2*w_0,w_5+2*w_0,w_6-2*w_0};
i23 : P5 = ideal{w_1-2*w_0,w_2+2*w_0,w_3-2*w_0,w_4+2*w_0,w_5-2*w_0,w_6+2*w_0};
i24 : P6 = ideal{w_1+2*w_0,w_2-2*w_0,w_3+2*w_0,w_4+2*w_0,w_5-2*w_0,w_6+2*w_0};
i25 : P7 = ideal{w_1+2*w_0,w_2+2*w_0,w_3-2*w_0,w_4-2*w_0,w_5+2*w_0,w_6+2*w_0};
i26 : P8 = ideal{w_1-2*w_0,w_2-2*w_0,w_3+2*w_0,w_4-2*w_0,w_5+2*w_0,w_6+2*w_0};
i27 : -- let us see if the lines lij joining the points Pi and Pj
      -- are contained in the threefold WB6
      l12 = ideal{(toMap(saturate(P1*P2),1,1)).matrix};
i28 : (l12 + WB6 == l12) == true
i29 : l13 = ideal{(toMap(saturate(P1*P3),1,1)).matrix};
i30 : (l13 + WB6 == l13) == true
i31 : l14 = ideal{(toMap(saturate(P1*P4),1,1)).matrix};
i32 : (l14 + WB6 == l14) == true
i33 : l15 = ideal{(toMap(saturate(P1*P5),1,1)).matrix};
i34 : (l15 + WB6 == l15) == true
i35 : l16 = ideal{(toMap(saturate(P1*P6),1,1)).matrix};
i36 : (l16 + WB6 == l16) == true
i37 : l17 = ideal{(toMap(saturate(P1*P7),1,1)).matrix};
i38 : (l17 + WB6 == l17) == true
i39 : l18 = ideal{(toMap(saturate(P1*P8),1,1)).matrix};
i40 : (l18 + WB6 == l18) == true
i41 : -- etc...
      -- let us now change the coordinates of PP6
      -- in order to have P1 = [1:0...0]
      PP6'=ZZ/10000019[z_0..z_6];
i42 : W' = sub(WB6, {(gens PP6)_0 => (gens PP6')_0, (gens PP6)_1 =>(gens PP6')_1+2*(gens PP6')_0, 
      (gens PP6)_2 =>(gens PP6')_2+2*(gens PP6')_0, (gens PP6)_3 =>(gens PP6')_3+2*(gens PP6')_0, 
      (gens PP6)_4 =>(gens PP6')_4+2*(gens PP6')_0, (gens PP6)_5 =>(gens PP6')_5+2*(gens PP6')_0, 
      (gens PP6)_6 =>(gens PP6')_6+2*(gens PP6')_0})
i43 : W'U0 = sub(oo, {(gens PP6')_0  => 1})
i44 : ConeP1 = sub(tangentCone oo, {(gens PP6')_0 => (gens PP6)_0, 
      (gens PP6')_1 =>(gens PP6)_1-2*(gens PP6)_0, 
      (gens PP6')_2 =>(gens PP6)_2-2*(gens PP6)_0, (gens PP6')_3 =>(gens PP6)_3-2*(gens PP6)_0, 
      (gens PP6')_4 =>(gens PP6)_4-2*(gens PP6)_0, (gens PP6')_5 =>(gens PP6)_5-2*(gens PP6)_0, 
      (gens PP6')_6 =>(gens PP6)_6-2*(gens PP6)_0})
i45 : degree oo == 4
i46 : -- similarly for P2,P3,P4,P5,P6,P7,P8
      -- we observe now that WB6 is not contained in a quadric hypersurface of PP6
      rationalMap toMap(WB6,2,1)
i47 : -- let us also see that a general hyperplane section S is not 
      -- contained in a quadric hypersurface of PP5, where
      -- S = ideal{random(1,PP6)}+WB6
      -- for example:
      S = ideal{w_0-w_1+72*w_2-13*w_3+4*w_4+8*w_5+35*w_6}+WB6
i48: PP5 = ZZ/10000019[t_0..t_5]
i49 : inc = rationalMap map(PP5,PP6,matrix{{t_0-72*t_1+13*t_2-4*t_3-8*t_4-35*t_5,
            t_0,t_1,t_2,t_3,t_4,t_5}})
i50 : image oo == ideal{S_0}
i51 : inc^*S
i52 : (dim oo -1, degree oo) == (2, 10)
i53 : toMap(ooo,2,1)
\end{verbatim}
\end{scriptsize}

\subsection{Computational analysis of the BS-EF 3-fold of genus 7}\label{code:bayle7} 
\begin{scriptsize}
\begin{verbatim}
Macaulay2, version 1.11
with packages: ConwayPolynomials, Elimination, IntegralClosure, InverseSystems,
               LLLBases, PrimaryDecomposition, ReesAlgebra, TangentCone
i1 : needsPackage "Cremona";
i2 : PP1xPP1xPP1xPP1 = 
     ZZ/10000019[x_0,x_1]**ZZ/10000019[y_0,y_1]**ZZ/10000019[z_0,z_1]**ZZ/10000019[t_0,t_1];
i3 : a0001=1;
i4 : a0010=1;
i5 : a0100=1;
i6 : a1000=1;
i7 : a1110=1;
i8 : a1101=1;
i9 : a1011=1;
i10 : a0111=1;
i11 : X = ideal{a0001*x_0*y_0*z_0*t_1+a0010*x_0*y_0*z_1*t_0+a0100*x_0*y_1*z_0*t_0+
      a1000*x_1*y_0*z_0*t_0+a1110*x_1*y_1*z_1*t_0+a1101*x_1*y_1*z_0*t_1+
      a1011*x_1*y_0*z_1*t_1+a0111*x_0*y_1*z_1*t_1};
i12 : phi = rationalMap map(PP1xPP1xPP1xPP1, ZZ/10000019[w_0..w_7], 
      matrix(PP1xPP1xPP1xPP1,{{x_1*y_1*z_1*t_1, x_1*y_0*z_0*t_1, x_0*y_0*z_1*t_1, 
      x_1*y_0*z_1*t_0, x_0*y_0*z_0*t_0, x_0*y_1*z_1*t_0, 
      x_1*y_1*z_0*t_0, x_0*y_1*z_0*t_1}}));
i13 : WB7 = phi(X);
i14 : (dim oo -1, degree oo) == (3,12)
i15 : PP7 = ring WB7;
i16 : P1 = ideal{w_1, w_2, w_3, w_4, w_5, w_6, w_7};
i17 : P2 = ideal{w_0, w_2, w_3, w_4, w_5, w_6, w_7};
i18 : P3 = ideal{w_0, w_1, w_3, w_4, w_5, w_6, w_7};
i19 : P4 = ideal{w_0, w_1, w_2, w_4, w_5, w_6, w_7};
i20 : P1' = ideal{w_0, w_1, w_2, w_3, w_5, w_6, w_7};
i21 : P2' = ideal{w_0, w_1, w_2, w_3, w_4, w_6, w_7};
i22 : P3' = ideal{w_0, w_1, w_2, w_3, w_4, w_5, w_7};
i23 : P4' = ideal{w_0, w_1, w_2, w_3, w_4, w_5, w_6};
i24 : -- let us see if the lines lij joining the points Pi and Pj
      -- are contained in the threefold WB7
      l12 = ideal{(toMap(saturate(P1*P2),1,1)).matrix};
i25 : (l12 + WB7 == l12) == true
i26 : l13 = ideal{(toMap(saturate(P1*P3),1,1)).matrix};
i27 : (l13 + WB7 == l13) == true
i28 : l14 = ideal{(toMap(saturate(P1*P4),1,1)).matrix};
i29 : (l14 + WB7 == l14) == true
i30 : l11' = ideal{(toMap(saturate(P1*P1'),1,1)).matrix};
i31 : (l11' + WB7 == l11') == false
i32 : l12' = ideal{(toMap(saturate(P1*P2'),1,1)).matrix};
i33 : (l12' + WB7 == l12') == true
i34 : l13' = ideal{(toMap(saturate(P1*P3'),1,1)).matrix};
i35 : (l13' + WB7 == l13') == true
i36 : l14' = ideal{(toMap(saturate(P1*P4'),1,1)).matrix};
i37 : (l14' + WB7 == l14') == true
i38 : -- etc...      
      sub(WB7, {(gens PP7)_0=>1});
i39 : ConeP1 = tangentCone oo
i40 : degree oo == 4
i41 : sub(WB7, {(gens PP7)_1=>1});
i42 : ConeP2 = tangentCone oo
i43 : degree oo == 4
i44 : -- etc.. similarly for P3,P4,P5,P1',P2',P3',P4'\end{verbatim}
\end{scriptsize}

\subsection{Computational analysis of the BS-EF 3-fold of genus 9}\label{code:bayle9} 
\begin{scriptsize}
\begin{verbatim}
Macaulay2, version 1.11
with packages: ConwayPolynomials, Elimination, IntegralClosure, InverseSystems,
               LLLBases, PrimaryDecomposition, ReesAlgebra, TangentCone
i1 : needsPackage "Cremona";
i2 : PP5 = ZZ/10000019[x_0, x_1, x_2, y_3, y_4, y_5];
i3 : s1 = x_0^2-3*x_1^2+2*x_2^2;
i4 : s2 = 3*x_0^2-8*x_1^2+5*x_2^2;
i5 : r1 = 3*y_3^2-8*y_4^2+5*y_5^2;
i6 : r2 = y_3^2-3*y_4^2+2*y_5^2;
i7 : X = ideal{s1+r1, s2+r2};
i8 : (dim oo -1, degree oo) == (3,4)
i9 : PP11 = ZZ/10000019[Z_0..Z_11];
i10 : phi = rationalMap map(PP5, PP11, matrix(PP5,{{x_0^2, x_1^2, x_2^2, x_0*x_1, 
      x_0*x_2, x_1*x_2, y_3^2, y_4^2, y_5^2, y_3*y_4, y_3*y_5, y_4*y_5}}));
i11 : phi(X)
i12 : (dim oo -1, degree oo) == (3,16)
i13 : H9 = ideal{ooo_0, ooo_1}
i14 : phi(X) + H9 == phi(X)
i15 : PP9 = ZZ/10000019[w_0..w_9];
i16 : inclusion = rationalMap map(PP9,PP11, matrix(PP9,{{w_0+21*w_4-55*w_5+34*w_6, 
       w_0+8*w_4-21*w_5+13*w_6, w_0, w_1, w_2, w_3, w_4, w_5, w_6, w_7, w_8, w_9 }}));
i17 : image oo == H9
i18 : WB9 = inclusion^* (phi(X));
i19 : (dim oo -1, degree oo) == (3, 16)
i20 : rationalMap map(PP11,PP9, sub(matrix inverseMap(inclusion||H9), PP11))
i21 : pigreca = phi* oo
i23 : fixedPlanex = associatedPrimes (X+ideal{x_0,x_1,x_2});
i24 : fixedPlaney = associatedPrimes (X+ideal{y_3,y_4,y_5});
i25 : P1 = inclusion^* phi(fixedPlaney#0);
i26 : P4 = inclusion^* phi(fixedPlaney#1);
i27 : P2 = inclusion^* phi(fixedPlaney#2);
i28 : P3 = inclusion^* phi(fixedPlaney#3);
i29 : P1' = inclusion^* phi(fixedPlanex#0);
i30 : P4' = inclusion^* phi(fixedPlanex#1);
i31 : P2' = inclusion^* phi(fixedPlanex#2);
i32 : P3' = inclusion^* phi(fixedPlanex#3);
i33 : -- let us see if the lines lij joining the points Pi and Pj
      -- are contained in the threefold WB9
      l12 = ideal{(toMap(saturate(P1*P2),1,1)).matrix};
i34 : (l12 + WB9 == l12) == false
i35 : l13 = ideal{(toMap(saturate(P1*P3),1,1)).matrix};
i36 : (l13 + WB9 == l13) == false
i37 : l14 = ideal{(toMap(saturate(P1*P4),1,1)).matrix};
i38 : (l14 + WB9 == l14) == false
i39 : l11' = ideal{(toMap(saturate(P1*P1'),1,1)).matrix};
i40 : (l11' + WB9 == l11') == true
i41 : l12' = ideal{(toMap(saturate(P1*P2'),1,1)).matrix};
i42 : (l12' + WB9 == l12') == true
i43 : l13' = ideal{(toMap(saturate(P1*P3'),1,1)).matrix};
i44 : (l13' + WB9 == l13') == true
i45 : l14' = ideal{(toMap(saturate(P1*P4'),1,1)).matrix};
i46 : (l14' + WB9 == l14') == true
i47 : -- etc..
      proj1 = rationalMap toMap(P2,1,1);
i48 : proj2 = rationalMap toMap(proj1(P3),1,1);
i49 : proj3 = rationalMap toMap(proj2(proj1(P4)),1,1);
i50 : proj4 = rationalMap toMap(proj3(proj2(proj1(P2'))),1,1);
i51 : proj5 = rationalMap toMap(proj4(proj3(proj2(proj1(P3')))),1,1);
i52 : proj6 = rationalMap toMap(proj5(proj4(proj3(proj2(proj1(P4'))))),1,1);
i53 : proj = proj1*proj2*proj3*proj4*proj5*proj6;
i54 : proj(WB9)
i55 : PP3 = ring oo;
i56 : isBirational( proj|WB9 )
i57 : septics = rationalMap map( PP3, PP9, matrix(inverseMap( proj|WB9 )));
i58 : time image oo == WB9
i59 : comp = associatedPrimes(ideal septics)
i60 : l3' = comp#0;
i61 : l2' = comp#1;
i62 : r21 = comp#2;
i63 : r11 = comp#3;
i64 : r31 = comp#4;
i65 : l1' = comp#5;
i66 : r23 = comp#6;
i67 : r13 = comp#7;
i68 : r33 = comp#8;
i69 : r22 = comp#9;
i70 : r12 = comp#10;
i71 : r32 = comp#11;
i72 : l1 = comp#12;
i73 : l2 = comp#13;
i74 : l3 = comp#14;
i75 : -- trihedron T' :
      f1' = ideal{(gens PP3)_3};
i76 : f2' = ideal{(gens PP3)_1+(gens PP3)_3};
i77 : f3' = ideal{(gens PP3)_2+(gens PP3)_3};
i78 : f1'+f2' == l3'
i79 : f1'+f3' == l2'
i80 : f2'+f3' == l1'
i81 : v'= saturate(f1'+f2'+f3')
i82 : -- trihedron T :
      f1 = ideal{(gens PP3)_0-55*(gens PP3)_1+34*(gens PP3)_2};
i83 : f2 = ideal{(gens PP3)_0  - 21*(gens PP3)_1  +13*(gens PP3)_2};
i84 : f3 = ideal{(gens PP3)_0};
i85 : f1+f2 == l3
i86 : f1+f3 == l2
i87 : f2+f3 == l1
i88 : v = saturate(l1+l2+l3)
i89 : r11 == f1+f1'
i90 : r12 == f1+f2'
i91 : r13 == f1+f3'
i92 : r21 == f2+f1'
i93 : r22 == f2+f2'
i94 : r23 == f2+f3'
i95 : r31 == f3+f1'
i96 : r32 == f3+f2'
i97 : r33 == f3+f3'
i98 : -- general septic surface of the linear system :       
      K = septics^* ideal{random(1,PP9)};
i99 : (dim oo -1, degree oo) == (2,7)
i100 : -- K has double point along l1,l2,l3,l1',l2',l3' :
       (minors(1,jacobian(K))+l1 == l1) == true
i101 : (minors(1,jacobian(K))+l2 == l2) == true
i102 : (minors(1,jacobian(K))+l3 == l3) == true
i103 : (minors(1,jacobian(K))+l1' == l1') == true
i104 : (minors(1,jacobian(K))+l2' == l2') == true
i105 : (minors(1,jacobian(K))+l3' == l3') == true
i106 : -- K has triple point at v and v' :
       (minors(1,jacobian(jacobian(K)))+minors(1,jacobian(K))+v == v) == true
i107 : (minors(1,jacobian(jacobian(K)))+minors(1,jacobian(K))+v' == v') == true
i108 : -- remark
       septics(f1) == P2
i109 : septics(f1') == P2'
i110 : septics(f2) == P3
i111 : septics(f2') == P3'
i112 : septics(f3) == P4
i113 : septics(f3') == P4'
\end{verbatim}
\end{scriptsize}

\subsection{Computational analysis of the BS-EF 3-fold of genus 13}\label{code:bayle13} 
\begin{scriptsize}
\begin{verbatim}
Macaulay2, version 1.11
with packages: ConwayPolynomials, Elimination, IntegralClosure, InverseSystems,
               LLLBases, PrimaryDecomposition, ReesAlgebra, TangentCone
i1 : needsPackage "Cremona";
i2 : PP1x = ZZ/10000019[x_0,x_1];
i3 : PP1y = ZZ/10000019[y_0,y_1];
i4 : PP1z = ZZ/10000019[z_0,z_1];
i5 : X = PP1x ** PP1y ** PP1z;
i6 : use X;
i7 : pigreca = rationalMap map(X, ZZ/10000019[w_0..w_13], matrix{{x_0^2*y_0^2*z_0^2, 
     x_0^2*y_0^2*z_1^2, x_0^2*y_0*y_1*z_0*z_1, x_0^2*y_1^2*z_0^2, x_0^2*y_1^2*z_1^2, 
     x_0*x_1*y_0^2*z_0*z_1, x_0*x_1*y_0*y_1*z_0^2, x_0*x_1*y_0*y_1*z_1^2, 
     x_0*x_1*y_1^2*z_0*z_1, x_1^2*y_0^2*z_0^2, x_1^2*y_0^2*z_1^2, x_1^2*y_0*y_1*z_0*z_1, 
     x_1^2*y_1^2*z_0^2, x_1^2*y_1^2*z_1^2}});
i8 : WB13 = image pigreca;
i9 : (dim oo -1, degree oo) == (3, 24)
i10 : PP13 = ring WB13;
i11 : P1 = pigreca(ideal{x_1,y_0,z_0});
i12 : P2 = pigreca(ideal{x_1,y_1,z_1});
i13 : P3 = pigreca(ideal{x_0,y_1,z_0});
i14 : P4 = pigreca(ideal{x_0,y_0,z_1});
i15 : P1' = pigreca(ideal{x_0,y_1,z_1});
i16 : P2' = pigreca(ideal{x_0,y_0,z_0});
i17 : P3' = pigreca(ideal{x_1,y_0,z_1});
i18 : P4' = pigreca(ideal{x_1,y_1,z_0});
i19 : -- let us see if the lines lij joining the points Pi and Pj
      -- are contained in the threefold WB13
      l12 = ideal{(toMap(saturate(P1*P2),1,1)).matrix};
i20 : (l12 + WB13 == l12) == false
i21 : l13 = ideal{(toMap(saturate(P1*P3),1,1)).matrix};
i22 : (l13 + WB13 == l13) == false
i23 : l14 = ideal{(toMap(saturate(P1*P4),1,1)).matrix};
i24 : (l14 + WB13 == l14) == false
i25 : l11' = ideal{(toMap(saturate(P1*P1'),1,1)).matrix};
i26 : (l11' + WB13 == l11') == false
i27 : l12' = ideal{(toMap(saturate(P1*P2'),1,1)).matrix};
i28 : (l12' + WB13 == l12') == true
i29 : l13' = ideal{(toMap(saturate(P1*P3'),1,1)).matrix};
i30 : (l13' + WB13 == l13') == true
i31 : l14' = ideal{(toMap(saturate(P1*P4'),1,1)).matrix};
i32 : (l14' + WB13 == l14') == true
i33 : -- etc..
      PP3 = ZZ/10000019[t_0..t_3];
i34 : sexties = rationalMap map(PP3,PP13, matrix{{t_0*t_1^3*t_2*t_3, t_0^2*t_1^2*t_2^2,
      t_0^2*t_1^2*t_2*t_3, t_0^2*t_1^2*t_3^2, t_0^3*t_1*t_2*t_3, t_0*t_1^2*t_2^2*t_3,
      t_0*t_1^2*t_2*t_3^2, t_0^2*t_1*t_2^2*t_3, t_0^2*t_1*t_2*t_3^2, t_1^2*t_2^2*t_3^2,
      t_0*t_1*t_2^3*t_3, t_0*t_1*t_2^2*t_3^2, t_0*t_1*t_2*t_3^3, t_0^2*t_2^2*t_3^2}});
i35 : WF13 = image oo
i36 : (WF13 == WB13) == true
\end{verbatim}
\end{scriptsize}

\subsection{Computational analysis of the BS-EF 3-fold of genus 8}\label{code:bayle8} 
\begin{scriptsize}
\begin{verbatim}
Macaulay2, version 1.11
with packages: ConwayPolynomials, Elimination, IntegralClosure, InverseSystems,
               LLLBases, PrimaryDecomposition, ReesAlgebra, TangentCone
i1 : needsPackage "Cremona";
i2 : PP4 = ZZ/10000019[x_0..x_4];
i3 : Q = ideal{x_0^2-x_1^2 -x_2^2+x_3^2};
i4 : R = ideal{2*x_0^2-x_1^2-3*x_2^2+2*x_3^2};
i5 : fixedconic1 = ideal{x_2,x_3,x_4^2-R_0};
i6 : fixedconic2 = ideal{x_0,x_1,x_4^2+R_0};
i7 : four = associatedPrimes (fixedconic1+Q)
i8 : p1 = four#0;
i9 : p2 = four#1;
i10 : p1' = four#2;
i11 : p2' = four#3;
i12 : four' = associatedPrimes (fixedconic2+Q)
i13 : p3 = four'#0;
i14 : p4 = four'#1;
i15 : p3' = four'#2;
i16 : p4' = four'#3;
i17 : PP9 = ZZ/10000019[z_0..z_9];
i18 : phi = rationalMap map(PP4,PP9,matrix{{x_4^2*x_0+x_0*R_0,x_4^2*x_1+x_1*R_0,
      x_4^2*x_2-x_2*R_0,x_4^2*x_3-x_3*R_0,x_4*x_0^2,x_4*x_1^2,
      x_4*x_2^2,x_4*x_3^2,x_4*x_0*x_1,x_4*x_2*x_3}});
i19 : phiY = phi(Q);
i20 : H8 = ideal{phiY_0}
i21 : PP8 = ZZ/10000019[w_0,w_1,w_2,w_3,w_4,w_5,w_6,w_7,w_8];
i22 : inclusion = rationalMap map(PP8,PP9, 
                  matrix(PP8,{{w_0,w_1,w_2,w_3,w_4+w_5-w_6,w_4,w_5,w_6,w_7,w_8}}));
i23 : H8 == image inclusion
i24 : WB8 = inclusion^* phiY;
i25 : (dim oo -1, degree oo) == (3,14)
i26 : P1 = inclusion^* phi(p1)
i27 : P2 = inclusion^* phi(p2)
i28 : P3 = inclusion^* phi(p3)
i29 : P4 = inclusion^* phi(p4)
i30 : P1' = inclusion^* phi(p1')
i31 : P2' = inclusion^* phi(p2')
i32 : P3' = inclusion^* phi(p3')
i33 : P4' = inclusion^* phi(p4')
i34 : -- let us see if the lines lij joining the points Pi and Pj
      -- are contained in the threefold WB8
      l12 = ideal{(toMap(saturate(P1*P2),1,1)).matrix};
i35 : (l12 + WB8 == l12) == true
i36 : l13 = ideal{(toMap(saturate(P1*P3),1,1)).matrix};
i37 : (l13 + WB8 == l13) == true
i38 : l14 = ideal{(toMap(saturate(P1*P4),1,1)).matrix};
i39 : (l14 + WB8 == l14) == true
i40 : l11' = ideal{(toMap(saturate(P1*P1'),1,1)).matrix};
i41 : (l11' + WB8 == l11') == false
i42 : l12' = ideal{(toMap(saturate(P1*P2'),1,1)).matrix};
i43 : (l12' + WB8 == l12') == false
i44 : l13' = ideal{(toMap(saturate(P1*P3'),1,1)).matrix};
i45 : (l13' + WB8 == l13') == true
i46 : l14' = ideal{(toMap(saturate(P1*P4'),1,1)).matrix};
i47 : (l14' + WB8 == l14') == true
i48 : -- etc...   
      proj1 = rationalMap toMap(P1,1,1);
i49 : proj1' = rationalMap toMap(proj1(P1'),1,1);
i50 : proj2 = rationalMap toMap(proj1'(proj1(P2)),1,1);
i51 : proj3 = rationalMap toMap(proj2(proj1'(proj1(P3))),1,1);
i52 : proj3' = rationalMap toMap(proj3(proj2(proj1'(proj1(P3')))),1,1);
i53 : proj = proj1*proj1'*proj2*proj3*proj3'
i54 : isBirational(proj | WB8)
i55 : PP3 = target proj;
i56 : septies = rationalMap map( PP3, PP8, matrix(inverseMap(proj|WB8)) )
i57 : image oo == WB8
i58 : baseL = associatedPrimes ideal septies
i59 : e0= baseL#0;
i60 : l1= baseL#1;
i61 : l2= baseL#2;
i62 : s1= baseL#3;
i63 : s2= baseL#4;
i64 : l2'= baseL#5;
i65 : l1'= baseL#6;
i66 : l0= baseL#7;
i67 : r1= baseL#8;
i68 : t1= baseL#9;
i69 : r2= baseL#10;
i70 : t2= baseL#11;
i71 : C= baseL#12;
i72 : v = saturate(l1+l2+l0);
i73 : q1 = saturate(l1+r1+s1+e0+l2')
i74 : q2 = saturate(l2+r2+s2+e0+l1')
i75 : ar = saturate(r1+r2+l0)
i76 : as = saturate(s1+s2+l0)
i77 : at = saturate(t1+t2+l0)
i78 : a1 = saturate(l1+t1)
i79 : a2 = saturate(l2+t2)
i80 : b1 = saturate(r1+t1+C)
i81 : b2 = saturate(r2+t2+C)
i82 : c1 = saturate(s1+t1)
i83 : c2 = saturate(s2+t2)
i84 : q1' = saturate(l1'+t1)
i85 : q2' = saturate(l2'+t2)
i86 : -- general septic surface of the linear system :       
      N = septies^* ideal{random(1,PP8)};
i87 : (dim oo -1, degree oo) == (2, 7)
i88 : -- N is double along l0,l1,l2,l1',l2',C
      (minors(1,jacobian(N))+ l1 == l1) == true
i89 : (minors(1,jacobian(N))+ l2 == l2) == true
i90 : (minors(1,jacobian(N))+ l2' == l2') == true
i91 : (minors(1,jacobian(N))+ l1' == l1') == true
i92 : (minors(1,jacobian(N))+ l0 == l0) == true
i93 : (minors(1,jacobian(N))+ C == C) == true
i94 : -- N is triple at v
      (minors(1,jacobian(jacobian(N)))+minors(1,jacobian(N))+ v == v) == true
i95 : -- N is quadruple at q1 and q2
      (minors(1,jacobian(jacobian(jacobian(N))))+minors(1,jacobian(jacobian(N)))+
      minors(1,jacobian(N))+ q1 == q1) == true
i96 : (minors(1,jacobian(jacobian(jacobian(N))))+minors(1,jacobian(jacobian(N)))+
       minors(1,jacobian(N))+ q2 == q2) == true
\end{verbatim}
\end{scriptsize}

\subsection{Computational analysis of the BS-EF 3-fold of genus 10}\label{code:bayle10} 
\begin{scriptsize}
\begin{verbatim}
Macaulay2, version 1.11
with packages: ConwayPolynomials, Elimination, IntegralClosure, InverseSystems,
               LLLBases, PrimaryDecomposition, ReesAlgebra, TangentCone
i1 : needsPackage "Cremona";
i2 : PP2=ZZ/10000019[u_0,u_1,u_2];
i3 : PP6 = ZZ/10000019[x_0,x_1,x_2,x_3,x_4,x_5,x_6];
i4 : cubics3points = rationalMap map(PP2, PP6 , matrix{{u_1^2*u_2, 
     u_1*u_2^2, u_0^2*u_2,u_0*u_2^2, u_0^2*u_1,u_0*u_1^2, u_0*u_1*u_2}});
i5 : S6 = image cubics3points
i6 : PP1 = ZZ/10000019[y_0,y_1];
i7 : PP1xPP6= PP1 ** PP6;
i8 : pr2 = rationalMap(PP1xPP6,PP6, matrix{{x_0,x_1,x_2,x_3,x_4,x_5,x_6}});
i9 : PP10 = ZZ/10000019[w_0..w_10];
i10 : phi = rationalMap map(PP1xPP6,PP10, matrix{{y_0^2*x_6,y_0^2*x_0+y_0^2*x_2,
      y_0^2*x_1+y_0^2*x_4,y_0^2*x_3+y_0^2*x_5,y_1^2*x_6,y_1^2*x_0+y_1^2*x_2,
      y_1^2*x_1+y_1^2*x_4,y_1^2*x_3+y_1^2*x_5,y_0*y_1*x_0-y_0*y_1*x_2,
      y_1*y_0*x_1-y_1*y_0*x_4,y_1*y_0*x_3-y_1*y_0*x_5}});
i11 : PP1xS6 = pr2^* S6;
i12 : WB10 = phi(PP1xS6);
i13 : (dim WB10 -1, degree WB10) == (3,18)
i14 : ideal{WB10_0,WB10_1,2*WB10_2,WB10_3,WB10_4,2*WB10_5,WB10_6,WB10_7,WB10_8,
      2*WB10_9,WB10_10,WB10_11,WB10_12,2*WB10_13,WB10_14,2*WB10_15,2*WB10_16,
      4*WB10_17,4*WB10_18,4*WB10_19}
i15 : oo == WB10
i16 : P1 = ideal{w_0,w_1,w_2,w_3,w_5-2*w_4,w_6-2*w_4,w_7-2*w_4,w_8,w_9,w_10};
i17 : P2 = ideal{w_0,w_1,w_2,w_3,w_5-2*w_4,w_6+2*w_4,w_7+2*w_4,w_8,w_9,w_10};
i18 : P3 = ideal{w_0,w_1,w_2,w_3,w_5+2*w_4,w_6-2*w_4,w_7+2*w_4,w_8,w_9,w_10};
i19 : P4 = ideal{w_0,w_1,w_2,w_3,w_5+2*w_4,w_6+2*w_4,w_7-2*w_4,w_8,w_9,w_10};
i20 : P1' = ideal{w_1-2*w_0,w_2-2*w_0,w_3-2*w_0,w_4,w_5,w_6,w_7,w_8,w_9,w_10};
i21 : P2' = ideal{w_1-2*w_0,w_2+2*w_0,w_3+2*w_0,w_4,w_5,w_6,w_7,w_8,w_9,w_10};
i22 : P3' = ideal{w_1+2*w_0,w_2-2*w_0,w_3+2*w_0,w_4,w_5,w_6,w_7,w_8,w_9,w_10};
i23 : P4' = ideal{w_1+2*w_0,w_2+2*w_0,w_3-2*w_0,w_4,w_5,w_6,w_7,w_8,w_9,w_10};
i24 : -- let us see if the lines lij joining the points Pi and Pj
      -- are contained in the threefold WB10
      l12 = ideal{(toMap(saturate(P1*P2),1,1)).matrix};
i25 : (l12 + WB10 == l12) == true
i26 : l13 = ideal{(toMap(saturate(P1*P3),1,1)).matrix};
i27 : (l13 + WB10 == l13) == true
i28 : l14 = ideal{(toMap(saturate(P1*P4),1,1)).matrix};
i29 : (l14 + WB10 == l14) == true
i30 : l11' = ideal{(toMap(saturate(P1*P1'),1,1)).matrix};
i31 : (l11' + WB10 == l11') == true
i32 : l12' = ideal{(toMap(saturate(P1*P2'),1,1)).matrix};
i33 : (l12' + WB10 == l12') == false
i34 : l13' = ideal{(toMap(saturate(P1*P3'),1,1)).matrix};
i35 : (l13' + WB10 == l13') == false
i36 : l14' = ideal{(toMap(saturate(P1*P4'),1,1)).matrix};
i37 : (l14' + WB10 == l14') == false
i38 : -- etc...
      proj1 = rationalMap toMap(P1,1,1);
i39 : proj2 = rationalMap toMap(proj1(P2),1,1);
i40 : proj3 = rationalMap toMap(proj2(proj1(P3)),1,1);
i41 : proj4 = rationalMap toMap(proj3(proj2(proj1(P4))),1,1);
i42 : proj1' = rationalMap toMap(proj4(proj3(proj2(proj1(P1')))),1,1);
i43 : proj2' = rationalMap toMap(proj1'(proj4(proj3(proj2(proj1(P2'))))),1,1);
i44 : proj3' = rationalMap toMap(proj2'(proj1'(proj4(proj3(proj2(proj1(P3')))))),1,1);
i45 : proj = proj1*proj2*proj3*proj4*proj1'*proj2'*proj3'
i46 : isBirational(proj | WB10)
i47 : PP3 = target proj;
i48 : sexties = rationalMap map( PP3, PP10, matrix(inverseMap(proj|WB10)) ) 
i49 : image oo == WB10
i50 : baseL = associatedPrimes ideal sexties
i51 : l23 = baseL#0
i52 : r1 = baseL#1
i53 : l12 = baseL#2
i54 : r3 = baseL#3
i55 : l13 = baseL#4
i56 : r2 = baseL#5
i57 : l02 = baseL#6
i58 : l03 = baseL#7
i59 : l01 = baseL#8
i60 : v1 = baseL#9
i61 : v2 = baseL#10
i62 : v3 = baseL#11
i63 : f0 =ideal{(gens PP3)_0};
i64 : f1 =ideal{(gens PP3)_1+(gens PP3)_2+(gens PP3)_3};
i65 : f2=ideal{(gens PP3)_1-(gens PP3)_2+(gens PP3)_3};
i66 : f3 =ideal{(gens PP3)_1+(gens PP3)_2-(gens PP3)_3};
i67 : plane = ideal{(gens PP3)_1-(gens PP3)_2-(gens PP3)_3};
i68 : l12 == f1+f2
i69 : l13 == f1+f3
i70 : l23 == f2+f3
i71 : l01 == f0+f1
i72 : l02 == f0+f2
i73 : l03 == f0+f3
i74 : r1 == plane+f1
i75 : r2 == plane+f2
i76 : r3 == plane+f3
i77 : v0 = f1+f2+f3+plane
i78 : v1 == f0+f2+f3
i79 : v2 == f0+f1+f3
i80 : v3 == f0+f1+f2
i81 : q1 = saturate(l01+r1)
i82 : q2 = saturate(l02+r2)
i83 : q3 = saturate(l03+r3)
i84 : -- general element of the linear system defining sexties :
       M = sexties^* ideal{random(1,PP10)};
i85 : (dim oo -1, degree oo)
i86 : -- M has double points along r1,r2,r3 :     
      (minors(1,jacobian(M))+r1 == r1) == true
i87 : (minors(1,jacobian(M))+r2 == r2) == true
i88 : (minors(1,jacobian(M))+r3 == r3) == true
i89 : -- M has triple points at v1,v2,v3  : 
      (minors(1,jacobian(jacobian(M)))+minors(1,jacobian(M))+ v1 == v1) == true
i90 : (minors(1,jacobian(jacobian(M)))+minors(1,jacobian(M))+ v2 == v2) == true 
i91 : (minors(1,jacobian(jacobian(M)))+minors(1,jacobian(M))+ v3 == v3) == true 
i92 : -- v0 is a quadruple point of M :
      (minors(1,jacobian(jacobian(jacobian(M))))+minors(1,jacobian(jacobian(M)))+
       minors(1,jacobian(M))+ v0 == v0) == true
\end{verbatim}
\end{scriptsize}

\subsection{Computational analysis of the P-EF 3-fold of genus 17}\label{code:pro17}

\begin{scriptsize}
\begin{verbatim}
Macaulay2, version 1.11
with packages: ConwayPolynomials, Elimination, IntegralClosure, InverseSystems,
               LLLBases, PrimaryDecomposition, ReesAlgebra, TangentCone
i1 : needsPackage "Points";
i2 : needsPackage "Cremona";
i3 : PP1 = ZZ/10000019[u_0,u_1];
i4 : PP1'= ZZ/10000019[v_0,v_1];
i5 : P1P1 = PP1 ** PP1';
i6 : PP9 = ZZ/10000019[y_{0,0},y_{0,1},y_{0,2},y_{1,0},y_{1,1},
                       y_{1,2},y_{2,0},y_{2,1},y_{2,2},x];
i7 : antiCanonicalEmbeddingP = rationalMap map(P1P1,PP9, matrix{{u_1^2*v_1^2,
     u_1^2*v_0*v_1, u_1^2*v_0^2,u_1*u_0*v_1^2,u_1*u_0*v_0*v_1,u_1*u_0*v_0^2,
     u_0^2*v_1^2,u_0^2*v_0*v_1,u_0^2*v_0^2,0}});
i8 : P = image oo
i9 : (dim P -1, degree P) == (2,8)
i10 : numgens P
i11 : V = ideal{P_1,P_2,P_3,P_4,P_5,P_6,P_7,P_8,P_9,P_10,P_11,P_12,
                P_13,P_14,P_15,P_16,P_17,P_18,P_19,P_20}
i12 : (dim V -1, degree V) == (3, 8)
i13 : PP29 = ZZ/10000019[Z_0..Z_29];
i14 : phi = rationalMap map(PP9, PP29, matrix(PP9, {{y_{1,1}^2, y_{0,0}^2, y_{0,2}^2, 
      y_{2,0}^2, y_{2,2}^2, x^2, y_{0,1}^2, y_{1,0}^2, y_{1,2}^2, y_{2,1}^2, y_{0,1}*x, 
      y_{1,0}*x, y_{1,2}*x, y_{2,1}*x, y_{0,0}*y_{1,1}, y_{0,2}*y_{1,1}, y_{2,0}*y_{1,1}, 
      y_{2,2}*y_{1,1}, y_{0,1}*y_{1,0}, y_{0,1}*y_{1,2}, y_{1,0}*y_{2,1}, y_{1,2}*y_{2,1}, 
      y_{0,0}*y_{0,2}, y_{0,0}*y_{2,0}, y_{0,2}*y_{2,2}, y_{2,0}*y_{2,2}, y_{0,1}*y_{2,1}, 
      y_{0,0}*y_{2,2}, y_{0,2}*y_{2,0}, y_{1,0}*y_{1,2} }}));
i15 : phi(V)
i16 : H17 = ideal{Z_18-Z_14, Z_19-Z_15, Z_20-Z_16, Z_21-Z_17, Z_22-Z_6, Z_23-Z_7, 
      Z_24-Z_8, Z_25-Z_9, Z_26-Z_0, Z_27-Z_0, Z_28-Z_0, Z_29-Z_0};
i17 : phi(V) + H17 == phi(V)
i18 : PP17=ZZ/10000019[z_0..z_17];
i19 : inclusion = rationalMap map(PP17, PP29, matrix(PP17, {{z_0,z_1,z_2,z_3,z_4,z_5,z_6,z_7,
      z_8,z_9,z_10,z_11,z_12,z_13,z_14,z_15,z_16,z_17, z_14,z_15,z_16,z_17,z_6,z_7,z_8,
      z_9,z_0,z_0,z_0,z_0 }}));
i20 : image oo == H17
i21 : WP17 = inclusion^* (phi(V))
i22 : (dim oo -1, degree oo) == (3, 32)
i23 : pigreca = rationalMap map(PP9,PP17, matrix(PP9, {{y_{1,1}^2, y_{0,0}^2, y_{0,2}^2, 
      y_{2,0}^2, y_{2,2}^2, x^2, y_{0,1}^2, y_{1,0}^2, y_{1,2}^2, y_{2,1}^2, y_{0,1}*x, 
      y_{1,0}*x, y_{1,2}*x, y_{2,1}*x, y_{0,0}*y_{1,1}, y_{0,2}*y_{1,1}, 
      y_{2,0}*y_{1,1}, y_{2,2}*y_{1,1} }}));
i24 : pigreca(V) == WP17
i25 : sub(WP17, {(gens PP17)_1=>1});
i26 : ConeP1 = tangentCone oo
i27 : degree oo == 4
i28 : sub(WP17, {(gens PP17)_2=>1});
i29 : ConeP2 = tangentCone oo
i30 : degree oo == 4
i31 : sub(WP17, {(gens PP17)_3=>1});
i32 : ConeP3 = tangentCone oo
i33 : degree oo == 4
i34 : sub(WP17, {(gens PP17)_4=>1});
i35 : ConeP4 = tangentCone oo
i36 : degree oo == 4
i37 : sub(WP17, {(gens PP17)_5=>1});
i38 : ConeP5 = tangentCone oo
i39 : degree oo == 6
i40 : M6 = ConeP5+ideal{(gens PP17)_5}
i41 : time irredCompM6 = associatedPrimes M6;
i42 : plane1 = irredCompM6#0
i43 : plane2 = irredCompM6#1
i44 : plane1' = irredCompM6#2
i45 : plane2' = irredCompM6#3
i46 : Q = irredCompM6#4
i47 : line1 = Q+plane1;
i48 : line1' = Q+plane1';
i49 : line2 = Q+plane2;
i50 : line2' = Q+plane2';
i51 : (dim(line1+line1')-1) == -1
i52 : (dim(line2+line2')-1) == -1
i53 : q12 = saturate(line1+line2)
i54 : q12' = saturate(line1+line2')
i55 : q1'2 = saturate(line1'+line2)
i56 : q1'2' = saturate(line1'+line2')
\end{verbatim}
\end{scriptsize}

\subsection{Computational analysis of the P-EF 3-fold of genus 13}\label{code:pro13}
\begin{scriptsize}
\begin{verbatim}
Macaulay2, version 1.11
with packages: ConwayPolynomials, Elimination, IntegralClosure, InverseSystems,
               LLLBases, PrimaryDecomposition, ReesAlgebra, TangentCone
i1 : needsPackage "Cremona";
i2 : PP2=ZZ/10000019[u_0,u_1,u_2];
i3 : a1 = ideal{u_1,u_2};
i4 : a2 = ideal{u_0,u_2};
i5 : a3 = ideal{u_0,u_1};
i6 : cubics3points = rationalMap toMap(saturate(a1*a2*a3),3,1);
i7 : DelPezzo6ic = image cubics3points
i8 : (dim DelPezzo6ic -1, degree DelPezzo6ic)
i9 : PP6 = ring DelPezzo6ic;
i10 : PP7 = ZZ/10000019[x_0,x_1,x_2,x_3,x_4,x_5,x_6,y];
i11 : inclusion = rationalMap map(PP6,PP7, matrix{{(gens PP6)_0,(gens PP6)_1,
      (gens PP6)_2,(gens PP6)_3,(gens PP6)_4,(gens PP6)_5,(gens PP6)_6,0}});
i12 : S6 = inclusion(DelPezzo6ic)
i13 : numgens S6 == 10
i14 : V = ideal{S6_1,S6_2,S6_3,S6_4,S6_5,S6_6,S6_7,S6_8,S6_9};
i15 : (dim V -1, degree V) == (3, 6)
i16 : F1 = ideal{(gens PP7)_0+(gens PP7)_2, (gens PP7)_1+(gens PP7)_4,
      (gens PP7)_3+(gens PP7)_5, (gens PP7)_6};
i17 : v = (associatedPrimes (F1+V))#0 
i18 : oo == ideal{x_0,x_1,x_2,x_3,x_4,x_5,x_6}      
i19 : F2 = ideal{(gens PP7)_0-(gens PP7)_2, (gens PP7)_1-(gens PP7)_4, 
           (gens PP7)_3-(gens PP7)_5, y};
i20 : F2intV = associatedPrimes saturate(F2+V);
i21 : v1 = F2intV#0
i22 : v2 = F2intV#3
i23 : v3 = F2intV#2
i24 : v4 = F2intV#1
i25 : PP13 = ZZ/10000019[z_0..z_13];
i26 : pigreco = rationalMap map(PP7,PP13, matrix{{x_6^2, x_0^2+x_2^2, x_1^2+x_4^2, x_3^2+x_5^2,    
      (x_0+x_2)*x_6, (x_1+x_4)*x_6, (x_3+x_5)*x_6, x_0*x_1+x_2*x_4, x_2*x_3+x_0*x_5,
      x_1*x_3+x_4*x_5, (x_0-x_2)*y, (x_1-x_4)*y, (x_3-x_5)*y, y^2}});
i27 : PP19 = ZZ/10000019[Z_0..Z_19];
i28 : phi = rationalMap map(PP7,PP19,matrix{{x_6^2, x_0^2+x_2^2, x_1^2+x_4^2,
      x_3^2+x_5^2, (x_0+x_2)*x_6, (x_1+x_4)*x_6, (x_3+x_5)*x_6, x_0*x_1+x_2*x_4, 
      x_2*x_3+x_0*x_5, x_1*x_3+x_4*x_5, (x_0-x_2)*y, (x_1-x_4)*y, (x_3-x_5)*y, y^2, 
      2*x_0*x_2, 2*x_1*x_4, 2*x_3*x_5, x_4*x_3+x_1*x_5, x_0*x_3+x_2*x_5, x_1*x_2+x_0*x_4}});
i29 : phi(V)
i30 : phiV = sub(phi(V), {Z_14 => 2*Z_0,Z_15 => 2*Z_0,Z_16 => 2*Z_0, Z_19 => Z_6,
      Z_18 => Z_5, Z_17 => Z_4})
i31 : PP13' = ZZ/10000019[Z_0..Z_13];
i32 : ideal(submatrix(gens (sub(ooo, PP13')), {6..47}))
i33 : WP13 = sub(oo, { (gens PP13')_0=>(gens PP13)_0, (gens PP13')_1=>(gens PP13)_1, 
      (gens PP13')_2=>(gens PP13)_2, (gens PP13')_3=>(gens PP13)_3, 
      (gens PP13')_4=>(gens PP13)_4, (gens PP13')_5=>(gens PP13)_5, 
      (gens PP13')_6=>(gens PP13)_6, (gens PP13')_7=>(gens PP13)_7,
      (gens PP13')_8=>(gens PP13)_8, (gens PP13')_9=>(gens PP13)_9, 
      (gens PP13')_10=>(gens PP13)_10, (gens PP13')_11=>(gens PP13)_11, 
      (gens PP13')_12=>(gens PP13)_12, (gens PP13')_13=>(gens PP13)_13 })
i34 : (dim oo -1, degree oo) == (3, 24)
i35 : WP13 == pigreco(V)
i36 : P1 = ideal{z_1 -2*z_0,z_2 -2*z_0,z_3 -2*z_0,z_4 -2*z_0,z_5 -2*z_0,
       z_6 -2*z_0,z_7 -2*z_0,z_8 -2*z_0,z_9 -2*z_0,z_10,z_11,z_12,z_13};
i37 : P2 = ideal{z_1 -2*z_0,z_2 -2*z_0,z_3 -2*z_0,z_4 -2*z_0,z_5 +2*z_0,
      z_6 +2*z_0,z_7 +2*z_0,z_8 +2*z_0,z_9 -2*z_0,z_10,z_11,z_12,z_13};
i38 : P3 = ideal{z_1 -2*z_0,z_2 -2*z_0,z_3 -2*z_0,z_4 +2*z_0,z_5 -2*z_0,
      z_6 +2*z_0,z_7 +2*z_0,z_8 -2*z_0,z_9 +2*z_0,z_10,z_11,z_12,z_13};
i39 : P4 = ideal{z_1 -2*z_0,z_2 -2*z_0,z_3 -2*z_0,z_4 +2*z_0,z_5 +2*z_0,
      z_6 -2*z_0,z_7 -2*z_0,z_8 +2*z_0,z_9 +2*z_0,z_10,z_11,z_12,z_13};
i40 : P5 = pigreco(v);
i41 : -- let us see if the lines lij joining the points Pi and Pj
      -- are contained in the threefold WP13
      l12 = ideal{(toMap(saturate(P1*P2),1,1)).matrix};
i42 : (l12 + WP13 == l12 ) == false
i43 : l13 = ideal{(toMap(saturate(P1*P3),1,1)).matrix};
i44 : (l13 + WP13 == l13) == false
i45 : l14 = ideal{(toMap(saturate(P1*P4),1,1)).matrix};
i46 : (l14 + WP13 == l14 ) == false
i47 : l15 = ideal{(toMap(saturate(P1*P5),1,1)).matrix};
i48 : (l15 + WP13 == l15) == true
i49 : l23 = ideal{(toMap(saturate(P2*P3),1,1)).matrix};
i50 : (l23 + WP13 == l23) == false
i51 : l24 = ideal{(toMap(saturate(P2*P4),1,1)).matrix};
i52 : (l24 + WP13 == l24) == false
i53 : l25 = ideal{(toMap(saturate(P2*P5),1,1)).matrix};
i54 : (l25 + WP13 == l25) == true
i55 : l34 = ideal{(toMap(saturate(P3*P4),1,1)).matrix};
i56 : (l34 + WP13 == l34) == false
i57 : l35 = ideal{(toMap(saturate(P3*P5),1,1)).matrix};
i58 : (l35 + WP13 == l35) == true
i59 : l45 = ideal{(toMap(saturate(P4*P5),1,1)).matrix};
i60 : (l45 + WP13 == l45) == true
i61 : W' = sub(WP13, {(gens PP13)_0=>(gens PP13')_0,
      (gens PP13)_1=>(gens PP13')_1+2*(gens PP13')_0,
      (gens PP13)_2=>(gens PP13')_2+2*(gens PP13')_0,
      (gens PP13)_3=>(gens PP13')_3+2*(gens PP13')_0,
      (gens PP13)_4=>(gens PP13')_4+2*(gens PP13')_0,
      (gens PP13)_5=>(gens PP13')_5+2*(gens PP13')_0, 
      (gens PP13)_6=>(gens PP13')_6+2*(gens PP13')_0,
      (gens PP13)_7=>(gens PP13')_7+2*(gens PP13')_0, 
      (gens PP13)_8=>(gens PP13')_8+2*(gens PP13')_0,
      (gens PP13)_9=>(gens PP13')_9+2*(gens PP13')_0, 
      (gens PP13)_10=>(gens PP13')_10, (gens PP13)_11=>(gens PP13')_11, 
      (gens PP13)_12=>(gens PP13')_12, (gens PP13)_13=>(gens PP13')_13});
i62 : W'U0 = sub(oo, {(gens PP13')_0  => 1});
i63 : tangentCone W'U0 == ideal{-9*Z_1+8*Z_7+8*Z_8-4*Z_9, -9*Z_2+8*Z_7-4*Z_8+8*Z_9,
      -9*Z_3-4*Z_7+8*Z_8+8*Z_9, -9*Z_4+2*Z_7+2*Z_8-Z_9, -9*Z_5+2*Z_7-Z_8+2*Z_9, 
      -9*Z_6-Z_7+2*Z_8+2*Z_9, Z_10-Z_11+Z_12, 9*Z_12^2-(-4*Z_7+8*Z_8+8*Z_9)*Z_13, 
      9*Z_11^2-(8*Z_7-4*Z_8+8*Z_9)*Z_13, 9*Z_11*Z_12+(2*Z_7+2*Z_8-10*Z_9)*Z_13, 
      (2*Z_7-10*Z_8+2*Z_9)*Z_11+(-10*Z_7+2*Z_8+2*Z_9)*Z_12, 
      (6*Z_7-6*Z_8-18*Z_9)*Z_11+(6*Z_7-6*Z_8+18*Z_9)*Z_12, 
      Z_7^2-2*Z_7*Z_8+Z_8^2-2*Z_7*Z_9-2*Z_8*Z_9+Z_9^2}
i64 : degree (tangentCone W'U0) == 4 
i65 : sub(WP13, {(gens PP13)_13=>1});
i66 : ConeP5 = tangentCone oo
i67 : degree oo == 5
i68 : TC0W'U13 = ideal{ z_6-z_7, z_5-z_8, z_4-z_9, z_2-z_3, z_1-z_3, 2*z_0-z_3,
      z_9*z_10-z_8*z_11+z_7*z_12, z_8*z_10-z_9*z_11+z_3*z_12,
      z_7*z_10-z_3*z_11+z_9*z_12, z_3*z_10-z_7*z_11+z_8*z_12,
      z_8^2-z_9^2, z_7*z_8-z_3*z_9, z_3*z_8-z_7*z_9,
      z_7^2-z_9^2, z_3*z_7-z_8*z_9, z_3^2-z_9^2 }
i69 : (ConeP5 == oo ) == true
i70 : M5 = ConeP5+ideal{(gens PP13)_13}
i71 : (dim oo -1, degree oo) == (2, 5)
i72 : irredCompM5 = associatedPrimes M5;
i73 : plane0=irredCompM5#0
i74 : plane1=irredCompM5#1
i75 : plane2=irredCompM5#2
i76 : plane3=irredCompM5#3
i77 : plane4=irredCompM5#4
i78 : (dim(plane0+plane1)-1, degree (plane0+plane1)) == (1,1)
i79 : (dim(plane0+plane2)-1, degree (plane0+plane2)) == (1,1)
i80 : (dim(plane0+plane3)-1, degree (plane0+plane3)) == (1,1)
i81 : (dim(plane0+plane4)-1, degree (plane0+plane4)) == (1,1)
i82 : (dim(plane1+plane2)-1, degree (plane1+plane2)) == (0,1)
i83 : (dim(plane1+plane3)-1, degree (plane1+plane3)) == (0,1)
i84 : (dim(plane1+plane4)-1, degree (plane1+plane4)) == (0,1)
i85 : (dim(plane2+plane3)-1, degree (plane2+plane3)) == (0,1)
i86 : (dim(plane2+plane4)-1, degree (plane2+plane4)) == (0,1)
i87 : (dim(plane3+plane4)-1, degree (plane3+plane4)) == (0,1)
\end{verbatim}
\end{scriptsize}

\subsection{Computational analysis of the KLM-EF 3-fold of genus 9}\label{code:KLM}
\begin{scriptsize}
\begin{verbatim}
Macaulay2, version 1.11
with packages: ConwayPolynomials, Elimination, IntegralClosure, InverseSystems,
               LLLBases, PrimaryDecomposition, ReesAlgebra, TangentCone
i1 : needsPackage "Cremona";
i2 : PP3 = ZZ/10000019[t_0..t_3];
i3 : PP13 = ZZ/10000019[w_0..w_13];
i4 : sexties = rationalMap map(PP3,PP13, matrix{{t_0*t_1^3*t_2*t_3, t_0^2*t_1^2*t_2^2,
     t_0^2*t_1^2*t_2*t_3, t_0^2*t_1^2*t_3^2, t_0^3*t_1*t_2*t_3, t_0*t_1^2*t_2^2*t_3,
     t_0*t_1^2*t_2*t_3^2, t_0^2*t_1*t_2^2*t_3, t_0^2*t_1*t_2*t_3^2, t_1^2*t_2^2*t_3^2,
     t_0*t_1*t_2^3*t_3, t_0*t_1*t_2^2*t_3^2, t_0*t_1*t_2*t_3^3, t_0^2*t_2^2*t_3^2}});
i5 : WF13 = image sexties
i6 : (dim WF13 -1, degree WF13) == (3, 24)
i7 : P1 = ideal{w_0,w_1,w_2,w_3,w_5,w_6,w_7,w_8,w_9,w_10,w_11,w_12,w_13};
i8 : tangentCone sub(WF13, {(gens PP13)_4=>1})
i9 : degree oo == 4
i10 : P2 = ideal{w_1,w_2,w_3,w_4,w_5,w_6,w_7,w_8,w_9,w_10,w_11,w_12,w_13};
i11 : tangentCone sub(WF13, {(gens PP13)_0=>1})
i12 : degree oo == 4
i13 : P3 = ideal{w_0,w_1,w_2,w_3,w_4,w_5,w_6,w_7,w_8,w_9,w_11,w_12,w_13};
i14 : tangentCone sub(WF13, {(gens PP13)_10=>1})
i15 : degree oo == 4
i16 : P4 = ideal{w_0,w_1,w_2,w_3,w_4,w_5,w_6,w_7,w_8,w_9,w_10,w_11,w_13};
i17 : tangentCone sub(WF13, {(gens PP13)_12=>1})
i18 : degree oo == 4
i19 : P1' = ideal{w_0,w_1,w_2,w_3,w_5,w_4,w_6,w_7,w_8,w_10,w_11,w_12,w_13};
i20 : tangentCone sub(WF13, {(gens PP13)_9=>1})
i21 : degree oo == 4
i22 : P2' = ideal{w_0,w_1,w_2,w_3,w_4,w_5,w_6,w_7,w_8,w_9,w_10,w_11,w_12};
i23 : tangentCone sub(WF13, {(gens PP13)_13=>1})
i24 : degree oo == 4
i25 : P3' = ideal{w_0,w_1,w_2,w_4,w_5,w_6,w_7,w_8,w_9,w_10,w_11,w_12,w_13};
i26 : tangentCone sub(WF13, {(gens PP13)_3=>1})
i27 : degree oo == 4
i28 : P4' = ideal{w_0,w_2,w_3,w_4,w_5,w_6,w_7,w_8,w_9,w_10,w_11,w_12,w_13};
i29 : tangentCone sub(WF13, {(gens PP13)_1=>1})
i30 : degree oo == 4
i31 : J = jacobian((map sexties).matrix);
i32 : JJ = jacobian(J);
i33 : JJl23 = sub(JJ,{(gens PP3)_2=> 0, (gens PP3)_3 =>0})
i34 : SPANnuF23 = ideal{w_5,w_6,w_7,w_8,w_9,w_10,w_11,w_12,w_13};
i35 : -- H12 = ideal{random(1,PP13)};
      -- for example
      H12 = ideal{w_0+11*w_1+2*w_2+3*w_3+5*w_4+4*w_5+6*w_6-7*w_7-8*w_8-9*w_9+
      10*w_10-11*w_11+12*w_12+13*w_13};
i36 : S = H12+WF13;
i37 : E3 = saturate(S+SPANnuF23)
i38 : (dim oo -1, degree oo, genus oo) == (1, 4, 1)
i39 : SPANE3 = ideal{E3_0,E3_1,E3_2,E3_3,E3_4,E3_5,E3_6,E3_7,E3_8,E3_9};
i40 : PP9 = ZZ/10000019[z_0..z_9]; 
i41 : projE3 = rationalMap map(PP13,PP9, matrix{{SPANE3_9,SPANE3_8,SPANE3_7,SPANE3_6,
      SPANE3_5,SPANE3_4,SPANE3_3,SPANE3_2,SPANE3_1,SPANE3_0}})
i42 : KLM = projE3(WF13)
i43 : (dim oo -1, degree oo) == (3, 16)
i44 : isBirational((projE3|WF13)||KLM) == true
i45 : sub(KLM, {(gens PP9)_5=>1});
i46 : Conep1 = tangentCone oo
i47 : degree oo
i48 : sub(KLM, {(gens PP9)_9=>1});
i49 : Conep2 = tangentCone oo
i50 : degree oo
i51 : sub(KLM, {(gens PP9)_6=>1});
i52 : Conep3 = tangentCone oo
i53 : degree oo
i54 : sub(KLM, {(gens PP9)_8=>1});
i55 : Conep4 = tangentCone oo
i56 : degree oo
i57 : sub(KLM, {(gens PP9)_0=>1});
i58 : Conep5 = tangentCone oo
i59 : degree oo
i60 : M6 = Conep5+ideal{(gens PP9)_0}
i61 : irredCompM6 = associatedPrimes M6;
i62 : plane1 = irredCompM6#0
i63 : plane2 = irredCompM6#1
i64 : plane2' = irredCompM6#2
i65 : plane1' = irredCompM6#3
i66 : Q = irredCompM6#4
i67 : line1 = Q+plane1;
i68 : line1' = Q+plane1';
i69 : line2 = Q+plane2;
i70 : line2' = Q+plane2';
i71 : dim(line1+line1')-1 == -1
i72 : dim(line2+line2')-1 == -1
i73 : q12 = saturate(line1+line2)
i74 : q12' = saturate(line1+line2')
i75 : q1'2 = saturate(line1'+line2)
i76 : q1'2' = saturate(line1'+line2')
\end{verbatim}
\end{scriptsize}

\newpage

\section{Configurations of the singularities of some Enriques-Fano threefolds of genus 6, 7, 8, 9, 10, 13, 17}\label{app:congifuration}

In this Appendix we graphically represent the configurations of the singular points of the F-EF 3-folds $W_F^{p=6,7,9,13}$, the BS-EF 3-folds $W_{BS}^{p=6,7,8,9,10,13}$, the P-EF 3-folds $W_P^{p=13,17}$ and the KLM-EF 3-fold $W_{KLM}^9$.

\begin{table}[ht]\scriptsize
\begin{tabular}{|l|l|} 
\hline
  & \\ 
$W_{BS}^{6}$, $W_{F}^{6}$ & $W_{BS}^{7}$, $W_{F}^{7}$\\ 
\includegraphics[scale=0.35]{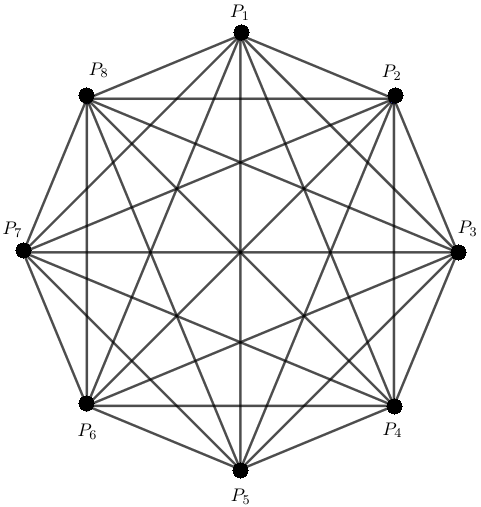} & \includegraphics[scale=0.35]{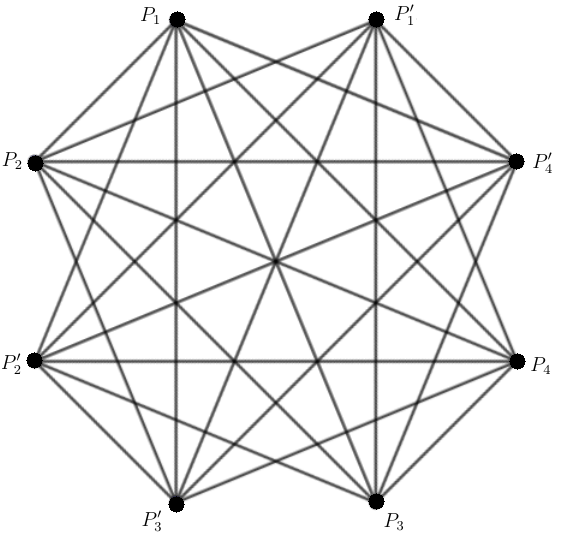}  \\
\hline
\hline
  & \\ 
$W_{BS}^{9}$, $W_{F}^{9}$ & $W_{BS}^{13}$, $W_{F}^{13}$ \\ 
\includegraphics[scale=0.35]{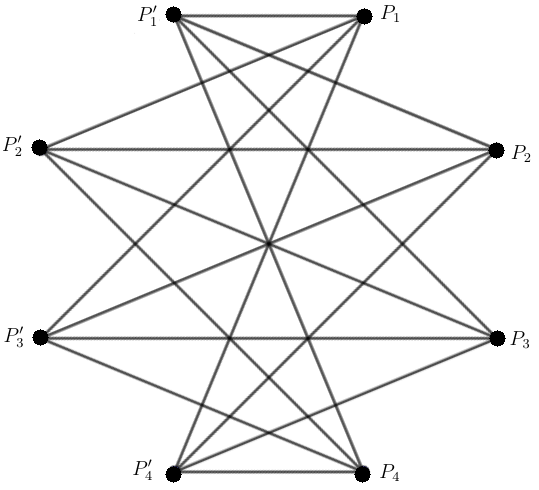} & \includegraphics[scale=0.45]{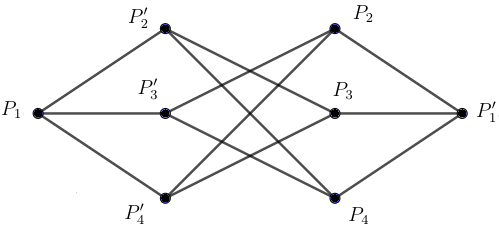}\\
\hline
\end{tabular} 
\caption{\label{tab:BS67913}\scriptsize{Configurations of the eight quadruple points of the Enriques-Fano threefolds $W_{F}^{p}\subset \mathbb{P}^p$ and $W_{BS}^{p} \xhookrightarrow{\phi_{\mathcal{L}}} \mathbb{P}^p$, for $p=6,7,9,13$.}}
\end{table}

\begin{table}[ht]\scriptsize
\begin{tabular}{|l|l|} 
\hline
  & \\ 
$W_{BS}^{8}$ & $W_{BS}^{10}$\\ 
\includegraphics[scale=0.45]{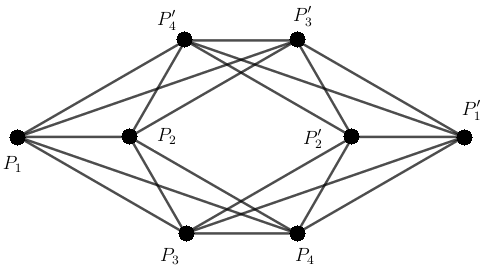} & \includegraphics[scale=0.45]{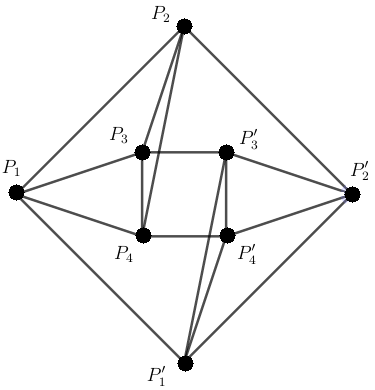}\\
\hline
\end{tabular} 
\caption{\label{tab:BS810}\scriptsize{Configurations of the eight quadruple points of the Enriques-Fano threefolds $W_{BS}^{p} \xhookrightarrow{\phi_{\mathcal{L}}} \mathbb{P}^p$, for $p=8,10$.}}
\end{table}

\begin{table}[ht]\scriptsize
\begin{tabular}{|l|l|} 
\hline
  & \\ 
$W_{P}^{13}$, $W_{P}^{17}$,  & $W_{KLM}^{9}$\\ 
\includegraphics[scale=0.45]{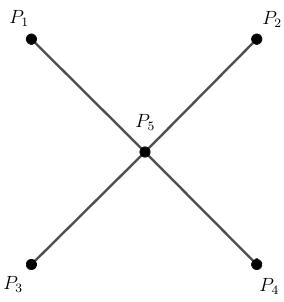} & \includegraphics[scale=0.45]{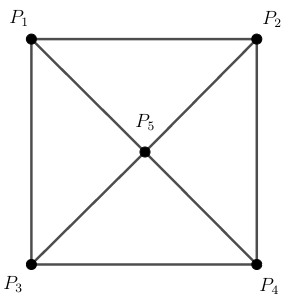}\\
\hline
\end{tabular} 
\caption{\label{tab:proKLM}\scriptsize{Configurations of the five singular points of the Enriques-Fano threefolds $W_{P}^{13}$, $W_{P}^{17}$ and $W_{KLM}^{9}$.}}
\end{table}

\newpage

\end{document}